\numberwithin{equation}{section}
\def\hangbox to #1 #2{\vskip3pt\hangindent #1\noindent \hbox to #1{#2}$\!\!$}
\newtheorem{thm}{Theorem}[section]
\newtheorem{lem}[thm]{Lemma}
\newtheorem{cor}[thm]{Corollary}
\theoremstyle{definition}
\newtheorem{defn}[thm]{Definition}
\theoremstyle{remark}
\def\N{{\mathbb N}}
\def\R{{\mathbb R}}
\def\E{{\mathbb E}}
\def\sfrac#1#2{\kern.1em\raise.5ex\hbox{$#1$}
        \kern-.1em/\kern-.05em\lower.25ex\hbox{$#2$}}
\def\vp{\varepsilon}
\newcommand{\pushright}[1]{\ifmeasuring@#1\else\omit\hfill$\displaystyle#1$\fi\ignorespaces}
\newcommand{\pushleft}[1]{\ifmeasuring@#1\else\omit$\displaystyle#1$\hfill\fi\ignorespaces}
\newcommand{\fw}{\text{\fw}}
\begin{document}
\allowdisplaybreaks

\title{Stable phase retrieval for infinite dimensional subspaces of $L_2(\R)$}

 \author{Robert Calderbank}
 \address{Department of Electrical and Computer Engineering\\
   Duke University\\
    Durham, NC USA}\email{Robert.Calderbank@math.duke.edu}
 
 \author{ Ingrid Daubechies}
 \address{Department of Mathematics and Department of Electrical and Computer Engineering\\
   Duke University\\
    Durham, NC USA}\email{ingrid@math.duke.edu}

 \author{ Daniel Freeman}
\address{Department of Mathematics and Statistics\\
St Louis University\\
St Louis, MO   USA} \email{daniel.freeman@slu.edu}

  \author{ Nikki Freeman}
\address{Department of Biostatistics\\
University of North Carolina at Chapel Hill\\
Chapel Hill, NC   USA} \email{nlbf@live.unc.edu}

\thanks{2020 \textit{Mathematics Subject Classification}: 42C15, 46B42, 94A20
}

\thanks{The second author was supported by grant 400837 from the Simons Foundation.  The third author was supported by grant 706481 from the Simons Foundation.}
 
 \begin{abstract}
 Phase retrieval is known to always be unstable when using a frame or continuous frame for an infinite dimensional Hilbert space.  We consider a generalization of phase retrieval to the setting of subspaces of $L_2$ which coincides with using a continuous frame for phase retrieval when the subspace is the range of the analysis operator of a continuous frame.  We then prove that there do exist infinite dimensional subspaces of $L_2$ where phase retrieval is stable.  That is, we give a method for constructing an infinite dimensional subspace $Y\subseteq L_2$ such that there exists $C\geq 1$ so that 
$$\min\big(\big\|f-g\big\|_{L_2},\big\|f+g\big\|_{L_2}\big)\leq C \big\| |f|-|g| \big\|_{L_2} \qquad\textrm{ for all }f,g\in Y.
$$

This construction also leads to new results on uniform stability of phase retrieval in finite dimensions.  Our construction has a deterministic component and a random component. 
When using sub-Gaussian random variables we achieve phase retrieval with high probability and stability constant independent of the dimension $n$ when using $m$ on the order of $n$ random vectors.  Without sub-Gaussian or any other higher moment  assumptions, 
we are able to achieve phase retrieval with high probability and stability constant independent of the dimension $n$ when using $m$ on the order of $n\log(n)$ random vectors.  
\end{abstract}

 \maketitle
 \section{Introduction}

A frame $(\phi_j)_{j\in I} \subseteq H$ for a Hilbert space $H$ allows for any vector $x\in H$ to be linearly recovered from the collection of frame coefficients $(\langle x, \phi_j\rangle)_{n\in I}$.  
 However, there are many instances in physics and engineering where one is able to obtain only the magnitude of linear measurements.  Notable examples occur in speech recognition \cite{BR}, coherent diffraction imaging \cite{MCKS}, X-ray crystallography \cite{T}, and transmission electron microscopy \cite{K}. In such
cases, one must use phase retrieval to reconstruct a signal.  Given $x\in H$, the goal of phase retrieval is to determine $x$ (up to a unimodular scalar) from the collection of values $(|\langle x, \phi_j\rangle|)_{j\in I}$.  
 When $H$ is a real Hilbert space we say that $(\phi_n)_{n\in I}$ {\em does phase retrieval } if whenever $x,y\in H$ and $(|\langle x, \phi_j\rangle|)_{j\in I}=(|\langle y, \phi_j\rangle|)_{j\in I}$ we have that $x=y$ or $x=-y$.  
 Though phase retrieval may be considered for both real and complex Hilbert spaces, we will only be considering real Hilbert spaces in this paper.
 
When $(\phi_j)_{j\in I}\subseteq H$ is a frame of $H$, we have that  the analysis operator $T:H\rightarrow \ell_2(I)$ is an embedding of $H$ into $\ell_2(I)$, where  $T(x):=(\langle x, \phi_j\rangle)_{j\in I}$ for all $x\in H$.    
Thus, being able to recover  each $x\in H$ (up to a unimodular scalar) from the collection of values $(|\langle x, \phi_n\rangle|)_{n\in I}$ is equivalent to being able to recover  each $f=(f_j)_{j\in I}\in T(H)\subseteq \ell_2(I)$ (up to a unimodular scalar) from its absolute value $|f|=(|f_j|)_{j\in I}\in\ell_2(I)$.  This equivalent formulation leads us to the definition that {\em a subspace $Y\subseteq\ell_2(I)$ does  phase retrieval} if for all $f,g\in Y\subseteq\ell_2(I)$ with $|f|=|g|$ we have that $f=g$ or $f=-g$.  In particular, a frame $(\phi_j)_{j\in I}$  of $H$ with analysis operator $T$ does phase retrieval if and only if the subspace $T(H)\subseteq \ell_2(I)$ does phase retrieval.

We think of phase retrieval for a subspace as being {\em stable} if for all $f,g\in Y\subseteq \ell_2(I)$ when $|f|$ is close to $|g|$ then $f$ is proportionally close to either $g$ or $-g$.  That is, {\em $Y\subseteq \ell_2(I)$ does stable phase retrieval } if there exists a constant $C>0$ such that for all $f,g\in Y$ we have that $\min(\|f-g\|,\|f+g\|)\leq C \||f|-|g|\|$.  If we consider the equivalence relation $\sim$ on $Y$ to be $f\sim g$ if and only if $f=g$ or $f=-g$ then $Y\subseteq \ell_2(I)$ does $C$-stable phase retrieval is equivalent to the map $|f|\mapsto f/\sim$ is well defined and is $C$-Lipschitz.  Because of this, having a good stability bound for phase retrieval is of fundamental importance in applications.

Phase retrieval using a frame for a finite-dimensional Hilbert space is always stable \cite{B}\cite{CCPW}, although the stability constant may be very large. Thus for all $n\in\N$, if a subspace $Y\subseteq \ell_2^n$ does phase retrieval then $Y\subseteq \ell_2^n$ does stable phase retrieval.  In contrast to this, phase retrieval using a frame for an infinite-dimensional Hilbert space is always unstable \cite{CCD}.  Thus, if $Y\subseteq\ell_2$ and $Y$ is infinite-dimensional then $Y$ does not do stable phase retrieval.  The basic reason for this instability is that if $Y\subseteq \ell_2$ is infinite-dimensional then for all $\vp>0$ there are always vectors $x,y\in \ell_2$ with $\|x\|=\|y\|=1$ such that $x$ and $y$ may be slightly perturbed to have disjoint support.  In particular, we can choose $x,y\in Y$ so that almost all of $x$ comes before almost all of $y$ and so there is a sequence $(a_1,a_2,...)\in\ell_2$ with 
 $$\|x-(a_1,a_2,...,a_k,0,0,0,...)\|<\vp \textrm{ and }\|y-(0,...,0,a_{k+1},a_{k+2},...)\|<\vp.$$
Thus, if $f=x+y$ and $g=x-y$ then $|f|$ and $|g|$ will both be approximately $(|a_1|,|a_2|,|a_3|,...)$ so $|f|$ and $|g|$ will be very close.     However, $\|f-g\|=2\|y\|=2$ and $\|f-(-g)\|=2\|x\|=2$.  So even though $|f|$ and $|g|$ are close, $f$ is 2 away from $g$ and $f$ is 2 away from $-g$.  Thus phase retrieval for infinite dimensional subspaces of $\ell_2$ is not stable.

    We now consider phase retrieval for infinite dimensional subspaces $Y\subseteq L_2(\R)$ instead of $Y\subseteq \ell_2$, where we use $L_2(\R)$ to denote the space of square integrable real valued functions defined on $\R$.  In particular, we will give a method for constructing an infinite-dimensional closed subspace $Y\subseteq L_2(\R)$ and constant $C>0$ such that for all $f,g\in Y$ we have that $\min(\|f-g\|_{L_2},\|f+g\|_{L_2})\leq C \||f|-|g|\|_{L_2}$.  That is, unlike in $\ell_2$, stable phase retrieval is possible for  infinite-dimensional subspaces of $L_2(\R)$.
    
The idea of doing phase retrieval in $L_2(\R)$ was considered in \cite{AG}, where the authors study phase retrieval using continuous frames for both Hilbert spaces and general Banach spaces.  If $(\phi_t)_{t\in\R}$ is a continuous frame of a separable Hilbert space $H$, then    the analysis operator $T:H\rightarrow L_2(\R)$ given by $(T(f))(t)=\langle f, \phi_t\rangle$ for all $t\in\R$ is an isomorphic embedding of $H$ into $L_2(\R)$.  Thus phase retrieval for a continuous frame $(\phi_t)_{t\in\R}$ of $H$ is equivalent to phase retrieval for the subspace $T(H)\subseteq L_2(\R)$.  Like in the case for discrete frames,  phase retrieval using a continuous frame for an infinite-dimensional Hilbert space is always unstable \cite{AG}.  However, this does not imply that phase retrieval for every infinite-dimensional subspace of $L_2(\R)$ is unstable.  This is where one of the key differences between discrete and continuous frames comes in.  Every closed subspace of  $\ell_2$ is the range of the analysis operator of a discrete frame $(\phi_n)_{n=1}^\infty$.  On the other hand, not every closed subspace of $L_2(\R)$ is the range of the analysis operator of a continuous frame $(\phi_t)_{t\in\R}$.   Thus, all of the subspaces $Y\subseteq L_2(\R)$ for which we prove do stable phase retrieval must necessarily  not be the range of the analysis operator of a continuous frame.  
   
   Many of our proofs will rely on probabilistic arguments.  As we hope that our paper will be accessible to a broad mathematical community we recall now some definitions and notation.  For a measurable set $E\subseteq[0,1]$ we use $\textrm{Prob}(E)$ to denote Lebesgue measure of $E$. We use the term {\em random variable} to refer to a measurable function $f$ from a probability space to $\R$.  If $f$ is a random variable and $F\subseteq\R$ is measurable we use the notation $\textrm{Prob}(f\in F):=\textrm{Prob}(f^{-1}(F))$.    We say that two random variables $f,g$ {\em have the same distribution} if for all measurable $F\subseteq\R$ we have that $\textrm{Prob}(f\in F)=\textrm{Prob}(g\in F)$.
     If $(y_n)_{n=1}^\infty$ is a sequence in $L_2([0,1])$ then we say that $(y_n)_{n=1}^\infty$ is {\em independent} if for all $J\subseteq\N$ and $E,F\subseteq\R$ we have that if $x\in span(y_n)_{n\in J}$ and $y\in span(y_n)_{n\in J^c}$ then $\textrm{Prob}(x\in E\textrm{ and }y\in F)=\textrm{Prob}(x\in E)\textrm{Prob}(y\in F)$.

 We will be considering subspaces of $L_2(\R)$ of the form $\overline{span}(y_j+1_{(j,j+1]})$ where $(y_j)_{j=1}^\infty$ is an orthonormal sequence of independent mean-zero random variables in $L_2([0,1])$ and $(1_{(j,j+1]})_{j=1}^\infty$ is just a sequence of indicator functions of disjoint intervals.  The following theorem gives multiple characterizations for when a subspace of that form does stable phase retrieval in $L_2(\R)$.
 
\begin{thm}\label{T:main}
Let $(y_j)_{j=1}^\infty$ be an orthonormal sequence of independent mean-zero random variables in $L_2([0,1])$.  Then the following are equivalent,
\begin{enumerate}
\item The subspace $\overline{span} (y_j+1_{(j,j+1]})_{j=1}^\infty$ does stable phase retrieval in $L_2(\R)$.
\item There exist constants $a,\gamma>0$ with $\textrm{Prob}(|x|\geq a\|x\|)\geq\gamma$ for all $x\in \overline{span}(y_j)_{j=1}^\infty$.
\item  The $L_1$ and $L_2$ norms of $y_j$ are uniformly comparable.  That is, there exists $A>0$ such that  $A\|y_j\|_{L_2([0,1]}\leq \|y_j\|_{L_1([0,1])}\leq  \|y_j\|_{L_2([0,1])}$ for all $j\in\N$.
\item It is not the case that for all $\vp>0$, the subspace $\overline{span}(y_j)_{j=1}^\infty\subseteq L_2([0,1])$ contains two independent unit vectors which may be perturbed by less than $\vp$ to have disjoint support.  
\end{enumerate}
Furthermore, if $a,\gamma>0$ satisfy $(2)$ then the subspace $\overline{span} (y_j+1_{(j,j+1]})_{j=1}^\infty\subseteq L_2(\R)$ does $6a^{-1}\gamma^{-1}$-stable phase retrieval.  That is, for all $f,g\in \overline{span} (y_j+1_{(j,j+1]})_{j=1}^\infty$, we have that 
$$min(\| f-g\|_{L_2(\R)},\|f+g\|_{L_2(\R)}) \leq 6a^{-1}\gamma^{-1} \big\| |f|-|g| \big\|_{L_2(\R)}.
$$
\end{thm}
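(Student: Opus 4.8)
The plan is to establish the cycle of implications; the one with real content is $(2)\Rightarrow(1)$ together with the quantitative bound, so I describe that first. Write $e_j:=y_j+1_{(j,j+1]}$, so $Y=\overline{\spa}(e_j)$, and for $f=\sum_j p_je_j\in Y$ split $f=w_f+T_f$ into its restrictions to $[0,1]$ and to $(1,\infty)$: thus $w_f=\sum_j p_jy_j\in\overline{\spa}(y_j)$, $T_f$ is the step function with value $p_j$ on $(j,j+1]$, and $\|w_f\|^2=\|T_f\|^2=\sum_j p_j^2$. Given $f,g\in Y$, replacing $g$ by $-g$ if necessary I may assume $\langle f,g\rangle\geq 0$, so $\min(\|f-g\|,\|f+g\|)=\|f-g\|$; set $h=f-g$, $k=f+g\in Y$. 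A pointwise check gives $\{|h|>|k|\}=\{fg<0\}$ and, crucially, $\big||f|-|g|\big|=\min(|h|,|k|)$ everywhere, so the claim is equivalent to $\|h\|_{L_2(\R)}^2\leq 36\,a^{-2}\gamma^{-2}\int_\R\min(h^2,k^2)$. Writing $h=\sum c_je_j$, $k=\sum d_je_j$, $u=w_h$, $v=w_k$ (so $\|u\|\leq\|v\|$ and $\|h\|^2=2\|u\|^2$), and using that $h,k$ are constant on each $(j,j+1]$,
\[
\int_\R\min(h^2,k^2)=\int_{[0,1]}\min(u^2,v^2)+\sum_j\min(c_j^2,d_j^2);
\]
I then need to bound $\|u\|^2$ by a multiple of this.

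The argument splits into an easy range and the real case. For the easy range, let $F=\{|u|\geq a\|u\|\}$, of measure $\geq\gamma$ by $(2)$: if at least half of $F$ lies in $\{v^2\geq u^2\}$ then $\int_{[0,1]}\min(u^2,v^2)\geq\tfrac{\gamma}{2}a^2\|u\|^2=\tfrac{\gamma a^2}{4}\|h\|^2$, giving a stability constant below $6a^{-1}\gamma^{-1}$; the symmetric situation (using $(2)$ for $v$) and the case where $\sum_j\min(c_j^2,d_j^2)$ is at least a fixed fraction of $\sum_j c_j^2$ (so the tail sum alone suffices) are disposed of the same way. What remains is the concentrated case: outside these ranges, $u$ puts essentially all its $\ell_2$-mass on $D:=\{j:c_j^2>d_j^2\}$ while $v$ is small on $D$, precisely $\|\tilde u\|^2+\|v_D\|^2=\sum_j\min(c_j^2,d_j^2)$ is small relative to $\|u\|^2$, where $u_D=\sum_{j\in D}c_jy_j$, $\tilde u=\sum_{j\notin D}c_jy_j$, $v_D=\sum_{j\in D}d_jy_j$, $v_{D^c}=\sum_{j\notin D}d_jy_j$; in particular $u\approx u_D$ and $v\approx v_{D^c}$ in $L_2([0,1])$. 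The key point is that $D\cap D^c=\emptyset$, so by independence of $(y_j)$ the functions $u_D$ and $v_{D^c}$ are independent, whence
\[
\int_{[0,1]}\min(u_D^2,v_{D^c}^2)=\int_0^\infty\textrm{Prob}(u_D^2>t)\,\textrm{Prob}(v_{D^c}^2>t)\,dt\geq\gamma^2a^2\min\big(\|u_D\|^2,\|v_{D^c}\|^2\big),
\]
using $(2)$ for $u_D$ and $v_{D^c}$ on $0<t<a^2\min(\|u_D\|^2,\|v_{D^c}\|^2)$; since $\|u_D\|^2=\|u\|^2-\|\tilde u\|^2$ and $\|v_{D^c}\|^2=\|v\|^2-\|v_D\|^2\geq\|u\|^2-\|v_D\|^2$, this is of order $\gamma^2a^2\|u\|^2$. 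It remains to transfer this lower bound from $\int\min(u_D^2,v_{D^c}^2)$ to $\int\min(u^2,v^2)$, i.e. to control the error from the replacements $u\rightsquigarrow u_D$, $v\rightsquigarrow v_{D^c}$, and to bookkeep the constants so as to land exactly on $36\,a^{-2}\gamma^{-2}$.

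For the remaining equivalences I would argue as follows. $(1)\Rightarrow(4)$, contrapositively: if $(4)$ fails, then for each $\vp>0$ there are independent unit $p,q\in\overline{\spa}(y_j)$ --- which may be taken with disjoint coefficient supports $J,J^c$ --- and $\tilde p\tilde q=0$ with $\|p-\tilde p\|,\|q-\tilde q\|<\vp$; then $f=\sum_{j\in J}p_je_j+\sum_{j\notin J}q_je_j$ and $g=\sum_{j\in J}p_je_j-\sum_{j\notin J}q_je_j$ have $|f|=|g|$ off $[0,1]$ and $\big\||f|-|g|\big\|=\big\||p+q|-|p-q|\big\|\to 0$ (by disjointness of $\tilde p,\tilde q$), while $\|f-g\|^2=\|f+g\|^2=8$, contradicting $(1)$. $(2)\Leftrightarrow(3)$: $(2)$ applied to $x=y_j$ gives $\|y_j\|_{L_1}\geq a\gamma\|y_j\|_{L_2}$, which is $(3)$; conversely $(3)$ plus a moment-comparison for sums of independent mean-zero variables gives $\|x\|_{L_1}\gtrsim\|x\|_{L_2}$ throughout $\overline{\spa}(y_j)$, and Paley--Zygmund then gives $(2)$. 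To close the loop (say $(4)\Rightarrow(3)$, contrapositively), a failure of $(3)$ yields orthonormal $y_{j_k}$ that are $L_2$-close to functions supported on sets of measure $\to 0$, which after passing to a subsequence can be arranged essentially disjoint, producing the independent almost-disjointly-supported pairs needed to negate $(4)$. The step I expect to be the main obstacle is the error control in the concentrated case: the crude bound $\min(u^2,v^2)\geq\min(u_D^2,v_{D^c}^2)-2|u_D|(|\tilde u|+|v_D|)$ costs, after integration, an error of order $a\gamma\|u\|^2$, which swamps the main term of order $a^2\gamma^2\|u\|^2$ when $a\gamma$ is small; to make the inequality close --- and to produce the clean constant $6a^{-1}\gamma^{-1}$ --- one must use independence of the disjointly indexed pieces again (so that cross terms like $\int|u_D||\tilde u|$ and $\int|v_{D^c}||v_D|$ factor as products of $L_1$-norms and are genuinely of lower order), retain rather than discard the positive tail contribution $\|\tilde u\|^2+\|v_D\|^2$, and optimize the thresholds in the case split.
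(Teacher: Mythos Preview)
Your sketches of $(2)\Leftrightarrow(3)$, $(1)\Rightarrow(4)$, and $(4)\Rightarrow(3)$ match the paper's (the ``moment comparison'' is Khintchine plus $1$-suppression-unconditionality of independent mean-zero sequences in $L_1$; Paley--Zygmund is the paper's small-ball lemma; and for $(4)\Rightarrow(3)$ the paper is simpler than your subsequence extraction---it just takes $y_1$ and a single $y_N$ with $\|y_N\|_{L_1}<\vp^2$, using independence to see $\|y_1\cdot 1_{|y_N|\geq\vp}\|^2=\textrm{Prob}(|y_N|\geq\vp)<\vp$). The real difference is in $(2)\Rightarrow(1)$ and the quantitative bound.

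You work with $h=f-g$, $k=f+g$ and try to bound $\int_{[0,1]}\min(u^2,v^2)$ by decomposing $u,v$ along $D=\{j:c_j^2>d_j^2\}$; this forces you to transfer the independence estimate from $\min(u_D^2,v_{D^c}^2)$ back to $\min(u^2,v^2)$, and as you correctly diagnose, the cross terms from that transfer are the same order as the main term. Your proposed repair (retain the tail contribution $\|\tilde u\|^2+\|v_D\|^2$ and apply the triangle inequality at the level of $\|\min(|u|,|v|)\|$ rather than its square) does close the argument, but you have not carried it out, and the bookkeeping to land on exactly $6a^{-1}\gamma^{-1}$ is not there. The paper bypasses the whole difficulty by staying in the $(f,g)$ picture. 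Write $\vp_j=\sign(a_jb_j)$ and $\delta_j=b_j-\vp_j a_j$, so $|\delta_j|=\big||a_j|-|b_j|\big|=\min(|c_j|,|d_j|)$ in your notation; the tail immediately gives $(\sum\delta_j^2)^{1/2}\leq\||f|-|g|\|$. On $[0,1]$, instead of comparing $f$ to $g$, compare $f$ to the sign-flipped version $\sum_j\vp_ja_jy_j$: the ordinary triangle inequality on $\big\||f|-|\cdot|\big\|$ gives $\big\||\sum a_jy_j|-|\sum\vp_ja_jy_j|\big\|\leq\||f|-|g|\|_{[0,1]}+(\sum\delta_j^2)^{1/2}\leq 2\||f|-|g|\|$. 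After this replacement, $\sum a_jy_j$ and $\sum\vp_ja_jy_j$ are \emph{exactly} $x+y$ and $x-y$ with $x=\sum_{\vp_j=1}a_jy_j$, $y=\sum_{\vp_j=-1}a_jy_j$ independent---no residual error terms---and $\||x+y|-|x-y|\|^2=4\int\min(x^2,y^2)\geq 4a^2\gamma^2\min(\|x\|^2,\|y\|^2)$ follows in one line. Reassembling $\|f-g\|$ from $2\|y\|$ and $(\sum\delta_j^2)^{1/2}$ gives the constant $6a^{-1}\gamma^{-1}$ without any case split or optimization. The moral: pay the $\delta$-correction \emph{before} invoking independence, at the level of $\||f|-|\cdot|\|$ where the triangle inequality is clean, rather than inside $\int\min(\cdot,\cdot)$ where it is not.
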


Note that condition (3) is simply that the sequence of norms $(\|y_j\|_{L_1([0,1])})_{j=1}^\infty$ is bounded away from $0$, which is simple to check in most circumstances.  However, checking directly that a subspace $Y\subseteq L_2$ does phase retrieval requires checking a condition for all pairs of vectors $(x,y)\in Y\times Y$.

Recall that the {\em variance} of a mean-zero random  variable $y$ is $\|y\|^2_{L_2}=\int |y|^2$.  If $(y_j)_{j=1}^\infty$ is a sequence of mean-zero independent random variables on $[0,1]$ with finite variance then $(y_j/\|y_j\|_{L_2([0,1])})_{j=1}^\infty$ will be an orthonormal sequence of independent mean-zero random variables in $L_2([0,1])$.  Furthermore, if  the sequence $(y_j)_{j=1}^\infty$ is identically distributed then there exists a constant $A>0$ such that $\|y_j\|_{L_1([0,1])}=A$ for all $j\in\N$.  Hence we have the following corollary due to $(3)\Rightarrow(1)$ in Theorem \ref{T:main}.
\begin{cor}\label{C:main0}
Let $(y_j)_{j=1}^\infty$ be a sequence of mean-zero independent identically-distributed random variables on $[0,1]$ with finite variance.  Then the subspace $\overline{span} (y_j+1_{(j,j+1]})_{j=1}^\infty$ does stable phase retrieval in $L_2(\R)$.
\end{cor}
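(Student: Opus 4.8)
The plan is to deduce the corollary from the implication $(3)\Rightarrow(1)$ of Theorem~\ref{T:main} after normalizing the $y_j$. First I would dispose of the degenerate case: if $\operatorname{var}(y_1)=0$ then, being mean-zero, every $y_j$ is $0$ a.e., so $\overline{span}(y_j+1_{(j,j+1]})=\overline{span}(1_{(j,j+1]})$ is isometric to $\ell_2$ and does not do phase retrieval at all; hence I assume $\sigma^2:=\|y_1\|_{L_2([0,1])}^2$ is finite and positive and set $z_j:=y_j/\sigma$. Then $(z_j)_{j=1}^\infty$ is orthonormal in $L_2([0,1])$: each $z_j$ has norm $1$, and for $i\neq j$ we have $\langle z_i,z_j\rangle=\sigma^{-2}\E[y_iy_j]=\sigma^{-2}\,\E[y_i]\,\E[y_j]=0$ by independence and mean-zero. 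Moreover $(z_j)$ is independent in the sense of the introduction, since $span(z_n)_{n\in J}=span(y_n)_{n\in J}$ for every $J\subseteq\N$, so the defining condition for $(z_j)$ is identical to that for $(y_j)$. Thus $(z_j)$ satisfies the hypotheses of Theorem~\ref{T:main}.

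Next I would check condition $(3)$ for $(z_j)$. On the probability space $[0,1]$ one always has $\|z_j\|_{L_1}\le\|z_j\|_{L_2}=1$ by Cauchy--Schwarz, so the upper inequality in $(3)$ is automatic. Because the $y_j$ are identically distributed, $\|z_j\|_{L_1([0,1])}=\|y_1\|_{L_1([0,1])}/\sigma$ is a single number $A$ independent of $j$, and $A>0$ since $\sigma>0$ forces $y_1\neq0$ on a set of positive measure. Hence $A\|z_j\|_{L_2}\le\|z_j\|_{L_1}\le\|z_j\|_{L_2}$ for all $j$, which is precisely condition $(3)$. Applying $(3)\Rightarrow(1)$ of Theorem~\ref{T:main} gives that $\overline{span}(z_j+1_{(j,j+1]})_{j=1}^\infty$ does stable phase retrieval in $L_2(\R)$ (and, via $(3)\Rightarrow(2)$ and the quantitative part of the theorem, with a stability constant depending only on $A$).

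It remains to pass from $(z_j)$ back to the un-normalized $(y_j)$. Since $y_j+1_{(j,j+1]}=\sigma z_j+1_{(j,j+1]}$, we have $\overline{span}(y_j+1_{(j,j+1]})=\overline{span}(z_j+\sigma^{-1}1_{(j,j+1]})$, which is not literally of the form treated in Theorem~\ref{T:main} unless $\sigma=1$; the discrepancy is only a scaling of the indicator profiles. I would remove it with the unitary $V$ of $L_2(\R)$ that is the identity on $[0,1]$ and, on $(1,\infty)$, is the $L_2$-isometric change of variables carrying each $(j,j+1]$ onto a disjoint interval $I_j$ of length $\sigma^{-2}$, so that the $I_j$ still partition $(1,\infty)$ and $V1_{(j,j+1]}=\sigma 1_{I_j}$. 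Because $V$ is composition with a bijection followed by multiplication by a positive factor, $|Vf|=V|f|$, so $V$ preserves both $\min(\|f-g\|,\|f+g\|)$ and $\||f|-|g|\|$, hence carries a stably-phase-retrievable subspace to one with the same constant; and $V(z_j+\sigma^{-1}1_{(j,j+1]})=z_j+1_{I_j}$. This reduces the corollary to stable phase retrieval for $\overline{span}(z_j+1_{I_j})_{j=1}^\infty$, which is Theorem~\ref{T:main} with the unit intervals $(j,j+1]$ replaced by the disjoint intervals $I_j$, established by the identical argument. This last scale-bookkeeping is the only point not immediate from the definitions; I do not foresee any real obstacle in the argument.
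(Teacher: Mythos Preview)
Your approach is the same as the paper's: normalize to $z_j:=y_j/\sigma$, note that the i.i.d.\ hypothesis makes $\|z_j\|_{L_1}$ a single positive constant $A$ so that condition~(3) holds, and invoke $(3)\Rightarrow(1)$ of Theorem~\ref{T:main}. You have simply been more careful than the paper on two points the authors leave implicit. First, you flag the degenerate case $\sigma=0$, where the span collapses to $\overline{span}(1_{(j,j+1]})$ and phase retrieval fails outright; the paper is tacitly assuming nonzero variance. Second, you notice that $\overline{span}(y_j+1_{(j,j+1]})=\overline{span}(z_j+\sigma^{-1}1_{(j,j+1]})$ is not literally of the form treated in Theorem~\ref{T:main} when $\sigma\neq1$, and you supply a transfer argument via a unitary change of variables.

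Your unitary fix is correct, but note that the intervals $I_j$ you produce have length $\sigma^{-2}$, so $(1_{I_j})$ is orthogonal but not orthonormal; appealing to ``Theorem~\ref{T:main} with $(j,j+1]$ replaced by $I_j$'' still means rerunning the proof with an extra constant (which, as you say, is harmless). A shorter route that avoids this residual bookkeeping: drop unitarity and use the positive isomorphism $U$ that is the identity on $[0,1]$ and multiplication by $\sigma$ on $(1,\infty)$. Then $|Uf|=U|f|$, so $U$ preserves stable phase retrieval up to the factor $\|U\|\,\|U^{-1}\|=\max(\sigma,\sigma^{-1})$, and $U$ carries $y_j+1_{(j,j+1]}$ exactly to $\sigma(z_j+1_{(j,j+1]})$, landing you directly in the setting of Theorem~\ref{T:main} as stated.
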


Note that if we did not add on the indicator functions $(1_{(j,j+1]})_{j=1}^\infty$ then Theorem \ref{T:main} and Corollary \ref{C:main0} would be false.  Indeed, one could take the sequence $(y_j)_{j=1}^\infty$ to be the Rademacher functions, which is an independent sequence of mean-zero, $\pm1$ random variables.  In this case $|y_j|=1$ for all $j\in\N$ and hence $\overline{span}(y_j)_{j=1}^\infty\subseteq L_2(\R)$ does not do phase retrieval.

The full proof of Theorem \ref{T:main} will be broken up into several parts.
We prove $(2)\Rightarrow (1)$ and the furthermore  part of Theorem \ref{T:main} in Section \ref{S:2}.  We prove the remaining parts of Theorem \ref{T:main}  in Section \ref{S:3} along with some lemmas comparing different $L_p$-norms.   
The equivalences $(2)\Leftrightarrow(3)\Leftrightarrow(4)$  follow from classical results in Banach lattice theory, and we include complete proofs so that the paper can be self-contained.
 
Our results on stable phase retrieval in infinite dimensions can be used to prove new uniform stability theorems for phase retrieval in finite dimensions. It is well known that phase retrieval for finite dimensional subspaces is always stable.  However, every known deterministic construction of a frame which does phase retrieval either has very high stability constant  or must use a very large number of vectors relative to the dimension of the space.  There have recently been many strong results where random methods are used to construct frames that with high probability perform phase retrieval with stability constant independent of the dimension of the space.
  The first result on uniform stability for phase retrieval in an $n$-dimensional Hilbert space was for  $m$ iid random vectors with uniform distribution on the sphere where $m$ was  on the order of $n\log(n)$ \cite{CSV}. This was then improved to $m$ being on the order of $n$ \cite{CL}.
  Uniform stability for phase retrieval with high probability using $m$ on the order of $n$ random vectors was then proven for many other sub-Gaussian distributions as well, assuming some additional conditions such as small-ball probability assumptions \cite{EM} or lower $L_\infty$ bounds \cite{KL}. 
  In \cite{GKK}, the authors give a partially de-randomized construction by proving that randomly sampling spherical designs results in a frame which does stable phase retrieval with high probability.
  By constructing an infinite dimensional subspace of $L_2(\R)$ which does stable phase retrieval, we have that every finite dimensional subspace does stable phase retrieval with stability constant independent of the dimension.  As every finite dimensional subspace of $L_2(\R)$ is the range of the analysis operator of a continuous frame, we are thus able to provide new constructions for continuous frames of $\ell_2^n$ which do stable phase retrieval for all $n\in\N$ with stability constant independent of the dimension.  We now consider the problem of sampling this continuous frame to obtain frames which do stable phase retrieval.
  Our construction has a random component and a deterministic component.  We prove that for all $n\in\N$ that if the random component consists of sub-Gaussian random variables then we may choose $m$ on the order of $n$ random vectors to construct a frame for $\ell_2^n$ which with high probability does phase retrieval with stability constant independent of the dimension.  
  We define what it means for a random variable to be sub-Gaussian and prove the following theorem in Section \ref{S:4}.  Recall that the analysis operator of a frame $(\phi_j)_{j=1}^N$ for $\ell_2^n$ is the map $T:\ell_2^n\rightarrow \ell_2^N$ given by $T(x)=(\langle x, \phi_j\rangle)_{j=1}^N$ for all $x
  \in\ell_2^N$.
  
  \begin{thm}\label{T:finite_sG}
There exists universal constants $c_1,c_2,c_3>0$ such that the following holds.
Let $(y_j)_{j=1}^n$ be an independent sequence of  mean-zero, variance-one, $K$-sub-Gaussian random variables.  Let $(v_j)_{j=1}^m$ be an independent sequence in $\ell_2^n$ with the same distribution as $v=(y_1,y_2,...,y_n)$. If $m\geq c_1 K^{16}\log(2K) n$ then with probability at least $1-\exp\left(-c_2 m \right)$ the frame $(\frac{1}{\sqrt{m}}v_j)_{j=1}^m\cup (e_i)_{i=1}^n$ does $(c_3 K^{6})$-stable phase retrieval.   That is, if $T:\ell_2^n\rightarrow\ell_2^{m+n}$ is the analysis operator for the frame $(\frac{1}{\sqrt{m}}v_j)_{j=1}^m\cup (e_i)_{i=1}^n$ then for all $f,g\in\ell_2^{n}$ we have that 

$$
min(\| f-g\|_{\ell_2^n},\| f+g\|_{\ell_2^n}) \leq c_3 K^{6}
\big\| |Tf|-|Tg| \big\|_{\ell_2^{m+n}}. 
$$

\end{thm}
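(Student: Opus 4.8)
The plan is to recognize the range of the analysis operator as a finitely sampled copy of the $L_2(\mathbb R)$ construction in Theorem \ref{T:main}, and then to run the implication $(2)\Rightarrow(1)$ of that theorem on it. Writing $Y_x:=\sum_{i=1}^n x_iy_i$ for the random variable attached to $x=(x_1,\dots,x_n)\in\ell_2^n$ and $\omega_1,\dots,\omega_m$ for the sample points defining $v_1,\dots,v_m$ (so that $v_\ell=(y_1(\omega_\ell),\dots,y_n(\omega_\ell))$), we have $Tx=\big(\big(\tfrac1{\sqrt m}Y_x(\omega_\ell)\big)_{\ell=1}^m,\,x\big)\in\ell_2^m\oplus\ell_2^n$, so that $T(\ell_2^n)=\overline{span}(w_i\oplus e_i)_{i=1}^n$ with $w_i:=\tfrac1{\sqrt m}(y_i(\omega_1),\dots,y_i(\omega_m))\in\ell_2^m$. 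Viewing $\ell_2^m$ as $L_2$ of $\{1,\dots,m\}$ with the uniform probability measure, the $w_i$ are \emph{approximately} orthonormal mean-zero ``random variables'' and the $e_i$ are genuinely orthonormal with pairwise disjoint supports, in the roles of the $y_j$ and the $1_{(j,j+1]}$ respectively. Since the identity block of $T$ forces $\|Tx\|_{\ell_2^{m+n}}\ge\|x\|_{\ell_2^n}$, it suffices to prove that $T(\ell_2^n)$ does $c_3K^6$-stable phase retrieval.

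The population-level input is that sub-Gaussianity forces condition $(2)$ of Theorem \ref{T:main} to hold with quantitative constants of the right order. Since $Y_x$ is a sum of independent sub-Gaussian variables, $\|Y_x\|_{L_4}\le cK\|x\|$, and interpolating between the $L_1$, $L_2$ and $L_4$ norms gives $\|Y_x\|_{L_1}\ge c'K^{-2}\|x\|$; Paley--Zygmund applied to $|Y_x|$ then yields $\textrm{Prob}(|Y_x|\ge \tfrac12 c'K^{-2}\|x\|)\ge c''K^{-4}$. Thus $\overline{span}(y_i)_{i=1}^n$ satisfies $(2)$ with $a\asymp K^{-2}$ and $\gamma\asymp K^{-4}$, and the bound $6a^{-1}\gamma^{-1}$ from the ``furthermore'' part of Theorem \ref{T:main} is of order $K^6$ --- this is where the exponent $6$ in the conclusion comes from.

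The probabilistic input is that, for $m\ge c_1K^{16}\log(2K)n$, the sampled configuration retains these two features with probability at least $1-e^{-c_2m}$: (i) the empirical second-moment matrix $\tfrac1m\sum_\ell v_\ell v_\ell^{\top}$ is close enough to $I_n$ that the $w_i$ are nearly orthonormal and $\tfrac1m\sum_\ell Y_x(\omega_\ell)^2\approx\|x\|^2$ for all $x$; and (ii) the empirical form of $(2)$ holds uniformly, i.e.\ $\tfrac1m\#\{\ell:|Y_x(\omega_\ell)|\ge a'\|x\|\}\ge\gamma'$ for every unit $x\in\ell_2^n$, with $a',\gamma'$ of the orders of $a,\gamma$. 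Item (i) is the standard sub-Gaussian sample-covariance estimate. For (ii) I would first establish a uniform law of large numbers for $x\mapsto\tfrac1m\sum_\ell|Y_x(\omega_\ell)|$: by symmetrization and the contraction principle ($t\mapsto|t|$ is $1$-Lipschitz and vanishes at $0$), $\mathbb{E}\sup_{\|x\|=1}\big|\tfrac1m\sum_\ell|Y_x(\omega_\ell)|-\|Y_x\|_{L_1}\big|\lesssim\tfrac1m\,\mathbb{E}\big\|\sum_\ell\varepsilon_\ell v_\ell\big\|_{\ell_2^n}\lesssim\sqrt{n/m}$, so that $m\gtrsim n$ already suffices --- this is exactly the place at which sub-Gaussianity buys the sample count $n$ rather than the $n\log n$ that a net-over-the-sphere argument would cost. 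Truncating the sub-Gaussian tails at level $\asymp K\sqrt{\log(2K)}$ (the source of the $\log(2K)$ factor), applying Paley--Zygmund at the empirical level, and upgrading via a Talagrand-type concentration inequality then give (ii) with the stated probability; tracking all the polynomial-in-$K$ losses through (i) and (ii) is what pins down the exponent $16$.

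On the resulting good event I would reprove $(2)\Rightarrow(1)$ of Theorem \ref{T:main} for the concrete subspace $\overline{span}(w_i\oplus e_i)$: split $f,g\in T(\ell_2^n)$ across the $\ell_2^m$ and $\ell_2^n$ summands, use the pointwise identity $(p-q)^2=(|p|-|q|)^2+4(pq)^-$ with $(t)^-:=\max(-t,0)$ together with the empirical small-ball estimate (ii) in place of condition $(2)$, and carry the $O(\delta)$ errors coming from the fact that the $w_i$ are only $\delta$-orthonormal, so that the inner products and norms on the $\ell_2^m$-block match the exact Gram data from the $e_i$-block only up to $\delta$. I expect two main obstacles: first, obtaining the uniform empirical small-ball estimate (ii) with sample size $m\asymp n$ (up to powers of $K$) rather than $m\asymp n\log n$, which is precisely why the full sub-Gaussian hypothesis is needed rather than merely finite fourth moments; and second, quantifying how small $\delta$ must be for the perturbed version of the $(2)\Rightarrow(1)$ argument to close with stability constant of order $K^6$ --- the bookkeeping that fixes the exponent $16$ in the number of samples.
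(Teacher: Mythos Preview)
Your high-level outline --- establish the population small-ball estimate $a\asymp K^{-2}$, $\gamma\asymp K^{-4}$ from sub-Gaussianity, transfer it to the empirical measure, and then rerun the $(2)\Rightarrow(1)$ argument of Theorem~\ref{T:main} on the sampled subspace --- is exactly the paper's strategy, and your identification of where the $K^6$ comes from is correct. But there is a real gap in what you call item (ii).

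The condition you write down, namely that uniformly over unit $x\in\ell_2^n$ one has $\tfrac1m\#\{\ell:|Y_x(\omega_\ell)|\ge a'\|x\|\}\ge\gamma'$, is a \emph{single-vector} empirical small-ball estimate. But the engine of the $(2)\Rightarrow(1)$ proof is Lemma~\ref{L:lower_b}, whose key step is
\[
\int_{|x|\ge a\|y\|}\bigl(\min(a\|y\|,|y|)\bigr)^2
=\textrm{Prob}\bigl(|x|\ge a\|y\|\bigr)\int\bigl(\min(a\|y\|,|y|)\bigr)^2,
\]
and that equality uses the \emph{independence} of the random variables $x$ and $y$. At the empirical level this factorization fails: the uniform measure on $\{\omega_1,\dots,\omega_m\}$ carries no independence structure, so knowing that both $\{\ell:|Y_x(\omega_\ell)|\ge a'\}$ and $\{\ell:|Y_y(\omega_\ell)|\ge a'\}$ are large does not force their intersection to be large. (In principle the ``good'' indices for $x$ and for $y$ could be nearly disjoint, and then $\sum_\ell\min(|Y_x(\omega_\ell)|,|Y_y(\omega_\ell)|)^2$ would be tiny even though your (ii) holds for both.) Your pointwise identity $(p-q)^2=(|p|-|q|)^2+4(pq)^-$ does not sidestep this; after the sign-splitting $f\leftrightarrow x+y$, $h\leftrightarrow x-y$, one still has to lower-bound $\tfrac1m\sum_\ell\min(|Y_x(\omega_\ell)|,|Y_y(\omega_\ell)|)^2$ for every pair $x,y$ with disjoint coordinate supports.

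The paper therefore works with pairs from the outset. For $x,y$ with disjoint support it sets
\[
J_{x,y}(b)=\bigl\{j\in[m]:\,|\langle x,v_j\rangle|\ge b\|x\|\ \text{and}\ |\langle y,v_j\rangle|\ge b\|y\|\bigr\},
\]
uses the population-level independence of $\langle x,v\rangle$ and $\langle y,v\rangle$ (coming from independent coordinates of $v$) to get $\textrm{Prob}(j\in J_{x,y}(a))\ge\gamma^2$, and then Hoeffding plus a union bound over an $\varepsilon$-net of \emph{pairs} $D_\varepsilon=\bigcup_{I\subseteq[n]}Z_{I,\varepsilon}\times Z_{I^c,\varepsilon}$ gives $|J_{x,y}(a)|\ge\tfrac12\gamma^2 m$ uniformly on the net. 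The extension from the net to all disjoint pairs is where the sub-Gaussian upper frame bound is used: if $(m^{-1/2}v_j)$ has upper frame bound $2$, then $|\langle x-x_0,v_j\rangle|$ cannot be large on too many indices, so $J_{x_0,y_0}(a)\subseteq J_{x,y}(a/2)$ up to a small set. This is also where the $\log(2K)$ in the sample count arises --- the net resolution is $\varepsilon\asymp\gamma a\asymp K^{-6}$, so $|D_\varepsilon|\le(6/\varepsilon)^n$ contributes $\log(K^6)$, not $\log n$; a net argument here does \emph{not} cost $n\log n$ as you suggest.

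Your symmetrization-and-contraction route to a uniform empirical $L_1$ law, followed by empirical Paley--Zygmund, is a legitimate alternative for the single-vector statement, but to close the argument you would have to upgrade it to a uniform statement about the \emph{two-vector} functional $(x,y)\mapsto\tfrac1m\sum_\ell\min(|\langle x,v_\ell\rangle|,|\langle y,v_\ell\rangle|)^2$ over disjoint-support pairs (or equivalently about $|J_{x,y}|$). That is the missing piece; once you have it, the rest of your plan --- Lemmas~\ref{L:signs} and~\ref{L:lower_b} in sampled form, with the $e_i$-block reading off the $\delta_i$ exactly --- goes through essentially as in the paper, and the ``approximate orthonormality'' bookkeeping you worry about is subsumed by the upper frame bound and is not a separate obstacle.
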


 In the following theorem we obtain uniform stability without sub-Gaussian assumptions.  However, the  number of random vectors required is increased to be on the order of $n\log(n)$. This appears to be the first result on uniform stability of phase retrieval which does not rely on any higher moment conditions.

  \begin{thm}\label{T:finite}
Let $a,\gamma>0$.  Then there exist $k_1,k_2>0$ which depend only on the values $a$ and $\gamma$ such that for all $n\in\N$ and $m\geq k_1 n\log(n)$ the following holds.
Suppose that $(y_j)_{j=1}^n$ is a sequence of  mean-zero, variance-one, independent random variables such that  $\textrm{Prob}(|x|\geq a\|x\|)\geq\gamma$ for all $x\in \overline{span}(y_j)_{j=1}^n$.
Let $(v_j)_{j=1}^m$ be an independent sequence of random vectors in $\ell_2^n$ each with the same distribution as $v=(y_1,y_2,...,y_n)$. Then with probability at least $1-\exp\left(-k_2 m \right)$ the frame $(\frac{1}{\sqrt{m}}v_j)_{j=1}^m\cup (e_i)_{i=1}^n$ does $(12a^{-1}\gamma^{-1} +1)$-stable phase retrieval.  That is, if $T:\ell_2^n\rightarrow\ell_2^{m+n}$ is the analysis operator for the frame $(\frac{1}{\sqrt{m}}v_j)_{j=1}^m\cup (e_i)_{i=1}^n$ then for all $f,g\in\ell_2^{n}$ we have that 
$$
min(\| f-g\|_{\ell_2^n},\| f+g\|_{\ell_2^n}) \leq (12a^{-1}\gamma^{-1} +1)
\big\| |Tf|-|Tg| \big\|_{\ell_2^{m+n}}. 
$$
\end{thm}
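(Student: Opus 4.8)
The plan is to recognise the frame $(\tfrac1{\sqrt m}v_j)_{j=1}^m\cup(e_i)_{i=1}^n$ as a random sampling of the continuous frame attached to the subspace $\overline{span}(y_j+1_{(j,j+1]})_{j=1}^n\subseteq L_2(\R)$, and to transfer stability from the proof of Theorem \ref{T:main}. Writing $t_1,\dots,t_m$ for i.i.d.\ uniform points of $[0,1]$, so that $v_j=(y_1(t_j),\dots,y_n(t_j))$ and $\langle f,v_j\rangle=\sum_i f_iy_i(t_j)$, one has for all $f,g\in\ell_2^n$
\[
\big\||Tf|-|Tg|\big\|_{\ell_2^{m+n}}^2=\tfrac1m\sum_{j=1}^m\big(|\langle f,v_j\rangle|-|\langle g,v_j\rangle|\big)^2+\sum_{i=1}^n\big(|f_i|-|g_i|\big)^2 :
\]
the second sum is exactly the part of $\||F|-|G|\|_{L_2(\R)}^2$ supported on $\bigcup_i(i,i+1]$, where $F=\sum_i f_i(y_i+1_{(i,i+1]})$, and the first sum is the Monte Carlo estimate of the part supported on $[0,1]$. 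Also $\tfrac1m\sum_j\langle h,v_j\rangle^2+\|h\|^2\ge\|h\|^2$ for all $h$, a trivial lower frame bound.

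The probabilistic heart of the argument is the following uniform small-ball estimate (note that, the $y_i$ being orthonormal, the hypothesis reads $\textrm{Prob}(|\langle x,v\rangle|\ge a\|x\|)\ge\gamma$ for every $x\in\ell_2^n$): if $m\ge k_1(a,\gamma)\,n\log n$, then with probability at least $1-\exp(-k_2(a,\gamma)m)$,
\[
\#\big\{\,j\le m:\ |\langle x,v_j\rangle|\ge\tfrac a2\|x\|\,\big\}\ \ge\ \tfrac\gamma2\,m\qquad\text{for every }x\in\ell_2^n .
\]
To establish it I would fix the $\tfrac2a$-Lipschitz function $\phi\colon\R\to[0,1]$ with $\mathbf 1_{\{|t|\ge a\}}\le\phi(t)\le\mathbf 1_{\{|t|\ge a/2\}}$. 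For a fixed unit $x$ the variables $\phi(\langle x,v_j\rangle)$ are i.i.d.\ in $[0,1]$ with mean $\ge\textrm{Prob}(|\langle x,v\rangle|\ge a)\ge\gamma$, so Hoeffding gives $\tfrac1m\sum_j\phi(\langle x,v_j\rangle)\ge\tfrac{3\gamma}4$ off a set of probability $e^{-\gamma^2 m/8}$; a union bound over a $\delta$-net of the unit sphere of $\ell_2^n$ with $\delta=a\gamma/(32\sqrt n)$, of cardinality $\le(3/\delta)^n$, handles all net points simultaneously. For an arbitrary unit $x$ at distance $\le\delta$ from a net point $x'$ one uses $\big|\tfrac1m\sum_j\phi(\langle x,v_j\rangle)-\tfrac1m\sum_j\phi(\langle x',v_j\rangle)\big|\le\tfrac1m\sum_j\min(1,\tfrac{2\delta}{a}\|v_j\|)$, whose right side is an average of i.i.d.\ $[0,1]$-valued variables of mean $\le\tfrac{2\delta}{a}\E\|v\|\le\tfrac{2\delta}{a}\sqrt n=\tfrac{\gamma}{16}$ (using only $\E\|v\|^2=n$), so Hoeffding keeps it below $\tfrac\gamma4$; combining, $\tfrac1m\sum_j\phi(\langle x,v_j\rangle)\ge\tfrac\gamma2$, hence $\#\{j:|\langle x,v_j\rangle|\ge\tfrac a2\|x\|\}\ge\tfrac\gamma2 m$ by homogeneity. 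Since $n\log(3/\delta)$ is of order $n\log n$, a sufficiently large $k_1$ makes $(3/\delta)^n e^{-\gamma^2 m/8}$ negligible.

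It remains to deduce the stability estimate. Using the identity $(|a|-|b|)^2=\min\big((a-b)^2,(a+b)^2\big)$ coordinatewise, $C$-stable phase retrieval for our frame is equivalent to: for all $u,w\in\ell_2^n$,
\[
\min(\|u\|^2,\|w\|^2)\ \le\ C^2\Big(\tfrac1m\sum_{j=1}^m\min(\langle u,v_j\rangle^2,\langle w,v_j\rangle^2)+\sum_{i=1}^n\min(u_i^2,w_i^2)\Big)
\]
(take $u=f-g$, $w=f+g$). Assuming $\|u\|\le\|w\|$, the coordinate sum already controls $\|u|_S\|$ for $S=\{i:|u_i|\le|w_i|\}$, so one must bound $\|u|_{S^c}\|$; this is precisely the point at which Lebesgue measure is used in the proof of $(2)\Rightarrow(1)$ in Section \ref{S:2}, and re-running that argument with the empirical measure $\tfrac1m\sum_j\delta_{t_j}$ in place of Lebesgue measure --- legitimate because it is a lower-bound argument relying on the small-ball hypothesis (now supplied by the event above, with $a,\gamma$ replaced by $\tfrac a2,\tfrac\gamma2$) and on the trivial lower frame bound, never on an upper frame bound, which is exactly why only two moments on the $y_j$ are needed --- yields the conclusion. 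Bookkeeping the constants (the exactly reproduced coordinates $(e_i)_{i=1}^n$ contribute an additive $1$, the population-to-empirical passage a factor $2$) gives the stability constant $12a^{-1}\gamma^{-1}+1$.

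The main obstacle is the probabilistic estimate of the second paragraph: obtaining a failure probability exponentially small in $m$ while assuming only finite variance for the $y_j$. The device that makes it work is replacing the indicator of $\{|\langle x,v_j\rangle|\ge a\|x\|\}$ by the bounded Lipschitz $\phi(\langle x,v_j\rangle)$, so that Hoeffding applies with no control on the tails of $\langle x,v_j\rangle$; the cost is that the net in the continuity step must have mesh $\delta\sim n^{-1/2}$ --- its oscillation being controlled only through $\E\|v\|\le\sqrt n$ --- and hence cardinality of order $n^{\,n/2}$, which is exactly the source of the extra $\log n$ in $m\gtrsim n\log n$ and is absent in the sub-Gaussian regime of Theorem \ref{T:finite_sG}, where sharper tail estimates permit a net of constant mesh.
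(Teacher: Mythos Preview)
Your probabilistic estimate in the second paragraph is correct and the Lipschitz-smoothing device is a nice alternative to the paper's use of the event $\{\|v_j\|\le 2\gamma^{-1}\sqrt n\}$ for net stability. The gap is in the last step: the claim that one can simply ``re-run'' the Section~\ref{S:2} argument with the empirical measure $\tfrac1m\sum_j\delta_{t_j}$ overlooks two uses of the $L_2$ structure that do not survive the passage to a finite sum of Diracs. First, Lemma~\ref{L:lower_b} factors $\int_{|x|\ge a\|y\|}(\min(a\|y\|,|y|))^2=\textrm{Prob}(|x|\ge a\|y\|)\int(\min(a\|y\|,|y|))^2$ using the \emph{independence} of $x$ and $y$; with the empirical measure there is no such factoring, so what one actually needs is that many sample indices $j$ satisfy \emph{simultaneously} $|\langle x,v_j\rangle|\ge\tfrac a2\|x\|$ and $|\langle y,v_j\rangle|\ge\tfrac a2\|y\|$. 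Your single-vector estimate only gives each condition on $\ge\tfrac\gamma2 m$ indices, and since $\gamma<1$ the two sets need not intersect at all. Second, your assertion that the argument ``never [relies] on an upper frame bound'' is not right: Lemma~\ref{L:signs} uses $\|\sum\delta_j y_j\|_{L_2}=(\sum\delta_j^2)^{1/2}$, i.e.\ the exact Parseval identity, to absorb the perturbation $z=(\delta_j)$. Its empirical analogue $\tfrac1m\sum_j|\langle z,v_j\rangle|^2$ is uncontrolled without an upper frame bound, and with only two moments on the $y_j$ none is available.

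The paper's proof resolves both points at once by working not with your single-vector set but with the three-vector set
\[
J^z_{x,y}(b,s)=\Big\{j\le m:\ |\langle x,v_j\rangle|\ge b\|x\|,\ |\langle y,v_j\rangle|\ge b\|y\|,\ |\langle z,v_j\rangle|\le s\|z\|,\ \|v_j\|\le 2\gamma^{-1}\sqrt n\Big\},
\]
indexed by pairs $x,y$ with disjoint coordinate supports and the perturbation vector $z=g-\sum_i\vp_i a_i e_i$. The first two conditions build the required intersection directly (population independence of $\langle x,v\rangle$ and $\langle y,v\rangle$ gives $\textrm{Prob}(j\in J^z_{x,y})\ge\gamma^2/2$), while the third is a \emph{local} upper bound on the single vector $z$ that substitutes for the missing global upper frame bound; the stability calculation then splits into the cases $\|y\|\lessgtr 6\gamma^{-1}a^{-1}\|z\|$. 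Your Lipschitz/net machinery could in principle be adapted to make this three-vector estimate uniform, but as written the reduction to a single-vector small-ball estimate is not enough.
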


The condition $\textrm{Prob}(|x|\geq a\|x\|)\geq\gamma$ for all $x\in \overline{span}(y_j)_{j=1}^n$ is useful for calculating stability bounds, but it is difficult to check as one must calculate a probability for every linear combination of $(y_j)_{j=1}^n$.  However, by $(3)\Rightarrow(2)$ in Theorem \ref{T:main}, we have the following corollary with a much simpler hypothesis.

  \begin{cor}\label{C:finite}
Let $A>0$.  Then there exists constants $q_1,q_2,q_3>0$ which depend only on the value $A$ such that for all $n\in\N$ and $m\geq q_1 n\log(n)$ the following holds.
Suppose that $(y_j)_{j=1}^n$ is a sequence of  mean-zero, variance $1$, independent random variables such that $A\leq \|y_j\|_{L_1}$ for all $1\leq j\leq n$.
Let $(v_j)_{j=1}^m$ be an independent sequence of random vectors in $\ell_2^n$ each with the same distribution as $v=(y_1,y_2,...,y_n)$. Then with probability at least $1-\exp\left(-q_2 m \right)$ the frame $(\frac{1}{\sqrt{m}}v_j)_{j=1}^m\cup (e_i)_{i=1}^n$ does $q_3$-stable phase retrieval.  That is, if $T:\ell_2^n\rightarrow\ell_2^{m+n}$ is the analysis operator for the frame $(\frac{1}{\sqrt{m}}v_j)_{j=1}^m\cup (e_i)_{i=1}^n$ then for all $f,g\in\ell_2^{n}$ we have that 
$$
min(\| f-g\|_{\ell_2^n},\| f+g\|_{\ell_2^n}) \leq q_3
\big\| |Tf|-|Tg| \big\|_{\ell_2^{m+n}}. 
$$
\end{cor}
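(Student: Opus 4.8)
The plan is to deduce Corollary~\ref{C:finite} from Theorem~\ref{T:finite}, the only real work being to show that the hypothesis $A\leq\|y_j\|_{L_1}$ (together with variance one) forces the small-ball hypothesis of Theorem~\ref{T:finite} with parameters $a,\gamma$ depending only on $A$.

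First I would record that $(y_j)_{j=1}^n$ is automatically an orthonormal sequence of independent mean-zero random variables: independence gives $\langle y_i,y_j\rangle=\E[y_i]\E[y_j]=0$ for $i\neq j$, and variance one gives $\|y_j\|_{L_2}=1$. Hence $A\leq\|y_j\|_{L_1([0,1])}\leq\|y_j\|_{L_2([0,1])}=1$ for every $j$ (the right-hand inequality being Jensen's inequality on a probability space), so the sequence satisfies condition~(3) of Theorem~\ref{T:main} with constant $A$. To quote Theorem~\ref{T:main} verbatim — it is phrased for infinite sequences — I would extend $(y_j)_{j=1}^n$ to an infinite sequence $(y_j)_{j=1}^\infty$ by adjoining further mean-zero variance-one random variables on $[0,1]$ that are jointly independent and independent of $y_1,\dots,y_n$ (for instance a fresh Rademacher sequence built from binary digit expansions, each term of which has $L_1$ and $L_2$ norm equal to $1\geq A$). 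The enlarged sequence is still orthonormal, independent, mean-zero, and still satisfies condition~(3) with the same constant $A$.

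Next I would invoke the implication $(3)\Rightarrow(2)$ of Theorem~\ref{T:main}. The one point requiring genuine care here is that the proof of that implication (carried out in Section~\ref{S:3} via the Banach-lattice comparison of the $L_1$ and $L_2$ norms on the closed span) produces constants $a,\gamma>0$ depending \emph{only} on the constant $A$ appearing in~(3), and in particular not on $n$ nor on the specific distributions of the $y_j$; one must check that the classical comparison arguments are quantitative in this sense. Granting this, we obtain $a=a(A)>0$ and $\gamma=\gamma(A)>0$ with $\textrm{Prob}(|x|\geq a\|x\|)\geq\gamma$ for all $x\in\overline{span}(y_j)_{j=1}^\infty$; restricting to the subspace $\overline{span}(y_j)_{j=1}^n\subseteq\overline{span}(y_j)_{j=1}^\infty$, the same inequality holds for all $x\in\overline{span}(y_j)_{j=1}^n$, which is exactly the hypothesis of Theorem~\ref{T:finite} for the original finite sequence.

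Finally I would apply Theorem~\ref{T:finite} with these $a,\gamma$: it yields $k_1,k_2>0$ depending only on $a,\gamma$, hence only on $A$, such that for every $n$ and every $m\geq k_1 n\log(n)$, with probability at least $1-\exp(-k_2 m)$ the frame $(\tfrac{1}{\sqrt m}v_j)_{j=1}^m\cup(e_i)_{i=1}^n$ does $(12a^{-1}\gamma^{-1}+1)$-stable phase retrieval. Setting $q_1=k_1$, $q_2=k_2$, and $q_3=12a^{-1}\gamma^{-1}+1$ — all functions of $A$ alone — completes the proof. The main obstacle, as noted, is establishing the dimension- and distribution-independent dependence of $(a,\gamma)$ on $A$ in the previous paragraph; every other step is a direct substitution of constants.
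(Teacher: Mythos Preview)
Your proposal is correct and follows essentially the same route the paper intends: the paper states Corollary~\ref{C:finite} as an immediate consequence of Theorem~\ref{T:finite} via the implication $(3)\Rightarrow(2)$ of Theorem~\ref{T:main}, and that is exactly what you do. One minor simplification: the detour through an infinite extension is unnecessary, since the quantitative content of $(3)\Rightarrow(2)$ is carried by Lemmas~\ref{L:ind_lower} and~\ref{L:bb2}, both of which apply directly to finite sequences and yield explicit constants $a=\tfrac{1}{4}A_1 A$ and $\gamma=a^2$ depending only on $A$; this also dispatches the ``main obstacle'' you flag.
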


\section{Stable phase retrieval for subspaces of $L_2(\R)$}\label{S:2}

We will be considering  subspaces of $L_2(\R)$ of the form $\overline{span} (y_j+1_{(j,j+1]})_{j=1}^\infty$ where $(y_j)_{j=1}^\infty$ is an orthonormal sequence of independent random variables in $L_2([0,1])$ and $(1_{(j,j+1]})_{j=1}^\infty$ is a sequence of indicator functions of disjoint intervals.  Our main goal is to identify properties of the sequence $(y_j)_{j=1}^\infty$ which determine whether or not the subspace $\overline{span} (y_j+1_{(j,j+1]})_{j=1}^\infty\subseteq L_2(\R)$ does stable phase retrieval in $L_2(\R)$.  The full characterization given in Theorem \ref{T:main} will be proven in multiple steps.  The goal of this section is to prove the following theorem which in particular gives the direction $(2)\Rightarrow (1)$ and the furthermore part of Theorem \ref{T:main}.

\begin{thm}\label{T:main2}
Let $(y_j)_{j=1}^\infty$ be an orthonormal sequence of independent random variables in $L_2([0,1])$ such that there exists  $a,\gamma>0$ with $\textrm{Prob}(|x|\geq a\|x\|_{L_2})\geq\gamma$ for all $x\in \overline{span}(y_j)\subseteq L_2([0,1)]$.    Then for all $f,g\in \overline{span} (y_j+1_{(j,j+1]})$, we have that 
$$min(\| f-g\|_{L_2(\R)},\| f+g\|_{L_2(\R)}) \leq 6a^{-1}\gamma^{-1} \big\| |f|-|g| \big\|_{L_2(\R)}.
$$
Thus, the subspace $ \overline{span} (y_j+1_{(j,j+1]})\subseteq L_2(\R)$ does stable phase retrieval.
\end{thm}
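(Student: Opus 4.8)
The plan is to carry everything, by means of the ``tail'' coordinates, down to an inequality on $[0,1]$ that is forced by independence. Set $e_j:=y_j+1_{(j,j+1]}$; since $(y_j)$ is orthonormal and the intervals $(j,j+1]$ are disjoint, $(e_j)$ is orthogonal with $\|e_j\|_{L_2(\R)}=\sqrt2$, so every element of $\overline{span}(e_j)$ has the form $f=\sum_jc_je_j$ with $(c_j)\in\ell_2$, and $f|_{[0,1]}=\sum_jc_jy_j$ while $f|_{(j,j+1]}\equiv c_j$. Because the desired inequality is stable under $L_2$-limits and finitely supported elements are dense, it suffices to treat finitely supported $f=\sum c_je_j$, $g=\sum d_je_j$; replacing $g$ by $-g$ if necessary we may also assume $\|f-g\|_{L_2(\R)}\le\|f+g\|_{L_2(\R)}$, so that $\min(\|f-g\|_{L_2(\R)},\|f+g\|_{L_2(\R)})=\|f-g\|_{L_2(\R)}$. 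Put $u=\tfrac12(f+g)$, $v=\tfrac12(f-g)$, $p_j=\tfrac12(c_j+d_j)$, $q_j=\tfrac12(c_j-d_j)$. The pointwise real identity $|u+v|^2-|u-v|^2=4uv$ gives $\bigl||f|-|g|\bigr|=\bigl||u+v|-|u-v|\bigr|=2\min(|u|,|v|)$ everywhere, hence $\bigl\||f|-|g|\bigr\|_{L_2(\R)}^2=4\int_\R\min(u^2,v^2)$ and $\|f-g\|_{L_2(\R)}^2=4\int_\R v^2$. Since $\int_\R v^2=\int_\R\min(u^2,v^2)+\int_\R(v^2-u^2)_+$, the theorem (with $C=6a^{-1}\gamma^{-1}$) is equivalent to
\[
\int_\R(v^2-u^2)_+\ \le\ (C^2-1)\int_\R\min(u^2,v^2).
\]

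I would then split $\N=P\sqcup Q$ with $Q:=\{j:|q_j|>|p_j|\}$ (equivalently $c_jd_j<0$), and set $u_P=\sum_{j\in P}p_jy_j$, $u_Q=\sum_{j\in Q}p_jy_j$, $v_P=\sum_{j\in P}q_jy_j$, $v_Q=\sum_{j\in Q}q_jy_j$ (finite sums, since $p_j=q_j=0$ off the finite support of $f,g$), so that on $[0,1]$ one has $u=u_P+u_Q$, $v=v_P+v_Q$, while $u|_{(j,j+1]}\equiv p_j$, $v|_{(j,j+1]}\equiv q_j$. The decisive point is that $u_P$ and $v_Q$ are \emph{independent}, lying in the spans of the disjoint blocks $\{y_j\}_{j\in P}$ and $\{y_j\}_{j\in Q}$. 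Applying hypothesis $(2)$ to each of $u_P,v_Q$ and intersecting the two events gives a set of measure $\ge\gamma^2$ on which $|u_P|\ge a\|u_P\|$ and $|v_Q|\ge a\|v_Q\|$ simultaneously, whence $\int_0^1\min(u_P^2,v_Q^2)\ge a^2\gamma^2\min(\|u_P\|^2,\|v_Q\|^2)$. To pass from the independent pair $(u_P,v_Q)$ to $(u,v)$, note $|u|\ge\bigl||u_P|-|u_Q|\bigr|$ and $|v|\ge\bigl||v_Q|-|v_P|\bigr|$ give $\min(|u|,|v|)\ge\bigl(\min(|u_P|,|v_Q|)-\max(|u_Q|,|v_P|)\bigr)_+$, and the elementary bound $(s-t)_+^2\ge\tfrac12s^2-t^2$ then yields pointwise $\min(u^2,v^2)\ge\tfrac12\min(u_P^2,v_Q^2)-u_Q^2-v_P^2$; integrating,
\[
\int_0^1\min(u^2,v^2)\ \ge\ \tfrac12a^2\gamma^2\min\bigl(\|u_P\|^2,\|v_Q\|^2\bigr)-\|u_Q\|^2-\|v_P\|^2 .
\]

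Now the bookkeeping over the tail. By orthonormality $\int_0^1v^2=\sum_jq_j^2=\|v_P\|^2+\|v_Q\|^2$, and since $(v^2-u^2)|_{(j,j+1]}=q_j^2-p_j^2$ is positive exactly for $j\in Q$ while $\min(u^2,v^2)|_{(j,j+1]}=\min(p_j^2,q_j^2)$ equals $q_j^2$ on $P$ and $p_j^2$ on $Q$,
\[
\int_\R(v^2-u^2)_+\ \le\ \|v_P\|^2+2\|v_Q\|^2,\qquad \int_\R\min(u^2,v^2)\ =\ \int_0^1\min(u^2,v^2)+\|v_P\|^2+\|u_Q\|^2 .
\]
The loss terms $-\|u_Q\|^2-\|v_P\|^2$ from the previous display are thus cancelled by $+\|v_P\|^2+\|u_Q\|^2$ here, leaving $\int_\R\min(u^2,v^2)\ge\tfrac12a^2\gamma^2\min(\|u_P\|^2,\|v_Q\|^2)$. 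The normalization $\|f-g\|\le\|f+g\|$ reads $\sum q_j^2\le\sum p_j^2$, i.e.\ $\|u_P\|^2+\|u_Q\|^2\ge\|v_P\|^2+\|v_Q\|^2$; together with $\|u_Q\|^2\le\|v_Q\|^2$ (since $\sum_{j\in Q}p_j^2\le\sum_{j\in Q}q_j^2$) this forces $\|u_P\|^2\ge\|v_P\|^2+\|v_Q\|^2-\|u_Q\|^2$. A short case split — if $\|u_Q\|^2\le\tfrac12\|v_Q\|^2$ then $\min(\|u_P\|^2,\|v_Q\|^2)\ge\tfrac12\|v_Q\|^2$, so $\int_\R\min(u^2,v^2)\ge\tfrac14a^2\gamma^2\|v_Q\|^2$ and also $\ge\|v_P\|^2$; if $\|u_Q\|^2>\tfrac12\|v_Q\|^2$ then $\int_\R\min(u^2,v^2)\ge\|v_P\|^2+\|u_Q\|^2\ge\|v_P\|^2+\tfrac12\|v_Q\|^2$ — combined with $\int_\R(v^2-u^2)_+\le\|v_P\|^2+2\|v_Q\|^2$ gives $\int_\R(v^2-u^2)_+\le(1+8a^{-2}\gamma^{-2})\int_\R\min(u^2,v^2)$, and $1+8a^{-2}\gamma^{-2}\le 36a^{-2}\gamma^{-2}-1=C^2-1$ for $C=6a^{-1}\gamma^{-1}$ (using $a^2\gamma^2\le1$).

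The step I expect to be the genuine obstacle is exactly the transfer from the independent pair $(u_P,v_Q)$ to $(u,v)$: only $u_P$ and $v_Q$ are independent, whereas on $[0,1]$ one really has $u=u_P+u_Q$, $v=v_P+v_Q$, and the cross terms $u_Q,v_P$ can a priori annihilate any lower bound on $\int_0^1\min(u^2,v^2)$ — indeed for Rademacher-type $(y_j)$ that integral can genuinely vanish. The resolution is the (not obvious) exact cancellation: the quantities $\|u_Q\|^2,\|v_P\|^2$ lost in the transfer are precisely the tail contributions to $\int_\R\min(u^2,v^2)$, so they must be retained rather than discarded; together with using the $\|f-g\|\le\|f+g\|$ normalization to keep $\min(\|u_P\|^2,\|v_Q\|^2)$ comparable to $\|v_Q\|^2$, this closes the estimate. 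Everything else — the pointwise identities, the split into $P$ and $Q$, the orthonormality computations — is routine.
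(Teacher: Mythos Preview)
Your proof is correct and reaches the same constant $6a^{-1}\gamma^{-1}$, but it is organized differently from the paper's argument. The paper first uses the tail coordinates to replace $g=\sum b_j e_j$ by $h:=\sum \vp_j a_j e_j$ with $\vp_j=\mathrm{sign}(a_jb_j)$, at cost controlled by $(\sum\delta_j^2)^{1/2}\le\||f|-|g|\|$ (their Lemma~2.2); after this replacement the restrictions of $\tfrac12(f\pm h)$ to $[0,1]$ are \emph{exactly} $x=\sum_{j\in J}a_jy_j$ and $y=\sum_{j\notin J}a_jy_j$, which are independent outright, and a single application of independence (their Lemma~2.3) gives $\||x+y|-|x-y|\|\ge 2a\gamma\min(\|x\|,\|y\|)$ with no cross terms to absorb. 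You instead keep $g$, work with $u=\tfrac12(f+g)$, $v=\tfrac12(f-g)$ and the identity $\bigl||f|-|g|\bigr|=2\min(|u|,|v|)$ throughout, and exploit independence only of the pair $(u_P,v_Q)$; the price is the cross terms $u_Q,v_P$, which you then observe are \emph{exactly} the tail contribution $\|v_P\|^2+\|u_Q\|^2$ to $\int_\R\min(u^2,v^2)$, giving a clean cancellation. The paper's route is shorter and avoids the case split, since after the replacement there is nothing to cancel; your route avoids the preliminary replacement lemma and makes the role of the tail more transparent as a compensator, at the cost of the additional bookkeeping and the $\|u_Q\|^2\lessgtr\tfrac12\|v_Q\|^2$ dichotomy. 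Either way the same sign-based index split $P\sqcup Q$ and the same independence of the two blocks drive the estimate.
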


We first prove the following lemma.

\begin{lem}\label{L:signs}
Let $(y_j)_{j=1}^\infty$ be an orthonormal sequence in $L_2([0,1])$. Let  $f=\sum a_j (y_j+1_{(j,j+1]})$ and $g=\sum b_j (y_j+1_{(j,j+1]})$ in $L_2(\R)$.   For all $j\in\N$ we let $\vp_j= sign(a_jb_j)$ and $\delta_j= b_j  - \vp_j a_j$.  Then, 
$$\left\| |\sum a_j y_j| - |\sum \vp_j a_j y_j|\right\|_{L_2}\leq 2\big\| |f|-|g|\big\|_{L_2}\textrm{ and } \left(\sum \delta_j^2\right)^{1/2}\leq \big\| |f|-|g|\big\|_{L_2}.
$$
\end{lem}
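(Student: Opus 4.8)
The plan is to exploit the fact that, as far as $f$ and $g$ are concerned, the real line splits into the pairwise disjoint pieces $[0,1]$ — where all the $y_j$ live — and the unit intervals $(j,j+1]$, $j\ge 1$ — where the indicators live — so that the $L_2(\R)$ norm decomposes as an orthogonal sum. First I would note that membership of $f,g$ in $\overline{\spa}(y_j+1_{(j,j+1]})$ forces $(a_j),(b_j)\in\ell_2$, since the $y_j+1_{(j,j+1]}$ are mutually orthogonal; hence all the series converge and Parseval applies. Since on $(j,j+1]$ we have $f=a_j1_{(j,j+1]}$ and $g=b_j1_{(j,j+1]}$, while on $[0,1]$ we have $f=\sum a_jy_j$ and $g=\sum b_jy_j$, this gives
$$\big\||f|-|g|\big\|_{L_2(\R)}^2 \;=\; \left\| |\sum a_jy_j| - |\sum b_jy_j| \right\|_{L_2([0,1])}^2 \;+\; \sum_{j=1}^\infty \big(|a_j|-|b_j|\big)^2 ,$$
so each of the two nonnegative terms on the right is at most $\big\||f|-|g|\big\|_{L_2(\R)}^2$.

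Next I would establish the elementary pointwise inequality $|\delta_j|=\big|b_j-\vp_j a_j\big|\le\big||a_j|-|b_j|\big|$ for every $j$ by a short case check on the signs of $a_j,b_j$: if $a_jb_j>0$ then $\vp_j=1$ and $|\delta_j|=|b_j-a_j|=\big||a_j|-|b_j|\big|$; if $a_jb_j<0$ then $\vp_j=-1$ and $|\delta_j|=|a_j+b_j|=\big||a_j|-|b_j|\big|$; and if $a_jb_j=0$ the inequality is immediate. Summing over $j$ and using the displayed identity yields $\big(\sum\delta_j^2\big)^{1/2}\le\big(\sum(|a_j|-|b_j|)^2\big)^{1/2}\le\big\||f|-|g|\big\|_{L_2(\R)}$, which is the second assertion of the lemma.

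For the first assertion I would set $u=\sum a_jy_j$, $v=\sum b_jy_j$ and $w=\sum\vp_j a_jy_j$ in $L_2([0,1])$, and observe that $v-w=\sum\delta_jy_j$, so by orthonormality $\|v-w\|_{L_2([0,1])}=\big(\sum\delta_j^2\big)^{1/2}$. From the pointwise bounds $\big||u|-|w|\big|\le\big||u|-|v|\big|+\big||v|-|w|\big|\le\big||u|-|v|\big|+|v-w|$ and the triangle inequality in $L_2([0,1])$ I obtain
$$\big\||u|-|w|\big\|_{L_2([0,1])}\;\le\;\big\||u|-|v|\big\|_{L_2([0,1])}+\|v-w\|_{L_2([0,1])}\;\le\;\big\||f|-|g|\big\|_{L_2(\R)}+\big\||f|-|g|\big\|_{L_2(\R)},$$
where the first term is controlled by the norm-splitting identity of the first paragraph and the second by the second assertion just proved. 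This is exactly the desired bound $\le 2\big\||f|-|g|\big\|_{L_2}$.

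The argument is short and elementary. I expect the only mildly delicate point to be the sign-case verification of $|\delta_j|\le\big||a_j|-|b_j|\big|$ (and, if one wants to be scrupulous, pinning down a convention for $\sign(0)$, though the inequality holds under any choice); everything else reduces to the orthogonal decomposition of the $L_2(\R)$ norm together with two applications of the ordinary and reverse triangle inequalities.
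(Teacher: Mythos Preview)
Your proof is correct and follows essentially the same route as the paper's: split the $L_2(\R)$ norm over $[0,1]$ and $(1,\infty)$, use the indicator part to control $(\sum\delta_j^2)^{1/2}$, and then on $[0,1]$ write $b_j=\vp_j a_j+\delta_j$ and apply the triangle and reverse-triangle inequalities. Your version is in fact slightly more careful than the paper's, which simply asserts $\sum(|a_j|-|b_j|)^2=\sum\delta_j^2$ without the sign-case verification you supply.
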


\begin{proof}
We first consider the domain $(1,\infty)$.  We have that
\begin{align*}
\big\| |f|-|g|\big\|^2_{L_2(\R)} &\geq \big\| |f|-|g|\big\|^2_{L_2((1,\infty))} \\
&= \left\|\sum |a_j|1_{(j,j+1]}-|b_j|1_{(j,j+1]}\right\|^2_{L_2}\\
&=\sum \left(|a_j|-|b_j|\right)^2\\
&=\sum |\delta_j|^2
\end{align*}
Thus, we have that $(\sum \delta_j^2)^{1/2}\leq \left\| |f|-|g|\right\|_{L_2}$.  We now consider the domain $[0,1]$.  
\begin{align*}
\big\| |f|-|g|\big\|_{L_2(\R)} &\geq \big\| |f|-|g|\big\|_{L_2([0,1])} \\
&=  \left\| |\sum a_j y_j|-|\sum b_j y_j| \right\|_{L_2} \\
&=\left\| |\sum a_j y_j|-|\sum (\vp_j a_j+\delta_j)y_j| \right\|_{L_2}\\
&\geq \left\| |\sum a_j y_j|-|\sum \vp_j a_j y_j| \right\|_{L_2} -\left\|\sum \delta_j y_j \right\|_{L_2}\\
&\geq \left\| |\sum a_j y_j|-|\sum \vp_j a_j y_j| \right\|_{L_2} -\left(\sum \delta_j^2\right)^{1/2}  
\end{align*}
The last inequality gives that
$$\left\| |\sum a_j y_j| - |\sum \vp_j a_j y_j|\right\|_{L_2} \leq \big\| |f|-|g|\big\|_{L_2}+\left(\sum \delta_j^2\right)^{1/2}\leq 2 \big\| |f|-|g|\big\|_{L_2}.
$$
\end{proof}

\begin{lem}\label{L:lower_b}
Let $Y\subseteq L_2([0,1])$ be such that there exists $a,\gamma>0$ so that $\textrm{Prob}(|x|\geq a\|x\|_{L_2})\geq\gamma$ for all $x\in Y$.  
Let $x,y\in Y$ be independent  and $\|x\|\geq \|y\|$.  Then,
$$2a\gamma\|y\|\leq \big\| |x+y|- |x-y| \big\|_{L_2}
$$
\end{lem}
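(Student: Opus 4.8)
The plan is to first turn $\big\||x+y| - |x-y|\big\|_{L_2}$ into the $L_2$-norm of a pointwise minimum, and then invoke the probabilistic hypothesis on \emph{both} $x$ and $y$ together with their independence.

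First I would record the elementary scalar identity $\big||s+u| - |s-u|\big| = 2\min(|s|,|u|)$, valid for all $s,u\in\R$; this is a short case check on the signs of $s$ and $u$ (if $s,u$ have the same sign and $|s|\geq|u|$ the left side is $(|s|+|u|)-(|s|-|u|)=2|u|$, and the remaining cases are symmetric). Applying it pointwise to the functions $x,y$ and integrating over $[0,1]$ gives
$$\big\||x+y| - |x-y|\big\|_{L_2}^2 = 4\int_0^1 \min\big(|x|,|y|\big)^2.$$

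Next I would bound this integral from below on a good event. Put $E=\{|x|\geq a\|x\|_{L_2}\}$ and $F=\{|y|\geq a\|y\|_{L_2}\}$; by hypothesis $\textrm{Prob}(E)\geq\gamma$ and $\textrm{Prob}(F)\geq\gamma$. Since $\|x\|_{L_2}$ and $\|y\|_{L_2}$ are constants, $E$ and $F$ are preimages under $x$ and $y$ of fixed measurable subsets of $\R$, so independence of $x$ and $y$ gives $\textrm{Prob}(E\cap F)\geq\gamma^2$. On $E\cap F$ we have $\min(|x|,|y|)\geq \min(a\|x\|_{L_2},a\|y\|_{L_2})=a\|y\|_{L_2}$, using $\|x\|_{L_2}\geq\|y\|_{L_2}$, whence
$$\int_0^1 \min\big(|x|,|y|\big)^2 \;\geq\; \int_{E\cap F}\min\big(|x|,|y|\big)^2 \;\geq\; a^2\|y\|_{L_2}^2\,\textrm{Prob}(E\cap F)\;\geq\; a^2\gamma^2\|y\|_{L_2}^2.$$
Combining the last two displays yields $\big\||x+y|-|x-y|\big\|_{L_2}^2 \geq 4a^2\gamma^2\|y\|_{L_2}^2$, and taking square roots gives the claim.

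The only step that deserves care is the independence of the events $E$ and $F$, which is exactly where the hypothesis that $x$ and $y$ are independent is used (and where one uses that the thresholds $a\|x\|_{L_2}$, $a\|y\|_{L_2}$ are deterministic); beyond that the argument is entirely routine, so I do not foresee a genuine obstacle.
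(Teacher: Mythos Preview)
Your proof is correct and follows essentially the same approach as the paper: both start from the pointwise identity $\big||s+u|-|s-u|\big|=2\min(|s|,|u|)$ and then bound the integral from below on the event where both $|x|$ and $|y|$ exceed their respective thresholds, using independence to factor the probability. The only cosmetic difference is that the paper restricts to the two events in two successive steps (first conditioning on $|x|\geq a\|y\|$, then on $|y|\geq a\|y\|$), whereas you intersect $E$ and $F$ at once; your organization is arguably tidier but the content is the same.
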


\begin{proof}
We have that 
\begin{align*}
\left\| |x+y|- |x-y| \right\|^2_{L_2} &= \int (2 \min(|x|,|y|))^2\\
&\geq 4 \int_{|x|\geq a\|y\|} ( \min(a\|y\|,|y|))^2\\
&= 4 \textrm{Prob}(|x|\geq a\|y\|)\int ( \min(a\|y\|,|y|))^2\qquad\left(\textrm{as $x$ and $y$ are independent}\right) \\
&\geq 4 \textrm{Prob}(|x|\geq a\|x\|)\int ( \min(a\|y\|,|y|))^2\qquad\left(\textrm{as }\|x\|\geq \|y\|\right) \\
&\geq 4 \textrm{Prob}(|x|\geq a\|x\|)\int_{|y|\geq a\|y\|} ( a\|y\|)^2 \\
&= 4 \textrm{Prob}(|x|\geq a\|x\|) \textrm{Prob}(|y|\geq a\|y\|) ( a\|y\|)^2 \\
&\geq 4 \gamma^2  a ^2 \|y\|^2.
\end{align*}

\end{proof}

We are now prepared to prove that stable phase retrieval is possible for infinite dimensional subspaces of  $L_2(\R)$. 

\begin{proof}[Proof of Theorem \ref{T:main2}]
We have that $(y_j)_{j=1}^\infty$ is an orthonormal sequence of independent random variables in $L_2([0,1])$ such that there exists  $a,\gamma>0$ with $\textrm{Prob}(|x|\geq a\|x\|_{L_2})\geq\gamma$ for all $x\in \overline{span}(y_j)\subseteq L_2([0,1)]$.  By Markov's inequality we have that $a\gamma\|x\|_{L_2}\leq \|x\|_{L_1}$ for all $x\in \overline{span}(y_j)\subseteq L_2([0,1)]$.  Hence it is necessary that $a\gamma\leq 1$.

Let $f,g\in\overline{span}(y_j+1_{(j,j+1]})$ with $f=\sum a_j (y_j+1_{(j,j+1]})$ and $g=\sum b_j (y_j+1_{(j,j+1]})$.   For all $j\in\N$ we let $\vp_j= sign(a_jb_j)$ and $\delta_j= b_j  - \vp_j a_j$.  
We let $J=\{j\in\N\,;\, \vp_j=1\}$, $x=\sum_{j\in J}a_j y_j$, and $y=\sum_{j\in J^c} a_j y_j$.  Note that $\sum a_j y_j= x+y$ and $\sum \vp_j a_j y_j=x-y$. Furthermore, $x$ and $y$ are independent as $(y_j)_{j=1}^\infty$ is a sequence of independent random variables. 

  Without loss of generality, we may assume that $\|x\|\geq \|y\|$.  We will now prove that $\| f-g\|_{L_2(\R)} \leq C \| |f|-|g| \|_{L_2(\R)}$ for $C=6a^{-1}\gamma^{-1}$.  (If we had instead assumed that $\|x\|\leq \|y\|$ then we would prove that $\|f+g\|_{L_2(\R)} \leq C \| |f|-|g| \|_{L_2(\R)}$.)

  We now have that
\begin{align*}
6\|& |f|-|g| \|_{L_2}\geq 2 \left\| |\sum a_j y_j| - |\sum \vp_j a_j y_j|\right\|_{L_2}+2\left(\sum \delta_j^2\right)^{1/2} \qquad\big(\textrm{by Lemma \ref{L:signs}}\big)\\
&=2\left\| |x+y| - |x-y|\right\|_{L_2}+2\left(\sum \delta_j^2\right)^{1/2}\\
&\geq 4a\gamma\| y\|_{L_2}+2\left(\sum \delta_j^2\right)^{1/2}\qquad \big(\textrm{by Lemma \ref{L:lower_b}}\big)\\
&=a\gamma\big(\|(x+y)-(x-y)\|_{L_2}+\|(x+y)-(x-y)\|_{L_2}\big)+2\left(\sum \delta_j^2\right)^{1/2}\\
&= a\gamma\left(\left\| \sum a_j y_j - \sum \vp_j a_j y_j\right\|_{L_2} +\left \| \sum a_j 1_{(j,j+1]} - \sum \vp_j a_j 1_{(j,j+1]}\right\|_{L_2} \right)+2\left(\sum \delta_j^2\right)^{1/2}\\
&\geq a\gamma\left(\left\|  \sum a_j y_j - \sum (\vp_j a_j+\delta_j) y_j\right\|_{L_2}+\left\| \sum a_j 1_{(j,j+1]} - \sum (\vp_j a_j+\delta_j)1_{(j,j+1]}\right\|_{L_2}\right) \qquad \big(\textrm{as  $a\gamma\leq 1$.}\big)\\
&=a\gamma\left(\left\|  \sum a_j y_j - \sum b_j y_j\right\|_{L_2}+\left\| \sum a_j 1_{(j,j+1]} - \sum b_j1_{(j,j+1]}\right\|_{L_2}\right) \\
&\geq a\gamma\left\|  \sum a_j (y_j+1_{(j,j+1]}) - \sum b_j (y_j+1_{(j,j+1]})\right\|_{L_2}\\
&= a\gamma\left\|  f - g\right\|_{L_2}
\end{align*}

Thus, $\| f-g\|_{L_2(\R)} \leq 6a^{-1}\gamma^{-1} \| |f|-|g| \|_{L_2(\R)}.$
\end{proof}

\section{Comparing $L_p$ norms and phase retrieval}\label{S:3}

We have been considering an indepedendent orthonormal sequence  $(y_j)_{j=1}^\infty$ in $L_2([0,1])$ and a subspace $Y:=\overline{span}(y_j) \subseteq L_2([0,1]$.  As $[0,1]$ is a probability space, we have for all $x\in Y$ that $\|x\|_{L_2([0,1])}\geq \|x\|_{L_p([0,1])}$ for $1\leq p< 2$, and $\|x\|_{L_p([0,1])}\geq \|x\|_{L_2([0,1])}$ for  $2<p< \infty$.  In this section we will be considering the situation where for some $2<p<\infty$ there exists $1\leq B<\infty$ such that $\|x\|_{L_p([0,1])}\leq B \|x\|_{L_2([0,1])}$ for all $x\in Y$, or for some $1\leq p<2$ there exists $0<A\leq 1$ such that $A\|x\|_{L_2([0,1])}\leq\|x\|_{L_p([0,1])}$ for all $x\in Y$.

Although our results in this section will be infinite dimensional, comparing different $L_p$ norms has interesting connections with phase retrieval in finite dimensions.
In particular,  if $(x_j)_{j=1}^n$ is a spherical design of index p for a finite dimensional Hilbert space $H$ then one has for a suitable constant $C$ that $(\sum_{j=1}^n |\langle x,x_j\rangle|^2)^{1/2}=C (\sum_{j=1}^n |\langle x,x_j\rangle|^p)^{1/p}$ for all $x\in H$.  Thus, if $T:H \rightarrow \ell_2^n$ is the analysis operator for $(x_j)_{j=1}^n$,  we have that $\|y\|_{\ell_2^n}=C \|y\|_{\ell_p^n}$ for every vector $y\in T(H)$. Furthermore, every subspace of $\ell_p^n$ which is isometric to a Hilbert space can be constructed in this way using a spherical design \cite{LV}.  In \cite{GKK}, the authors show that choosing random subsets of spherical designs gives uniformly stable phase retrieval in finite dimension.

We will give three lemmas which each guarantee that a sequence $(y_j)$ satisfies that there exists 
$a,\gamma>0$ such that 
$\textrm{Prob}(|x|\geq a\|x\|_{L_2})\geq\gamma$ for all $x\in span(y_j)$.  Thus, by Theorem \ref{T:main2}, if $(y_j)$ is
an independent orthonormal sequence in $L_2([0,1])$ which satisfies any of these three lemmas then $\overline{span}(y_j+1_{(j,j+1]})\subseteq L_2(\R)$ does stable phase retrieval in $L_2(\R)$.  

\begin{lem}\label{L:bb}
Let $x\in L_2([0,1])$ be such that there exists $2<p<\infty$ and a constant $B\geq1$ with $\|x\|_{L_p([0,1])}\leq B \|x\|_{L_2([0,1)]}$. 
Let $0<a<1$.  Then for $\gamma=B^{2p/(2-p)}(1-a^2)^{p/(p-2)}$ we have that 
$$\textrm{Prob}(|x|\geq a\|x\|_{L_2})\geq\gamma.
$$
\end{lem}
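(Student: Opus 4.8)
The plan is to read this as a Paley--Zygmund type estimate for the nonnegative random variable $Z:=|x|^2$, using the hypothesis as a higher-moment bound on $Z$. If $\|x\|_{L_2([0,1])}=0$ then $|x|\geq a\|x\|_{L_2}$ holds a.e.\ and the probability in question equals $1\geq\gamma$, so we may assume $\|x\|_{L_2([0,1])}\neq 0$; since both the event $\{|x|\geq a\|x\|_{L_2}\}$ and the constant $\gamma$ are unchanged when $x$ is rescaled, we may normalize $\|x\|_{L_2([0,1])}=1$. Set $q:=p/2>1$ and let $q':=q/(q-1)=p/(p-2)$ be its conjugate exponent. In this normalization the hypothesis $\|x\|_{L_p}\leq B$ rewrites as $\int_0^1 |x|^p\leq B^p$, that is, $\|Z\|_{L_q([0,1])}=\big(\int_0^1 Z^q\big)^{1/q}\leq B^2$, while $\int_0^1 Z=\int_0^1|x|^2=1$.

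\textbf{Main step: truncate and apply H\"older.} Let $A:=\{|x|\geq a\}=\{Z\geq a^2\}$. Splitting the integral of $Z$ over $A$ and $A^c$, and using $Z< a^2$ on $A^c$,
$$1=\int_0^1 Z=\int_A Z+\int_{A^c}Z\leq \int_A Z+a^2,$$
so that $\int_A Z\geq 1-a^2$. Now apply H\"older's inequality with exponents $q$ and $q'$ to $\int_A Z=\int_0^1 Z\,1_A$, together with $\|Z\|_{L_q}\leq B^2$:
$$1-a^2\leq \int_A Z\leq \|Z\|_{L_q}\,\big(\textrm{Prob}(A)\big)^{1/q'}\leq B^2\,\big(\textrm{Prob}(A)\big)^{1/q'}.$$

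\textbf{Conclusion.} Dividing and raising to the power $q'=p/(p-2)>0$ gives
$$\textrm{Prob}\big(|x|\geq a\|x\|_{L_2}\big)=\textrm{Prob}(A)\geq\left(\frac{1-a^2}{B^2}\right)^{p/(p-2)}=B^{2p/(2-p)}(1-a^2)^{p/(p-2)}=\gamma,$$
which is exactly the claimed bound (and note $B^{-2p/(p-2)}=B^{2p/(2-p)}$, $(1-a^2)\in(0,1)$, so indeed $\gamma\in(0,1)$). There is no genuine obstacle here; the only points meriting care are the degenerate case $x=0$ and the exponent bookkeeping, namely that $q=p/2$ forces $q'=p/(p-2)$ and that $\|x\|_{L_p}^p\leq B^p\|x\|_{L_2}^p$ is the same statement as $\|\,|x|^2\|_{L_{p/2}}\leq B^2\|x\|_{L_2}^2$.
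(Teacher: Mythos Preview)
Your proof is correct and is essentially the same argument as the paper's: both normalize $\|x\|_{L_2}=1$, split $\int |x|^2$ over $\{|x|\geq a\}$ and its complement, bound the complement by $a^2$, and apply H\"older with exponent $q=r=p/2$ (so $q'=r'=p/(p-2)$) to isolate $\textrm{Prob}(|x|\geq a)$. Your framing via $Z=|x|^2$ and the Paley--Zygmund label is just a repackaging of the same computation; the only substantive addition is that you explicitly dispose of the trivial case $x=0$, which the paper leaves implicit.
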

\begin{proof}
Let $0<a<1$ and $x\in Y$.
By scaling, we may assume that $\|x\|_{L_2}=1$.   Let $r=p2^{-1}$.  Note that $1< r< p$ as $2<p$.  Let $r'$ be the dual exponent to $r$.  That is, $1/r+1/r'=1$.
We now calculate, 

\begin{align*}
1=\|x\|_{L_2}^2
&= \left(\int_{|x|\geq a} |x|^2\right)+\left(\int_{|x|< a} |x|^2\right)\\
&\leq \left(\int 1_{|x|\geq a} |x|^2\right)+ a^2\\
&\leq \left(\int 1_{|x|\geq a}\right)^{1/r'}\left(\int |x|^{2r}\right)^{1/r}+a^2\qquad\big(\textrm{by Hölder's inequality}\big)\\
&= \textrm{Prob}(|x|\geq a)^{1/r'}\left(\int |x|^{p}\right)^{2/p}+  a^2\\
&= \textrm{Prob}(|x|\geq a)^{1-2/p}\|x\|^2_{L_p}+ a^2\\
&\leq \textrm{Prob}(|x|\geq a)^{1-2/p}B^2+a^2
\end{align*}

As the exponent $1-2/p$ is positive, solving for $\textrm{Prob}(|x|\geq a)$ gives,
$$\textrm{Prob}(|x|\geq a)\geq (B^{-2}(1-a^2))^{1/(1-2/p)}=B^{2p/(2-p)}(1-a^2)^{p/(p-2)}.
$$

Thus, we have proven the lemma  for $\gamma=B^{2p/(2-p)}(1-a^2)^{p/(p-2)}$ where  $2<p<\infty$.

\end{proof}

In Lemma \ref{L:bb} we used $2<p<\infty$.  We now consider the case $1\leq p<2$.  The proof of the following Lemma is the same as that of Lemma \ref{L:bb} except that we use Hölder's inequality for $r=2p^{-1}$ instead of $r=p2^{-1}$.   The idea of Lemma \ref{L:bb} is that vectors can't be ``too peaky'' because then the $L_p$-norm would be too big for $p>2$, and in the following lemma we prove that vectors can't be ``too peaky'' because then the $L_p$-norm would be too small for $1\leq p<2$. 

\begin{lem}\label{L:bb2}
Let $x\in L_p([0,1])$ be such that there exists $1\leq p<2$ and a constant $A\leq1$ with $\|x\|_{L_p([0,1])}\geq A \|x\|_{L_2([0,1)]}$.  
Let $0<a<A$.  Then for $\gamma=(A^{p}-a^p)^{2/(2-p)}$ we have that
$$\textrm{Prob}(|x|\geq a\|x\|_{L_2})\geq\gamma.
$$
\end{lem}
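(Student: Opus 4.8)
The plan is to mimic the proof of Lemma \ref{L:bb} almost verbatim, interchanging the roles of $2$ and $p$ in the choice of Hölder exponent. By homogeneity I would first normalize so that $\|x\|_{L_2([0,1])}=1$; the hypothesis then reads $\int_0^1|x|^p\geq A^p$, and the goal becomes $\textrm{Prob}(|x|\geq a)\geq (A^p-a^p)^{2/(2-p)}$.

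First I would split $\int|x|^p$ over $\{|x|\geq a\}$ and $\{|x|<a\}$. On the latter set $\int_{|x|<a}|x|^p\leq a^p$, so $A^p-a^p\leq\int_{|x|\geq a}|x|^p$; this is where the assumption $a<A$ is used, to keep the left-hand side positive. Next I would apply Hölder's inequality to $\int 1_{\{|x|\geq a\}}|x|^p$ with exponent $r=2p^{-1}>1$ (legitimate since $1\leq p<2$) and conjugate exponent $r'=2/(2-p)$, obtaining
$\int_{|x|\geq a}|x|^p\leq\textrm{Prob}(|x|\geq a)^{1/r'}\big(\int|x|^{pr}\big)^{1/r}=\textrm{Prob}(|x|\geq a)^{(2-p)/2}\big(\int|x|^2\big)^{p/2}=\textrm{Prob}(|x|\geq a)^{(2-p)/2}$,
using the normalization in the last equality. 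Combining with the previous bound gives $A^p-a^p\leq\textrm{Prob}(|x|\geq a)^{(2-p)/2}$, and raising both sides to the positive power $2/(2-p)$ yields the claim with $\gamma=(A^p-a^p)^{2/(2-p)}$.

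There is essentially no real obstacle beyond bookkeeping: the only points requiring care are computing the conjugate exponent correctly and noting the asymmetry with Lemma \ref{L:bb} — there one discards the small values via $|x|^2\leq a^2$ on $\{|x|<a\}$, whereas here one discards them via $|x|^p\leq a^p$, reflecting that for $p<2$ it is the $L_p$ norm (rather than the $L_p$ norm for $p>2$) that controls ``peakiness.'' Combined with Theorem \ref{T:main2}, this shows that any independent orthonormal sequence $(y_j)$ in $L_2([0,1])$ whose span satisfies a uniform reverse inequality $\|x\|_{L_p}\geq A\|x\|_{L_2}$ for some $1\leq p<2$ produces a subspace $\overline{span}(y_j+1_{(j,j+1]})\subseteq L_2(\R)$ that does stable phase retrieval.
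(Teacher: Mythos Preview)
Your proposal is correct and follows essentially the same argument as the paper: normalize to $\|x\|_{L_2}=1$, split $\int|x|^p$ over $\{|x|\geq a\}$ and $\{|x|<a\}$, bound the small part by $a^p$, apply H\"older with exponent $r=2/p$ and conjugate $r'=2/(2-p)$ to the large part, and rearrange. The only cosmetic difference is that the paper chains the inequalities into a single display, whereas you isolate $A^p-a^p\leq\int_{|x|\geq a}|x|^p$ before applying H\"older.
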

\begin{proof}
By scaling, we may assume that $\|x\|_{L_2}=1$.   Let $r=2p^{-1}$.  Note that $1< r\leq 2$ as $1\leq p<2$.  Let $r'$ be the dual exponent to $r$.  That is, $1/r+1/r'=1$.
We now calculate,

\begin{align*}
A^{p}&=A^{p}\|x\|_{L_2}^p\\
&\leq \|x\|_{L_p}^p\\
&= \left(\int_{|x|\geq a} |x|^p\right)+\left(\int_{|x|< a} |x|^p\right)\\
&\leq \left(\int 1_{|x|\geq a} |x|^p\right)+ a^p\\
&\leq \left(\int 1_{|x|\geq a}\right)^{1/r'}\left(\int |x|^{pr}\right)^{1/r}+a^p\qquad\big(\textrm{by Hölder's inequality}\big)\\
&= \textrm{Prob}(|x|\geq a)^{1/r'}\left(\int |x|^{2}\right)^{p/2}+  a^p\\
&= \textrm{Prob}(|x|\geq a)^{1-p/2}\|x\|^p_{L_2}+ a^p\\
&= \textrm{Prob}(|x|\geq a)^{1-p/2}+a^p
\end{align*}

As the exponent $1-p/2$ is positive,  solving for $\textrm{Prob}(|x|\geq a)$ gives,
$$\textrm{Prob}(|x|\geq a)\geq (A^{p}-a^p)^{1/(1-p/2)}=(A^{p}-a^p)^{2/(2-p)}.
$$

Thus, we have proven the lemma  for $\gamma=(A^{p}-a^p)^{2/(2-p)}$.

\end{proof}

We say that a  sequence $(y_j)_{j=1}^\infty$ in a real Banach space $X$ is $C$-{\em unconditional} if for any finite sequence of scalars $(a_j)_{j=1}^N$ and any choice of signs $\vp_j=\pm1$ we have that 
$$\Big\|\sum_{j=1}^N  \vp_j a_j y_j\Big\|_X\leq C \Big\|\sum_{j=1}^N a_j y_j\Big\|_X.
$$
We say $(y_j)_{j=1}^\infty$ is $C$-{\em suppression unconditional} if for any finite sequence of scalars $(a_j)_{j=1}^N$ and any $J\subseteq\{1,...,N\}$ we have that 
$$\Big\|\sum_{j\in J} a_j y_j\Big\|_X\leq C \Big\|\sum_{j=1}^N a_j y_j\Big\|_X.
$$
It follows from the triangle inequality that if $(y_j)_{j=1}^\infty$ is $C$-suppression unconditional then it is $2C$-unconditional.  Indeed,
$$\Big\|\sum_{i=1}^N \vp_j a_j y_j\Big\|\leq \Big\|\sum_{\vp_j=1} a_j y_j\Big\|+\Big\|\sum_{\vp_j=-1} a_j y_j\Big\|\leq 2C\Big\|\sum_{j=1}^N a_j y_j\Big\|.
$$

\begin{lem}\label{L:unc}
Let $(y_j)_{j=1}^\infty$ be a sequence of independent mean zero random variables in $L_2([0,1])$.  Then $(y_j)_{j=1}^\infty$ is 1-suppression unconditional as a sequence in $L_1([0,1])$.  Hence, $(y_j)_{j=1}^\infty$ is 2-unconditional as a sequence in $L_1([0,1])$. 
\end{lem}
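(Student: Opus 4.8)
The plan is to deduce $1$-suppression unconditionality in $L_1([0,1])$ from the elementary fact that averaging over an independent, mean-zero block of coordinates is a contraction on $L_1$. Fix $N\in\N$, scalars $(a_j)_{j=1}^N$, and a subset $J\subseteq\{1,\dots,N\}$. Write $x=\sum_{j\in J}a_jy_j$ and $z=\sum_{j\in\{1,\dots,N\}\setminus J}a_jy_j$, so that $\sum_{j=1}^N a_jy_j=x+z$. Since $x\in\spa(y_j)_{j\in J}$ and $z\in\spa(y_j)_{j\in J^c}$, the defining property of independence for the sequence $(y_j)$ says precisely that $x$ and $z$ are independent random variables; hence the joint distribution of $(x,z)$ on $\R^2$ is the product $\mu_x\otimes\mu_z$ of the two marginal laws. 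Moreover $\E z=\sum_{j\notin J}a_j\,\E y_j=0$, because each $y_j$ is mean zero.

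The key step is then a one-line application of Jensen's inequality (equivalently, the triangle inequality for integrals): for $\mu_x$-almost every $s$,
$$\int_{\R}|s+t|\,d\mu_z(t)\ \ge\ \Big|\,s+\int_{\R}t\,d\mu_z(t)\,\Big|=|s+\E z|=|s|,$$
and therefore, using the product structure of the law of $(x,z)$,
$$\|x+z\|_{L_1([0,1])}=\int_{\R}\!\int_{\R}|s+t|\,d\mu_z(t)\,d\mu_x(s)\ \ge\ \int_{\R}|s|\,d\mu_x(s)=\|x\|_{L_1([0,1])}.$$
(An equivalent phrasing: $x=\E\big(x+z\mid\sigma(y_j:j\in J)\big)$ since $z$ is independent of that $\sigma$-algebra and has mean zero, and conditional expectation is an $L_1$-contraction.) This is exactly $\big\|\sum_{j\in J}a_jy_j\big\|_{L_1}\le\big\|\sum_{j=1}^N a_jy_j\big\|_{L_1}$, which is $1$-suppression unconditionality of $(y_j)$ in $L_1([0,1])$.

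Finally, the $2$-unconditionality follows immediately by the triangle-inequality argument recorded just before the statement: for any signs $\vp_j=\pm1$, split $\{1,\dots,N\}$ into $J=\{j:\vp_j=1\}$ and its complement and apply the suppression bound to each block, giving
$$\Big\|\sum_{j=1}^N\vp_j a_jy_j\Big\|_{L_1}\le\Big\|\sum_{j\in J}a_jy_j\Big\|_{L_1}+\Big\|\sum_{j\notin J}a_jy_j\Big\|_{L_1}\le 2\Big\|\sum_{j=1}^N a_jy_j\Big\|_{L_1}.$$

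As for difficulties, there is no substantial obstacle: the only points needing care are (i) invoking the paper's definition of independence to pass to the product law of $(x,z)$, so that the $L_1$ norm of $x+z$ becomes an iterated integral, and (ii) noting that it is precisely the mean-zero hypothesis that yields $\int t\,d\mu_z(t)=0$, which is the crux of the estimate — without it the statement fails (e.g. a nonzero constant function $y_j$).
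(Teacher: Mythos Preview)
Your proof is correct. The paper reaches the same inequality by a sign-function/duality trick rather than Jensen: it sets $f_J=\sign\big(\sum_{j\in J}a_jy_j\big)$, observes that $f_J$ is independent of $z=\sum_{j\notin J}a_jy_j$ so that $\int f_J z=0$, and then writes
\[
\Big\|\sum_{j\in J}a_jy_j\Big\|_{L_1}=\int f_J\cdot x=\int f_J\cdot(x+z)\le\Big\|\sum_{j=1}^N a_jy_j\Big\|_{L_1}
\]
using $|f_J|\le1$. Your argument is the ``primal'' counterpart: you average the convex function $|\cdot|$ over $z$ first and invoke Jensen (or, equivalently, the $L_1$-contractivity of the conditional expectation $\E(\,\cdot\mid\sigma(y_j:j\in J))$). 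Both proofs are one line once independence and $\E z=0$ are in hand, and neither is more general here; your conditional-expectation phrasing has the mild advantage that it makes transparent why the constant is exactly $1$.
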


\begin{proof}
Let $(a_j)_{j=1}^N$ be a finite sequence of scalars and let $J\subseteq\{1,...,N\}$.  For all $t\in[0,1]$ we let $f_J(t)=sign(\sum_{j\in J} a_j y_j(t))$.  Note that $f_J$ is a random variable which is independent to $\sum_{j\not\in J } a_j y_j$.  This implies that $\int_0^1 f_J(t)\sum_{j\not\in J} a_j y_j(t) dt=0$ as each $y_j$ is mean zero.
\begin{align*}
\big\|\sum_{j\in J} a_j y_j\big\|_{L_1}&= \int_0^1 \big|\sum_{j\in J} a_j y_j(t)\big|\, dt\\
&= \int_0^1 f_J(t)\sum_{j\in J} a_j y_j(t)\, dt\\
&= \int_0^1 f_J(t)\sum_{j=1}^N a_j y_j(t)\, dt\quad\big(\textrm{as $\int_0^1 f_J(t)\sum_{j\not\in J} a_j y_j(t) dt=0$}\big)\\
&\leq \int_0^1 \Big|\sum_{j=1}^N a_j y_j(t)\Big|\, dt= \Big\|\sum_{j=1}^N a_j y_j\Big\|_{L_1}
\end{align*}
Thus, the sequence $(y_j)_{j=1}^\infty$ is 1-suppression unconditional in $L_1([0,1])$.

\end{proof}

The {\em Rademacher sequence} $(r_j)_{j=1}^\infty$  is an independent sequence of mean-zero $\pm1$ random variables on $[0,1]$.  This sequence can be very useful when studying unconditionality in Banach spaces, and we use the Rademacher sequence in the following case of Khintchine's inequality. 

\begin{thm}[Khintchine's Inequality]
There exists a constant $A_1>0$ such that for all $N\in\N$ and all scalars $(a_j)_{j=1}^N$, we have that
$\int_0^1 |\sum_{j=1}^N a_j r_j(t)| dt\geq A_{1} (\sum_{j=1}^N |a_j|^2)^{1/2}.
$
\end{thm}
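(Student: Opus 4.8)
The plan is to prove this by the classical fourth--moment argument, which has the advantage of being entirely self--contained. By homogeneity we may assume $\sum_{j=1}^N a_j^2 = 1$; put $X = \sum_{j=1}^N a_j r_j \in L_2([0,1])$. Since $(r_j)_{j=1}^\infty$ is orthonormal in $L_2([0,1])$ we have $\|X\|_{L_2} = 1$, so the inequality to be proved is exactly $\|X\|_{L_1} \geq A_1$ for a universal constant $A_1 > 0$.

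The first step is an upper bound for the fourth moment of $X$. Expanding
\[
\int_0^1 X^4\, dt = \sum_{i,j,k,\ell=1}^N a_i a_j a_k a_\ell \int_0^1 r_i r_j r_k r_\ell\, dt ,
\]
independence of the $r_j$ together with $\int_0^1 r_j\, dt = 0$ and $r_j^2 \equiv 1$ shows that $\int_0^1 r_i r_j r_k r_\ell\, dt$ vanishes unless each index among $i,j,k,\ell$ occurs an even number of times, in which case it equals $1$. Collecting the two surviving index patterns (one index appearing four times, or two indices appearing twice each, with multinomial coefficient $6$) gives
\[
\int_0^1 X^4\, dt = \sum_{j} a_j^4 + 3\sum_{i\neq j} a_i^2 a_j^2 \leq 3\Big(\sum_j a_j^2\Big)^2 = 3 ,
\]
so $\|X\|_{L_4} \leq 3^{1/4}$.

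The second step is interpolation. Applying Hölder's inequality with exponents $3/2$ and $3$ to the splitting $|X|^2 = |X|^{2/3}\,|X|^{4/3}$ yields
\[
1 = \|X\|_{L_2}^2 = \int_0^1 |X|^{2/3}|X|^{4/3}\, dt \leq \Big(\int_0^1 |X|\, dt\Big)^{2/3}\Big(\int_0^1 |X|^4\, dt\Big)^{1/3} = \|X\|_{L_1}^{2/3}\,\|X\|_{L_4}^{4/3}.
\]
Combining with $\|X\|_{L_4}\leq 3^{1/4}$ gives $1 \leq \|X\|_{L_1}^{2/3}\, 3^{1/3}$, hence $\|X\|_{L_1} \geq 3^{-1/2}$; undoing the normalization, $\int_0^1 |\sum_j a_j r_j|\, dt \geq \tfrac{1}{\sqrt 3}\big(\sum_j |a_j|^2\big)^{1/2}$, so the theorem holds with $A_1 = 1/\sqrt{3}$.

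There is no serious obstacle here: the only points requiring any care are the bookkeeping in the fourth--moment expansion (identifying which monomials $r_i r_j r_k r_\ell$ have nonzero average and with what multiplicity) and the slightly unusual use of Hölder's inequality, where it is invoked to bound $\|X\|_{L_2}$ \emph{above} by a geometric mean of $\|X\|_{L_1}$ and $\|X\|_{L_4}$ rather than in its customary direction. The constant $A_1 = 1/\sqrt3$ produced this way is not optimal, but optimality is irrelevant for the applications in this paper.
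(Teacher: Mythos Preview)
Your proof is correct. The paper itself does not prove Khintchine's inequality; it simply states it as a classical result and uses only the existence of the constant $A_1$ (its explicit value never matters downstream). Your fourth--moment argument is the standard self--contained route and yields the concrete, though non--optimal, constant $A_1=1/\sqrt{3}$, which is more than adequate for the applications in Lemma~\ref{L:ind_lower} and Lemma~\ref{L:KL1}.
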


The following lemma is a useful technique which is widely used in the study of Banach lattices.

\begin{lem}\label{L:ind_lower}
Let $(y_j)_{j=1}^\infty$ be an orthonormal sequence of independent mean zero random variables in $L_2([0,1])$ such that there exists $c>0$ with $\|y_j\|_{L_1}\geq c$ for all $j\in \N$.  Then, $\|x\|_{L_1}\geq \frac{1}{2}cA_1 \|x\|_{L_2}$ for all $x\in \overline{span}(y_j)$, where $A_1$ is the constant given in Khintchine's inequality.
\end{lem}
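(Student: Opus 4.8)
The plan is to combine the $L_1$-unconditionality of $(y_j)$ established in Lemma~\ref{L:unc}, Khintchine's inequality, and a short duality computation in $\ell_2$. Write a typical vector of $\overline{span}(y_j)$ as $x=\sum a_j y_j$; since $(y_j)$ is orthonormal we have $\|x\|_{L_2}=(\sum a_j^2)^{1/2}$, so the goal is $\|x\|_{L_1}\ge\tfrac12 cA_1(\sum a_j^2)^{1/2}$, and approximating $x$ by its partial sums (using that $L_2$-convergence on the probability space $[0,1]$ forces $L_1$-convergence) reduces the problem to finite sums $x=\sum_{j=1}^N a_j y_j$. \textbf{First} I would symmetrize. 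Introduce a Rademacher sequence $(r_j)$ on an auxiliary copy of $[0,1]$ with variable $s$. By Lemma~\ref{L:unc} the sequence $(y_j)$ is $2$-unconditional in $L_1([0,1])$, so $\|\sum_{j=1}^N r_j(s)a_j y_j\|_{L_1}\le 2\|x\|_{L_1}$ for every $s$; averaging over $s$ and applying Fubini on $[0,1]_s\times[0,1]_t$ gives $\|x\|_{L_1}\ge\tfrac12\int_0^1\int_0^1\big|\sum_j r_j(s)a_j y_j(t)\big|\,ds\,dt$. Applying Khintchine's inequality in the $s$ variable, for each fixed $t$, to the scalars $(a_j y_j(t))_j$ then yields $\|x\|_{L_1}\ge\tfrac{A_1}{2}\int_0^1\big(\sum_j a_j^2 y_j(t)^2\big)^{1/2}\,dt$.

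\textbf{Second} I would bound the square-function integral below by $c(\sum_j a_j^2)^{1/2}$. For fixed $t$, test the vector $(a_j y_j(t))_j\in\ell_2$ against $v=(v_j)_j$ with $v_j=(\sum_k a_k^2)^{-1/2}\,|a_j|\,\sign(a_j y_j(t))$, which has $\|v\|_{\ell_2}\le 1$; Cauchy--Schwarz then gives the pointwise bound $\big(\sum_j a_j^2 y_j(t)^2\big)^{1/2}\ge(\sum_k a_k^2)^{-1/2}\sum_j a_j^2\,|y_j(t)|$. Integrating in $t$ and invoking the hypothesis $\|y_j\|_{L_1}\ge c$ gives $\int_0^1\big(\sum_j a_j^2 y_j(t)^2\big)^{1/2}\,dt\ge(\sum_k a_k^2)^{-1/2}\sum_j a_j^2\|y_j\|_{L_1}\ge c(\sum_j a_j^2)^{1/2}$, and combining this with the first step finishes the argument.

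The one step that is not routine bookkeeping is the second one: the crude pointwise estimate $\big(\sum_j a_j^2 y_j(t)^2\big)^{1/2}\ge|a_k y_k(t)|$ uses only a single coordinate and loses a factor $\sqrt{N}$, so the point is to choose a sign-aware dual vector $v$ that simultaneously extracts $|y_j(t)|$ from \emph{every} coordinate, which is exactly what keeps the final constant independent of $N$. The symmetrization and the use of Khintchine's inequality in the first step are standard.
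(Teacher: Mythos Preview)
Your argument is correct and follows the same overall structure as the paper's proof: symmetrize via Lemma~\ref{L:unc}, apply Khintchine's inequality in the Rademacher variable, then bound the square-function integral $\int_0^1(\sum_j a_j^2 y_j(t)^2)^{1/2}\,dt$ below by $c(\sum_j a_j^2)^{1/2}$. The only difference is in this last step: the paper invokes Jensen's inequality (equivalently, the triangle inequality for the $\ell_2$-valued integral) to get the sharper intermediate bound $\int_0^1(\sum_j a_j^2 y_j(t)^2)^{1/2}\,dt\ge(\sum_j a_j^2\|y_j\|_{L_1}^2)^{1/2}$, whereas you reach the same final constant via an explicit Cauchy--Schwarz/duality test vector; both are valid and yield the identical conclusion.
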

\begin{proof}
Let $x=\sum_{j=1}^N a_j y_j\in span(y_j)$.  We have that,
\begin{align*}
\|x\|_{L_1}&=\int_0^1 \Big|\sum_{j=1}^N a_j y_j(s)\Big|\,ds\\
&\geq \frac{1}{2}  2^{-N}\sum_{\vp_j=\pm1} \int_0^1 \Big|\sum_{j=1}^N \vp_j a_j y_j(s)\Big|\,ds\quad\big(\textrm{as $(y_j)$ is 2-uncondtitional in $L_1$}\big)\\
&=\frac{1}{2}  \int_0^1 \int_0^1 \Big|\sum_{j=1}^N r_j(t)a_j y_j(s)\Big|\,ds\,dt \quad\big(\textrm{where $(r_j)$ is the Rademacher sequence}\big)\\
&=\frac{1}{2}  \int_0^1 \int_0^1 \Big|\sum_{j=1}^N r_j(t)a_j y_j(s)\Big|\,dt\,ds\\
&\geq\frac{1}{2}A_1  \int_0^1  \Big(\sum_{j=1}^N |a_j y_j(s)|^2\Big)^{1/2}\,ds\qquad\big(\textrm{by Khintchine's Inequality}\big)\\
&\geq\frac{1}{2}A_1   \Big(\sum_{j=1}^N\Big( \int_0^1 |a_j y_j(s)|\,ds\Big)^2\Big)^{1/2}\qquad\big(\textrm{by Jensen's Inequality}\big)\\
&=\frac{1}{2}A_1   \Big(\sum_{j=1}^N|a_j|^2  \|y_j\|_{L_1}^2\Big)^{1/2}\\
&\geq\frac{1}{2}A_1 c   \Big(\sum_{j=1}^N|a_j|^2  \Big)^{1/2}=\frac{1}{2}A_1 c   \|x\|_{L_2}
\end{align*}
\end{proof}

We are now prepared to prove the remaining parts of Theorem \ref{T:main}.

\begin{thm}\label{T:main3}
Let $(y_j)_{j=1}^\infty$ be an independent orthonormal sequence of mean zero random variables in $L_2([0,1])$ and  let $x_j=y_j+1_{(j,j+1]}$ for all $j\in\N$.  Then the following are equivalent,
\begin{enumerate}
\item The subspace $\overline{span} (x_j)_{j=1}^\infty$ does stable phase retrieval in $L_2(\R)$.
\item There exist constants $a,\gamma>0$ with $\textrm{Prob}(|x|\geq a\|x\|)\geq\gamma$ for all $x\in \overline{span}(y_j)_{j=1}^\infty$.
\item  The $L_1$ and $L_2$ norms of $y_j$ are comparable.  That is, there exists $A>0$ such that  $\|y_j\|_{L_2([0,1]}\geq \|y_j\|_{L_1([0,1])}\geq A \|y_j\|_{L_2([0,1])}$ for all $j\in\N$.
\item It is not the case that for all $\vp>0$, the subspace $\overline{span}(y_j)_{j=1}^\infty\subseteq L_2([0,1])$ contains two independent unit vectors which may be perturbed by less than $\vp$ to have disjoint support.  
\end{enumerate}
\end{thm}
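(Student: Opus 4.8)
The plan is to close the square of implications via the cycle $(2)\Rightarrow(1)\Rightarrow(3)\Rightarrow(2)$ together with the two extra implications $(2)\Rightarrow(4)$ and $(4)\Rightarrow(3)$. Since $(2)\Rightarrow(1)$ is precisely the furthermore part of Theorem \ref{T:main2}, only $(1)\Rightarrow(3)$, $(3)\Rightarrow(2)$, $(2)\Rightarrow(4)$, and $(4)\Rightarrow(3)$ remain. Throughout we use that $\|y_j\|_{L_2([0,1])}=1$ by orthonormality, so condition $(3)$ is equivalent to $\inf_j\|y_j\|_{L_1([0,1])}>0$, and that $\|x\|_{L_1}\leq\|x\|_{L_2}$ automatically for every $x$ since $[0,1]$ has total mass one.

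The implications $(1)\Rightarrow(3)$ and $(4)\Rightarrow(3)$ I would prove simultaneously through their contrapositives, by establishing the following \emph{key fact}: if $\inf_j\|y_j\|_{L_1}=0$, then for every $\varepsilon>0$ there exist $j\neq k$ such that $y_j$ and $y_k$ can each be perturbed in $L_2([0,1])$ by less than $\varepsilon$ so as to have disjoint support. The mechanism is a \emph{peakiness} estimate: if $\|y_j\|_{L_1}=\eta$ then, putting $E_j:=\{|y_j|\geq\eta^{-1/2}\}$, Markov's inequality gives $\textrm{Prob}(E_j)\leq\eta^{3/2}$, while $\int_{E_j^c}|y_j|^2\leq\eta^{-1/2}\|y_j\|_{L_1}=\eta^{1/2}$, so all but at most $\eta^{1/2}$ of the $L_2$-mass of $y_j$ is carried by the set $E_j$ of measure $\leq\eta^{3/2}$. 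Given $\varepsilon>0$, choose $\delta>0$ small (depending on $\varepsilon$) and then, using $\inf_j\|y_j\|_{L_1}=0$, indices $j\neq k$ with $\|y_j\|_{L_1},\|y_k\|_{L_1}<\delta$. Because $|y_j|^2$ is a function of $y_j$ and $1_{E_k}$ is a function of $y_k$, independence of the sequence $(y_j)$ gives $\int_{E_k}|y_j|^2=\textrm{Prob}(E_k)\leq\delta^{3/2}$ and symmetrically $\int_{E_j}|y_k|^2\leq\delta^{3/2}$. Hence replacing $y_j$ by $y_j1_{E_j\setminus E_k}$ and $y_k$ by $y_k1_{E_k\setminus E_j}$ changes each by at most $(\delta^{1/2}+\delta^{3/2})^{1/2}<\varepsilon$ in $L_2$ and yields two functions with disjoint support, proving the key fact. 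Now $\neg(3)\Rightarrow\neg(4)$ is immediate, since $y_j$ and $y_k$ are independent unit vectors of $\overline{span}(y_i)$. For $\neg(3)\Rightarrow\neg(1)$, given such $j\neq k$ set $f=(y_j+1_{(j,j+1]})+(y_k+1_{(k,k+1]})$ and $g=(y_j+1_{(j,j+1]})-(y_k+1_{(k,k+1]})$ in $\overline{span}(x_i)$. On $(1,\infty)$ we have $|f|=|g|=1_{(j,j+1]}+1_{(k,k+1]}$, while on $[0,1]$ one has $\big\||f|-|g|\big\|_{L_2}=2\big\|\min(|y_j|,|y_k|)\big\|_{L_2}$. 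Moreover, for any two $L_2$-functions that can be perturbed by less than $\varepsilon$ to have disjoint support, the $L_2$-norm of the pointwise minimum of their absolute values is at most $\sqrt2\,\varepsilon$, as one sees by splitting the domain according to which perturbed function vanishes there. Hence $\big\||f|-|g|\big\|_{L_2(\R)}<2\sqrt2\,\varepsilon$, whereas $\|f-g\|_{L_2(\R)}=\|f+g\|_{L_2(\R)}=2\sqrt2$; letting $\varepsilon\to0$ shows that no stability constant can hold, i.e.\ $\neg(1)$.

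For $(3)\Rightarrow(2)$, assume $\|y_j\|_{L_1}\geq A$ for all $j$. Lemma \ref{L:ind_lower}, which combines the $1$-suppression $L_1$-unconditionality of independent mean-zero sequences (Lemma \ref{L:unc}) with Khintchine's inequality, produces a constant $A':=\tfrac12 A A_1\in(0,1]$ with $\|x\|_{L_1}\geq A'\|x\|_{L_2}$ for all $x\in\overline{span}(y_j)$. Applying Lemma \ref{L:bb2} with $p=1$ and $a:=A'/2$ then yields $\textrm{Prob}(|x|\geq a\|x\|_{L_2})\geq (A'-a)^2=(A'/2)^2>0$ for every $x\in\overline{span}(y_j)$, which is exactly $(2)$. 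Finally, $(2)\Rightarrow(4)$ follows from Lemma \ref{L:lower_b}: suppose $(2)$ holds with constants $a,\gamma$, and suppose toward a contradiction that $u,v\in\overline{span}(y_j)$ are independent unit vectors that can be perturbed by less than $\varepsilon:=a\gamma/(2\sqrt2)$ to have disjoint support. Since $\|u\|=\|v\|=1$, Lemma \ref{L:lower_b} gives $2a\gamma\leq\big\||u+v|-|u-v|\big\|_{L_2}=2\big\|\min(|u|,|v|)\big\|_{L_2}<2\sqrt2\,\varepsilon=a\gamma$, a contradiction; hence for this $\varepsilon$ no such pair exists, so $(4)$ holds.

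The step I expect to be the main obstacle is the peakiness-plus-independence argument of the second paragraph: one must convert the soft hypothesis ``$\|y_j\|_{L_1}$ small while $\|y_j\|_{L_2}=1$'' into the quantitative assertion that the $L_2$-mass of $y_j$ concentrates on a set of small measure, and then use independence of $y_j$ and $y_k$ to force their concentration sets $E_j,E_k$ to be essentially disjoint, so that truncating each $y$ off the other's set is a genuinely small perturbation. Once this is in place, all four implications above are either immediate or short computations, and together with Theorem \ref{T:main2} they establish the equivalence of $(1)$--$(4)$.
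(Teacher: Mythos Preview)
Your proposal is correct and relies on the same toolkit as the paper (Theorem~\ref{T:main2}, Lemma~\ref{L:ind_lower}, Lemma~\ref{L:bb2}, and a peakiness argument for variables with small $L_1$-norm), but the organisation of the implications differs in a few places worth noting. The paper runs the single cycle $(2)\Rightarrow(1)\Rightarrow(4)\Rightarrow(3)\Rightarrow(2)$: for $\neg(4)\Rightarrow\neg(1)$ it takes \emph{arbitrary} independent unit vectors $x,y\in\overline{span}(y_j)$ that are $\varepsilon$-close to disjointly supported functions and builds $f,g$ from the full expansions of $x,y$ in the $(y_j)$ basis, while for $\neg(3)\Rightarrow\neg(4)$ it needs only \emph{one} index $N$ with $\|y_N\|_{L_1}$ small and pairs it with the fixed $y_1$ (truncating $y_1$ off the small set $\{|y_N|\geq\varepsilon\}$ via independence). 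By contrast, your key fact selects \emph{two} peaky basis vectors $y_j,y_k$ and truncates each onto its own concentration set minus the other's; this is a slightly heavier construction but yields $\neg(3)\Rightarrow\neg(1)$ and $\neg(3)\Rightarrow\neg(4)$ in one stroke. Your additional direct argument $(2)\Rightarrow(4)$ via Lemma~\ref{L:lower_b} is a nice observation the paper does not make explicitly (the paper obtains that implication only through $(2)\Rightarrow(1)\Rightarrow(4)$), and it makes transparent the quantitative incompatibility between the small-ball condition and near-disjointness. Either route closes the equivalence; the paper's is marginally more economical in that only one small-$L_1$ index is ever needed, while yours highlights more clearly that Lemma~\ref{L:lower_b} is already the obstruction to $(4)$ failing.
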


\begin{proof}
We have by Theorem \ref{T:main2} that $(2)\Rightarrow(1)$, and we have by Lemma \ref{L:ind_lower} and Lemma \ref{L:bb2} that $(3)\Rightarrow(2)$.

We now prove that $(1)\Rightarrow (4)$ by contrapositive.   Let $\vp>0$. We assume that there exists $J\subseteq\N$ and unit vectors $x\in {span}(y_j)_{j\in J}$ and $y\in {span}(y_j)_{j\in\N\setminus J}$ such that there exists $x',y'\in L_2([0,1])$ with disjoint support satisfying $\|x-x'\|<\vp$ and $\|y-y'\|<\vp$.

We have that $x=\sum_{j\in J} a_j y_j$ and  $y=\sum_{j\in J^c} a_j y_j$ for some sequence of scalars $(a_j)_{j=1}^\infty$.  Let  $f=\sum_{j\in J}a_j (y_j+1_{(j,j+1]})$ and $g=\sum_{j\in J^c}a_j (y_j+1_{(j,j+1]})$.  As $\|x\|=\|y\|=1$ we have that $\|f\|=\|g\|=\sqrt{2}$. 
Thus, $\|(f+g)-(f-g)\|=2\|g\|=2\sqrt{2}$ and 
$\|(f+g)+(f-g)\|=2\|f\|=2\sqrt{2}$.

On the other hand, we have that
\begin{align*}
\big\||f+g|&-|f-g|\big\|=\Big\| \big|\sum a_j (y_j+1_{(j,j+1]})\big|-\big|\sum_{j\in J} a_j (y_j+1_{(j,j+1]})-\sum_{j\not\in J} a_j (y_j+1_{(j,j+1]})\big|\Big\|\\
&=\Big\| \big|\sum a_j y_j\big|-\big|\sum_{j\in J} a_j y_j-\sum_{j\not\in J} a_j y_j\big|\Big\|\\
&=\big\| |x+y|- |x-y|\big\|\\
&= \big\| |x-x'+x'+y'-y'+y|-|x-x'+x'-y'+y'-y|\big\|\\
&\leq \big\| |x'+y'|-|x'-y'|\big\|+2\|x-x'\|+2\|y-y'\|\\
&= \big\| |x'+y'|-|x'+y'|\big\|+2\|x-x'\|+2\|y-y'\|\quad\textrm{ (as $x'$ and $y'$ has disjoint support)}\\
&<0+2\vp+2\vp
\end{align*}
Thus, $\big\||f+g|-|f-g|\big\|<4\vp$.  This proves that phase retrieval for $\overline{span} (y_j+1_{(j,j+1]})_{j=1}^\infty$ is not stable in $L_2(\R)$ and contradicts (1).

We now prove $(4)\Rightarrow (3)$ by contrapositive.  We assume that for all $A>0$ there exists $j\in\N$ with $\|y_j\|_{L_1([0,1])}\leq A$.  Let $\vp>0$.  
Choose $N\in\N$ such that $\|y_N\|_{L_1([0,1])}<\vp^2$.  We have that $\|y_N\|_{L_1([0,1])}\geq \vp \textrm{Prob}(y_N\geq\vp)$.  Thus, $\textrm{Prob}(y_N\geq\vp)<\vp$.  Let $x'=y_1 \cdot 1_{y_N<\vp}$ and $y'=y_N \cdot 1_{y_N\geq \vp}$.  Thus $x'$ and $y'$ have disjoint support.  
We have that
$$
\|y_N-y'\|_{L_2([0,1])}=\|y_N\cdot 1_{y_N<\vp}\|_{L_2([0,1])}<\vp.
$$
For estimating the distance between $y_1$ and $x'$ we have that
\begin{align*}
\|y_1-x'\|^2_{L_2([0,1])}&=\|y_1\cdot 1_{y_N\geq \vp}\|^2_{L_2([0,1])}\\
&=\int_{y_N\geq \vp}|y_1|^2 \\
&=\textrm{Prob}(y_N\geq \vp)\int|y_1|^2\qquad\textrm{ (as $y_1$ and $y_N$ are independent)} \\
&<\vp
\end{align*}

Hence $x'$ is within $\vp^{1/2}$ of $y_1$, $y'$ is within $\vp$ of $y_N$, and the vectors $x'$ and $y'$ have disjoint support.  As $\vp>0$ is arbitrary, this contradicts (4).
\end{proof}

\section{Stability in finite dimensions for sub-Gaussian random variables}\label{S:4}

 As frames and continuous frames never do stable phase retrieval for infinite dimensional Hilbert spaces, we had to express our results in terms of stable phase retrieval for subspaces of $L_2$.  However, when proving stable phase retrieval for finite dimensional spaces, we are able to express our results in terms of frames and continuous frames.
When constructing frames which do phase retrieval for finite dimensional Hilbert spaces, one desires a method which produces a frame with low stability bound which does not use a large number of vectors relative to the dimension.
In particular, we are interested in constructions for which there exists a constant $c>0$ so that for every dimension $n\in\N$ the construction will produce a frame $(f_j)_{j=1}^m$ for $\ell_2^n$ which does $c$-stable phase retrieval where $m$ is on the order of $n$. 
As is the case for RIP matrices from compressed sensing \cite{BDDW},  all known constructions which achieve optimal $m\in\N$ relative to $n\in\N$ are random and achieve their result with high probability.

  The first result on uniform stability for phase retrieval in an $n$-dimensional Hilbert space was for $m$ iid random vectors with uniform distribution on the sphere where $m$ was  on the order of $n\log(n)$ \cite{CSV}, which was later improved to $m$ being on the order of $n$ \cite{CL}.  This achievement was then extended to other sub-Gaussian distributions with additional assumptions such as small-ball probability assumptions \cite{EM} or lower $L_\infty$ bounds \cite{KL}\cite{KS}.  In the previous sections we gave a construction which did stable phase retrieval for an infinite dimensional subspace of $L_2(\R)$.  Note that this gives uniformly stable phase retrieval for each finite dimensional subspace.  By considering each finite dimensional subspaces as the range of the analysis operator of a continuous frame, we obtain  continuous frames for $\ell_2^n$ which do uniformly stable phase retrieval.  We then prove that if the continuous frame is constructed using sub-Gaussian random variables then a sampling of $m$ on the order of $n$ random vectors forms a frame of $\ell_2^n$ which does uniformly stable phase retrieval with high probability.  Recall that the corresponding results in infinite dimensions are impossible as no frame or continuous frame does stable phase retrieval for an infinite dimensional Hilbert space \cite{CCD}\cite{AG}. 
  Given a measure space $\Omega$ and a Hilbert space $H$, we say that $(f_t)_{t\in\Omega}\subseteq H$ is a {\em continuous Parseval frame} of $H$ if $\int_\Omega |\langle x, f_t\rangle|^2 dt=\|x\|^2$ for all $x\in H$.
  
  \begin{thm}\label{T:main_P_cframe}
Let $(y_j)_{j=1}^n$ be an orthonormal sequence of independent, mean-zero random variables in $L_2([0,1])$.  Let $a,\gamma>0$ so that $\textrm{Prob}(|x|\geq a\|x\|)_{L_2})\geq\gamma$ for all $x\in span(y_j)_{j=1}^n.$  Let $v_t=(y_1(t),...,y_n(t))\in\ell_2^n$ for all $t\in[0,1]$ and let $(e_j)_{j=1}^n$ be the unit vector basis for $\ell_2^n$.  Then $(\frac{1}{\sqrt{2}}v_t)_{t\in[0,1]}\cup (\frac{1}{\sqrt{2}}e_j)_{j=1}^n$ is a continuous Parseval frame of $\ell_2^n$ which does $6a^{-1}\gamma^{-1}$-stable phase retrieval.   That is, for all $x,y\in \ell_2^n$ we have that
$$min(\| x-y\|,\| x+y\|) \leq 6a^{-1}\gamma^{-1}
\Big(
\sum_{j=1}^n \big(|\langle x,e_j\rangle|-|\langle y, e_j\rangle|\big)^2+
\int_0^1\big(|\langle x,v_t\rangle|-|\langle y, v_t\rangle|\big)^2dt\Big)^{1/2}.
$$
\end{thm}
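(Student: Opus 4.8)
The plan is to separate the statement into its two claims --- that $(\tfrac{1}{\sqrt2}v_t)_{t\in[0,1]}\cup(\tfrac{1}{\sqrt2}e_j)_{j=1}^n$ is a continuous Parseval frame, and that it does $6a^{-1}\gamma^{-1}$-stable phase retrieval --- and to obtain the second directly from Theorem \ref{T:main2}. The first is a one-line computation: for $x=(x_1,\dots,x_n)\in\ell_2^n$ one has $\langle x,v_t\rangle=\sum_{j=1}^n x_j y_j(t)$, so $\int_0^1|\langle x,v_t\rangle|^2\,dt=\|\sum_j x_j y_j\|_{L_2([0,1])}^2=\sum_j|x_j|^2=\|x\|^2$ by orthonormality of $(y_j)_{j=1}^n$ in $L_2([0,1])$, and likewise $\sum_j|\langle x,e_j\rangle|^2=\|x\|^2$; halving and adding gives the Parseval identity. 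The substantive point is stable phase retrieval, and the idea is to realize $\ell_2^n$ isometrically as a subspace of $L_2(\R)$ of exactly the type treated in Theorem \ref{T:main2}.

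To do this I would introduce $T\colon\ell_2^n\to L_2(\R)$, $T(x)=\sum_{j=1}^n x_j(y_j+1_{(j,j+1]})$, whose range is $span(y_j+1_{(j,j+1]})_{j=1}^n$. Since the $y_j$ are supported on $[0,1]$, the indicators $1_{(j,j+1]}$ have pairwise disjoint supports in $(1,\infty)$, and $(y_j)$ is orthonormal, one gets $\|T(x)\|_{L_2(\R)}^2=\|x\|^2+\|x\|^2=2\|x\|^2$, so $T$ is $\sqrt2$ times an isometric embedding; in particular $\|T(x)\pm T(y)\|_{L_2(\R)}=\sqrt2\,\|x\pm y\|$. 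A pointwise inspection shows $|T(x)(t)|=|\langle x,v_t\rangle|$ for $t\in[0,1]$ and $|T(x)(s)|=|x_j|=|\langle x,e_j\rangle|$ for $s\in(j,j+1]$, so $\||T(x)|-|T(y)|\|_{L_2(\R)}^2$ equals exactly the quantity $S(x,y):=\sum_{j=1}^n(|\langle x,e_j\rangle|-|\langle y,e_j\rangle|)^2+\int_0^1(|\langle x,v_t\rangle|-|\langle y,v_t\rangle|)^2\,dt$ on the right-hand side of the theorem. Then I would invoke Theorem \ref{T:main2} to get that $span(y_j+1_{(j,j+1]})_{j=1}^n$ does $6a^{-1}\gamma^{-1}$-stable phase retrieval, apply it to $f=T(x)$, $g=T(y)$, and combine: $\sqrt2\,\min(\|x-y\|,\|x+y\|)\le 6a^{-1}\gamma^{-1}S(x,y)^{1/2}$, hence $\min(\|x-y\|,\|x+y\|)\le\tfrac{6}{\sqrt2}a^{-1}\gamma^{-1}S(x,y)^{1/2}\le 6a^{-1}\gamma^{-1}S(x,y)^{1/2}$, which is the stated inequality (in fact with a factor $\sqrt2$ to spare).

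The one point needing care --- the closest thing to an obstacle --- is that Theorem \ref{T:main2} is stated for an infinite orthonormal independent sequence, whereas $(y_j)_{j=1}^n$ here is finite. This is not a genuine difficulty: the proof of Theorem \ref{T:main2} and of Lemmas \ref{L:signs} and \ref{L:lower_b} uses only finite linear combinations of the $y_j$ together with the hypotheses of independence, orthonormality, and the small-ball bound, so it applies verbatim to a finite family; alternatively one may extend $(y_j)_{j=1}^n$ to an infinite independent orthonormal sequence with the same small-ball property (e.g.\ by adjoining suitably normalized independent Gaussian variables), apply Theorem \ref{T:main2} to the extension, and observe that $span(y_j+1_{(j,j+1]})_{j=1}^n$ is a subspace of the resulting space, on which stable phase retrieval then holds automatically. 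The remaining work is purely the bookkeeping of the $\sqrt2$ scalings, which happens to go in our favor.
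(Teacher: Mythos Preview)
Your proposal is correct and follows essentially the same route as the paper: verify the Parseval identity for $(v_t)$ and $(e_j)$, identify the analysis operator with the embedding $x\mapsto\sum_j x_j(y_j+1_{(j,j+1]})$ into $L_2(\R)$, and invoke Theorem~\ref{T:main2} on its range. Your treatment is in fact slightly more careful than the paper's --- you explicitly track the $\sqrt{2}$ scaling (noting the inequality holds with a factor to spare) and you address the finite-versus-infinite point that the paper simply glosses over when citing Theorem~\ref{T:main}.
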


\begin{proof}
Let $x=(a_j)_{j=1}^n\in\ell_2^n$.  As $(y_j)_{j=1}^n$ are ortho-normal we have that
$$\int_{0}^1 |\langle x,v_t\rangle|^2\,dt=\int_0^1 \big|\sum_{j=1}^n a_j y_j(t)\big|^2\,dt
=\Big\| \sum_{j=1}^n a_j y_j(t)\Big\|^2_{L_2([0,1])}=\sum_{j=1}^n |a_j|^2=\|x\|^2.
$$
Thus, $(v_t)_{t\in[0,1]}$ is a continuous Parseval frame of $\ell_2^n$.  Hence, $(\frac{1}{\sqrt{2}}v_t)_{t\in[0,1]}\cup(\frac{1}{\sqrt{2}}e_{\lfloor t\rfloor})_{t\in(1,n+1)}$ is a continuous Parseval frame of $\ell_2^n$.  If $T:\ell_2^n\rightarrow L_2([0,n+1))$ is the analysis operator of $(\frac{1}{\sqrt{2}}v_t)_{t\in[0,1]}\cup(\frac{1}{\sqrt{2}}e_{\lfloor t\rfloor})_{t\in(1,n+1)}$ then $T(e_j)=\frac{1}{\sqrt{2}}y_j+\frac{1}{\sqrt{2}}1_{[j,j+1)}$ for all $1\leq j\leq n$.  By Theorem \ref{T:main} we have that the subspace $span(y_j+1_{[j,j+1)})_{j=1}^n\subseteq L_2([0,n+1))$ does 
$ 6a^{-1}\gamma^{-1} 
$-stable phase retrieval.  Thus the continous Parseval frame $(\frac{1}{\sqrt{2}}v_t)_{t\in[0,1]}\cup(\frac{1}{\sqrt{2}}e_{\lfloor t\rfloor})_{t\in(1,n+1)}$ does $ 6a^{-1}\gamma^{-1} 
$-stable phase retrieval.  
\end{proof}

Now that we have continuous frames for $\ell_2^n$ which do uniformly stable phase retrieval, we can consider the problem of sampling the continuous frames to obtain  frames for $\ell_2^n$ which do uniformly stable phase retrieval.  In the solution to the Discretization Problem, Speegle and the third author characterize when a continuous frame for a Hilbert space $H$ may be sampled to obtain a frame for $H$ \cite{FS}.  However, the proof relies on the solution to the Kaddison-Singer problem by Marcus, Spielman, and Srivistava \cite{MSS}, and though it is possible to sample a continuous frame to obtain a discrete frame with good frame bounds,  this may occur only with low probability.  However, if a continuous Parseval frame for $\ell_2^n$ is sub-Gaussian, then with high probability one can use $m$ on the order of $n$ sampling points to obtain a frame which with high probability has upper frame bound $2$ and lower frame bound $1/2$.  We will apply this fact to prove in Theorem \ref{T:main_P_sG} that if our construction uses sub-Gaussian random variables then the continuous frame in Theorem \ref{T:main_P_cframe}  may be sampled using $m$ on the order of $n$ points to provide a frame which with high probability does stable phase retrieval with stability constant independent of the dimension.

We introduce notation from \cite{V} and will use theorems from there on sub-Gaussian random variables and concentration inequalities.  We use the term {\em random variable} to refer to a measurable function from a probability space to $\R$, and we use the term {\em random vector} to refer to a measurable function from a probability space to a vector space.  There are many equivalent definitions for a random variable to be sub-Gaussian, and the following will be most convenient for us.
\begin{defn}
We say that a random variable $X$ is {\em $K$-sub-Gaussian} for some constant $K>0$ if 
$$\|X\|_{L_p}\leq K \sqrt{p}\hspace{1cm}\textrm{ for all }p\geq 1.
$$
 If $V$ is a random vector in a Hilbert space $H$, we say that $V$ is {\em $K$-sub-Gaussian} if the random variable $\langle x, V\rangle$ is $K$-sub-Gaussian for all $x\in H$ with $\|x\|=1$.  We say that a random vector $V$ in a Hilbert space $H$ is {\em isotropic} if $\E |\langle x, V\rangle|^2=\|x\|^2$ for all $x\in H$.  
\end{defn} 

We will be considering the case that $X$ is a $K$-sub-Gaussian random variable such that $\|X\|_{L_2}=1$ and hence we will always have that $K\geq 2^{-1/2}$.
Note that $V$ is an isotropic random vector is equivalent to $V$ being a continuous Parseval frame  over a probability space.
Most of the theorems about sub-Gaussian random variables and sub-Gaussian random vectors that we cite include a universal constant $C>1$.  We will use $C$ to denote this constant for the remainder of this section.  The following lemma gives a simple criterion to test if a random vector is sub-Gaussian.

\begin{lem}[Theorem 3.4.2 \cite{V}] \label{T:Vv}
Let $V=(y_1,y_2,...,y_n)$ be a random vector in $\ell_2^n$.  If the coordinates $(y_j)_{j=1}^n$ are independent, mean-zero, variance-one, and $K$-sub-Gaussian random variables then $V$ is a $CK$-sub-Gaussian isotropic random vector.
\end{lem}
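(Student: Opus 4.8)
The plan is to verify the two defining properties of a sub-Gaussian isotropic random vector separately, the isotropy being essentially immediate and the sub-Gaussian bound being the substantive point. For isotropy, fix a unit vector $x=(a_j)_{j=1}^n\in\ell_2^n$ and expand $\E|\langle x,V\rangle|^2=\E\big|\sum_j a_j y_j\big|^2=\sum_j a_j^2\,\E y_j^2+\sum_{i\neq j}a_ia_j\,\E[y_iy_j]$. The diagonal terms contribute $\sum_j a_j^2=\|x\|^2$ because each $y_j$ has variance one, and the off-diagonal terms vanish because the $y_j$ are independent and mean-zero, so $\E[y_iy_j]=\E y_i\,\E y_j=0$ for $i\neq j$. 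Hence $\E|\langle x,V\rangle|^2=\|x\|^2$ for every $x$, which is isotropy.

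For the sub-Gaussian estimate, fix a unit vector $x=(a_j)_{j=1}^n$ and put $S=\langle x,V\rangle=\sum_j a_j y_j$; since each $y_j$ is mean-zero, so is $S$. The key step is to pass from the $L_p$-moment formulation of sub-Gaussianity used in this paper to the moment-generating-function formulation, using the classical equivalence (up to a universal constant) of the various sub-Gaussian characterizations, as recorded in \cite{V}. Concretely, the hypothesis $\|y_j\|_{L_p}\le K\sqrt p$ for all $p\ge 1$, together with $\E y_j=0$, yields a universal constant $c>0$ with $\E\exp(\lambda y_j)\le\exp(cK^2\lambda^2)$ for all $\lambda\in\R$ and all $j$. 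Independence of the $y_j$ then lets the moment generating function of $S$ factorize:
\begin{align*}
\E\exp(\lambda S)=\prod_{j=1}^n\E\exp(\lambda a_j y_j)\le\prod_{j=1}^n\exp\!\big(cK^2 a_j^2\lambda^2\big)=\exp\!\Big(cK^2\lambda^2\sum_{j=1}^n a_j^2\Big)=\exp(cK^2\lambda^2),
\end{align*}
where the last equality uses $\|x\|=1$. Thus $S$ satisfies the same quadratic mgf bound, with the same parameter $cK^2$, that each individual coordinate does. Converting back through the same equivalence gives $\|S\|_{L_p}\le CK\sqrt p$ for all $p\ge 1$, with $C$ a universal constant. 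Since $x$ was an arbitrary unit vector, $V$ is $CK$-sub-Gaussian, and combined with the first paragraph $V$ is a $CK$-sub-Gaussian isotropic random vector.

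The only genuine obstacle is the two conversions between the $L_p$-moment definition and the mgf definition of sub-Gaussianity, and the bookkeeping that the absolute constants can be chosen uniformly in $j$ and independent of $n$; both are standard and are exactly the content of the basic sub-Gaussian lemmas in \cite{V}, so I would cite them rather than reproduce the computations. The one point worth flagging explicitly is that the purely quadratic exponent in the mgf bound requires the mean-zero hypothesis — without it there would be an extra linear term in $\lambda$ that would spoil the clean factorization — which is why we record $\E S=0$ before multiplying the one-dimensional mgf bounds together. An alternative, more self-contained route would bound $\E|S|^p$ directly by a multinomial expansion and Khintchine-type symmetrization, but this is messier and gives worse constants, so the mgf argument is preferable.
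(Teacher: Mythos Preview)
The paper does not give its own proof of this lemma: it is simply quoted as Theorem~3.4.2 of \cite{V} and used as a black box. Your argument is correct and is essentially the standard proof one finds in \cite{V}: isotropy from orthonormality of independent mean-zero variance-one variables, and the sub-Gaussian bound by passing to the moment-generating-function characterization, factoring over the independent coordinates, and converting back. Your remark that the mean-zero hypothesis is what allows the purely quadratic mgf bound (and hence the clean factorization) is well taken, and your aside about a direct moment expansion is accurate but, as you say, unnecessary here.
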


Let $(y_j)_{j=1}^\infty$ be an ortho-normal sequence of independent mean zero random variable in $L_2([0,1])$.  
Our characterization of phase retrieval in the previous sections showed that the subspace $\overline{span}(y_j+1_{(j,j+1)})$ does stable phase retrieval in $L_2(\R)$ if and only if there exists constants $a,\gamma>0$ such that $\textrm{Prob}(|x|\geq a\|x\|)\geq\gamma$ for all $x\in\overline{span}(y_j)_{j=1}^\infty$.  The following lemma shows that if $(y_j)_{j=1}^\infty$ are uniformly sub-Gaussian then they satisfy this property.  Recall that we use $A_1$ to denote the constant used in Khintchine's inequality. 

\begin{lem}\label{L:KL1}
If $(y_j)_{j=1}^\infty$ is a sequence independent, mean-zero, variance-one, and $K$-sub-Gaussian random variables then for $a=16^{-1}A_1 K^{-2}$ and $\gamma=a^2$ we have that 
$$\textrm{Prob}(|x|\geq a\|x\|_{L_2})\geq\gamma \qquad\textrm{ for all }x\in span(y_j).
$$
Thus for all $n\in\N$, the random vector $v=(y_1,...,y_n)$ in $\ell_2^n$ satisfies
$$\textrm{Prob}(|\langle x, v\rangle|\geq a\|x\|_{\ell_2^n})\geq\gamma\qquad\textrm{ for all }x\in \ell_2^n.$$
\end{lem}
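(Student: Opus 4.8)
The plan is to derive Lemma~\ref{L:KL1} from the two lemmas already in hand, namely Lemma~\ref{L:ind_lower} and Lemma~\ref{L:bb2}, so that the only genuinely new ingredient is a uniform lower bound on $\|y_j\|_{L_1}$ extracted from the sub-Gaussian hypothesis. Since each $y_j$ is mean-zero with variance one, $\|y_j\|_{L_2}=1$ and the sequence $(y_j)$ is orthonormal (independence plus mean zero forces orthogonality), so Lemma~\ref{L:ind_lower} will be applicable once such a bound is available; note also that $1=\|y_j\|_{L_2}\le K\sqrt{2}$ forces $K\ge 2^{-1/2}$.

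First I would bound $\|y_j\|_{L_1}$ from below. Interpolating the $L_2$ norm between the $L_1$ and $L_4$ norms on the probability space $[0,1]$ (log-convexity of $L_p$-norms, i.e.\ Hölder's inequality, with interpolation parameter $2/3$) gives $\|y_j\|_{L_2}\le \|y_j\|_{L_1}^{1/3}\|y_j\|_{L_4}^{2/3}$. The $K$-sub-Gaussian hypothesis gives $\|y_j\|_{L_4}\le K\sqrt{4}=2K$, and since $\|y_j\|_{L_2}=1$ this rearranges to $\|y_j\|_{L_1}\ge (2K)^{-2}=\tfrac{1}{4K^2}$.

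Next I would feed $c=\tfrac{1}{4K^2}$ into Lemma~\ref{L:ind_lower}, obtaining $\|x\|_{L_1}\ge \tfrac{1}{2}cA_1\|x\|_{L_2}=\tfrac{A_1}{8K^2}\|x\|_{L_2}$ for every $x\in span(y_j)$, where $A_1$ is Khintchine's constant. Put $A:=\tfrac{A_1}{8K^2}$; since $A_1\le 1$ (apply Khintchine's inequality to a single term) and $K\ge 2^{-1/2}$ we get $A\le\tfrac{1}{4}\le 1$. Now apply Lemma~\ref{L:bb2} with exponent $p=1$, this value of $A$, and $a:=A/2=\tfrac{A_1}{16K^2}$ (so $0<a<A$): its conclusion holds with $\gamma=(A^{1}-a^{1})^{2/(2-1)}=(A-a)^2=(A/2)^2=a^2$, which is precisely the pair $(a,\gamma)$ in the statement. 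Finally, for the reformulation in $\ell_2^n$, I would note that for $v=(y_1,\dots,y_n)$ and any $x=(x_1,\dots,x_n)\in\ell_2^n$ one has $\langle x,v\rangle=\sum_{j=1}^n x_j y_j\in span(y_j)$ with $\|\langle x,v\rangle\|_{L_2}=\|x\|_{\ell_2^n}$ by orthonormality, so the first part applies verbatim.

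I do not expect a real obstacle here: the substance is carried entirely by Lemmas~\ref{L:ind_lower} and~\ref{L:bb2} together with Khintchine's inequality. The only point demanding a little care is the constant bookkeeping — choosing $a$ to be exactly half of the $L_1$-comparison constant $A$ is what makes $\gamma=(A-a)^2$ collapse to $a^2$ — and one must check the mild inequalities $0<a<A\le 1$ needed to invoke Lemma~\ref{L:bb2}.
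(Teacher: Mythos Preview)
Your proposal is correct and follows essentially the same argument as the paper: interpolate $\|y_j\|_{L_2}$ between $\|y_j\|_{L_1}$ and $\|y_j\|_{L_4}$ via H\"older (the paper writes this out with a parameter $\lambda=\tfrac{p-2}{p-1}$ and only sets $p=4$ at the end, but it is the identical inequality), then feed the resulting bound $\|y_j\|_{L_1}\ge (4K^2)^{-1}$ into Lemma~\ref{L:ind_lower} and finally into Lemma~\ref{L:bb2} with $p=1$ and $a=A/2$. Your extra checks that $K\ge 2^{-1/2}$ and $A\le 1$, and your explicit translation to the $\ell_2^n$ statement, are details the paper leaves implicit.
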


\begin{proof}
We have for  $j\in\N$ and $2< p<\infty$ that $\|y_j\|_{L_p}\leq K\sqrt{p}$ and $\|y_j\|_{L_2}=1$. 
We will obtain our result by considering specifically $p=4$, but we will wait till the end before making that substitution.
Fix $\lambda=\frac{p-2}{p-1}$ and note that $0<\lambda<1$ and that $1=\frac{2-\lambda}{p}+\frac{\lambda}{1}$. 
We now have for each $j\in\N$ that
\begin{align*}
    \|y_j\|^2_{L_2}&=\int |y_j|^{\lambda}|y_j|^{2-\lambda}\\
    &\leq\Big(\int |y_j|\Big)^{\lambda}\Big(\int |y_j|^{p}\Big)^{(2-\lambda)/p}\qquad \big(\textrm{by H{\"o}lder's inequality for $\tfrac{1}{\lambda}$ and $\tfrac{p}{2-\lambda}$}\big)\\
    &= \|y_j\|_{L_1}^\lambda \|y_j\|_{L_p}^{2-\lambda}
\end{align*}

Thus, as $\|y_j\|_{L_2}=1$ and $\|y_j\|_{L_p}\leq K p^{1/2}$ we have that 
$$\|y_j\|_{L_1}\geq K^{1-2/\lambda}p^{1/2-1/\lambda}
$$
By Lemma \ref{L:ind_lower} we have that 

$$\|x\|_{L_1}\geq \frac{1}{2}A_1 K^{1-2/\lambda}p^{1/2-1/\lambda} \|x\|_{L_2}\qquad\textrm{ for all $x\in span(y_j)$.}$$
We now apply Lemma \ref{L:bb2} for $a=\frac{1}{4}A_1 K^{1-2/\lambda}p^{1/2-1/\lambda}$ and $\gamma=a^2$ to get that 
$$\textrm{Prob}(|x|\geq a\|x\|_{L_2})\geq\gamma \qquad\textrm{ for all }x\in span(y_j).
$$
Plugging in $p=4$ gives $\lambda=2/3$ and hence $a=16^{-1}A_1 K^{-2}$.
\end{proof}

A collection of vectors $(f_j)_{j\in J}$ in a Hilbert space $H$ is called a {\em frame} of $H$ with {\em frame bounds $A$ and $B$} if $0<A\leq B<\infty$ and 
$$A\|x\|^2\leq \sum_{j\in J} |\langle x, f_j\rangle|^2\leq B\|x\|^2\qquad\textrm{ for all }x\in H.
$$
The {\em analysis operator} of a frame $(f_j)_{j\in J}$ of $H$ is the map $T:H\rightarrow \ell_2(J)$ given by $T(x)=(\langle x, f_j\rangle)_{j\in J}$ for all $x\in H$.  We will use in particular that if $(f_j)_{j\in J}$ has upper frame bound $B$ then the operator norm of the analysis operator satisfies $\|T\|\leq B^{1/2}$.

Suppose that $v$ is an isotropic random vector in $\ell^n_2$ and that $(v_j)_{j=1}^m$ are independent copies of $v$. Then typically, one must choose $m$ on the order of $n\log (n)$ so that with high probability the frame $(m^{-1/2} v_j)_{j=1}^m$ has lower frame bound $\frac{1}{2}$ and upper frame bound $2$ \cite{R}.  However, the following theorem will allow us to prove that if $(v_j)_{j=1}^m$ are sub-Gaussian then we may choose $m$ on the order of $n$ so that with high probability the frame $(m^{-1/2} v_j)_{j=1}^m$ has lower frame bound $\frac{1}{2}$ and upper frame bound $2$.
 Recall that the constant $C$ used in the following theorem is the same universal constant that we used earlier this section.

\begin{thm}[Theorem 4.6.1 \cite{V}] \label{T:sG}
Let $(f_j)_{j=1}^m$ be a sequence of independent, mean-zero, $K$-sub-Gaussian, isotropic random vectors in $\ell_2^n$.  Then for any $s> 0$ we  have with probability at least $1-2\exp(-s^2)$ that
$$\big(\sqrt{m}-CK^2(\sqrt{n}+s)\big)^2  \|x\|^2\leq \sum_{j=1}^m |\langle x,f_j\rangle|^2\leq  \big(\sqrt{m}+CK^2(\sqrt{n}+s)\big)^2\|x\|^2
$$
\end{thm}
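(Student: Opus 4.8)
Since the statement is quoted verbatim from Vershynin's book, the plan is essentially to reproduce the standard matrix-deviation argument. Write $A$ for the $m\times n$ matrix whose rows are $f_1,\dots,f_m$, so that $\sum_{j=1}^m|\langle x,f_j\rangle|^2 = \|Ax\|_2^2$. The claim is equivalent to the two-sided singular-value bound
\begin{equation*}
\sqrt m - CK^2(\sqrt n + s)\ \le\ s_{\min}(A)\ \le\ s_{\max}(A)\ \le\ \sqrt m + CK^2(\sqrt n + s),
\end{equation*}
and by homogeneity it suffices to control $\sup_{x\in S^{n-1}}\big|\,\|Ax\|_2 - \sqrt m\,\big|$.

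The first step is a pointwise concentration bound: for a fixed unit vector $x$, the entries $\langle f_j,x\rangle$ are independent, mean-zero, unit-variance, $K$-sub-Gaussian (isotropy gives variance one), hence $\langle f_j,x\rangle^2 - 1$ are independent mean-zero sub-exponential variables with norm $O(K^2)$. Bernstein's inequality then yields, for fixed $x$,
\begin{equation*}
\Pr\Big(\big|\,\tfrac1m\|Ax\|_2^2 - 1\,\big| \ge t\Big)\ \le\ 2\exp\!\big(-c\,m\,\min(t^2,t)/K^4\big),
\end{equation*}
and since $|\,\|Ax\|_2 - \sqrt m\,|\le |\,\|Ax\|_2^2 - m\,|/\sqrt m$ this translates into a sub-Gaussian tail for $\|Ax\|_2-\sqrt m$ on the scale $K^2$. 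The second step is a union bound over a $1/4$-net $\mathcal N$ of $S^{n-1}$, which has cardinality at most $9^n$; choosing the deviation level proportional to $K^2(\sqrt n + s)$ makes the failure probability at most $2\exp(-s^2)$ after absorbing the $9^n$ factor into the exponent. The third step is the standard net-to-sphere passage: $s_{\max}(A)\le 2\sup_{x\in\mathcal N}\|Ax\|_2$, and a symmetric argument controls $s_{\min}$, upgrading the net bound to a bound over all of $S^{n-1}$ at the cost of harmless constants folded into $C$.

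The only genuinely delicate point is the bookkeeping of the absolute constant $C$: one must check that the constant produced by Bernstein's inequality, after the net cardinality $9^n\le e^{Cn}$ is absorbed and the net-to-sphere factors of $2$ are accounted for, can be taken to be the \emph{same} universal $C$ throughout, and in particular that the $\sqrt n$ and $s$ terms can be combined additively rather than as $\max(\sqrt n,s)$. This is exactly the content of the cited theorem, so I would simply invoke Theorem 4.6.1 of \cite{V} and not re-derive the constant chase. The sub-exponential and sub-Gaussian norm manipulations, Bernstein's inequality, and the covering-number estimate $|\mathcal N|\le (1+2/\varepsilon)^n$ are all standard and are likewise available in \cite{V}.
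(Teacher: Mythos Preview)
Your proposal is correct and matches the paper's treatment: the paper does not prove this statement at all but simply cites it as Theorem 4.6.1 of \cite{V} and immediately applies it in Corollary \ref{C:frame_sG}. Your sketch of the standard matrix-deviation argument (Bernstein on the squares, union bound over a net, net-to-sphere passage) is exactly the proof given in \cite{V}, so your instinct to ultimately just invoke the reference rather than re-derive the constants is precisely what the paper does.
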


By applying Theorem \ref{T:sG}  we get the following corollary.

\begin{cor}\label{C:frame_sG}
Let $(y_j)_{j=1}^n$ be an independent sequence of  mean-zero, variance-one, $K$-sub-Gaussian random variables.  Let $(v_j)_{j=1}^m$ be an independent sequence in $\ell_2^n$ with the same distribution as $v=(y_1,y_2,...,y_n)$. 
   If $m\geq 64C^6K^4n$ then with probability at least $1-2\exp(-64^{-1}C^{-6}K^{-4}m)$ we have that $(m^{-1/2}v_j)_{j=1}^m$ is a frame of $\ell_2^n$ with lower frame bound $1/2 $ and upper frame bound $2$.  
\end{cor}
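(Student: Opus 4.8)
The plan is to deduce this directly from the two results quoted from \cite{V}, namely Lemma \ref{T:Vv} (which certifies that the $v_j$ meet the hypotheses of the concentration bound) and Theorem \ref{T:sG} (the concentration bound itself), using a single well-chosen value of the free parameter $s$.

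First I would apply Lemma \ref{T:Vv}: since $(y_j)_{j=1}^n$ are independent, mean-zero, variance-one, $K$-sub-Gaussian random variables, the vector $v=(y_1,\dots,y_n)$ is a $CK$-sub-Gaussian isotropic random vector in $\ell_2^n$, and it is mean-zero because its coordinates are. As the $v_j$ are independent with the common distribution of $v$, the sequence $(v_j)_{j=1}^m$ consists of independent, mean-zero, $CK$-sub-Gaussian, isotropic random vectors, so Theorem \ref{T:sG} applies with sub-Gaussian parameter $CK$ in place of $K$. Using $C(CK)^2=C^3K^2$, this gives, for any $s>0$, with probability at least $1-2\exp(-s^2)$,
$$\bigl(\sqrt{m}-C^3K^2(\sqrt{n}+s)\bigr)^2\|x\|^2 \;\le\; \sum_{j=1}^m |\langle x,v_j\rangle|^2 \;\le\; \bigl(\sqrt{m}+C^3K^2(\sqrt{n}+s)\bigr)^2\|x\|^2$$
for all $x\in\ell_2^n$.

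Next I would set $s=\tfrac18 C^{-3}K^{-2}\sqrt{m}$, so that $s^2=64^{-1}C^{-6}K^{-4}m$ and the probability above becomes exactly $1-2\exp(-64^{-1}C^{-6}K^{-4}m)$, as claimed. The hypothesis $m\ge 64C^6K^4n$ is precisely $\sqrt{n}\le \tfrac18 C^{-3}K^{-2}\sqrt{m}=s$, hence $\sqrt{n}+s\le 2s$ and therefore $C^3K^2(\sqrt{n}+s)\le 2C^3K^2 s=\tfrac14\sqrt{m}$. Substituting this into the displayed inequality, on the good event one obtains
$$\tfrac{9}{16}\,m\,\|x\|^2 \;\le\; \sum_{j=1}^m |\langle x,v_j\rangle|^2 \;\le\; \tfrac{25}{16}\,m\,\|x\|^2,$$
and dividing by $m$ shows that $(m^{-1/2}v_j)_{j=1}^m$ has lower frame bound $\tfrac{9}{16}\ge\tfrac12$ and upper frame bound $\tfrac{25}{16}\le 2$, which is the assertion.

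There is no genuine analytic difficulty here — every step is a direct appeal to a quoted theorem — so the only thing needing care is the bookkeeping of constants: tracking the factor of $C$ that Lemma \ref{T:Vv} introduces when passing from the scalar sub-Gaussian constant $K$ to the vector sub-Gaussian constant $CK$, and calibrating $s$ so that simultaneously the deviation term $C^3K^2(\sqrt{n}+s)$ is pushed below $\tfrac14\sqrt{m}$ (which is what makes $\tfrac{9}{16}$ and $\tfrac{25}{16}$ land inside $[\tfrac12,2]$) while the tail $2\exp(-s^2)$ takes the advertised form in $m$. The numerical constant $64$ is merely a convenient choice; any somewhat larger constant would work equally well, with room to spare.
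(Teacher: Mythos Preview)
Your proof is correct and follows essentially the same approach as the paper: both invoke Lemma \ref{T:Vv} to upgrade to a $CK$-sub-Gaussian vector and then plug into Theorem \ref{T:sG} with a single calibrated $s$, landing on the bounds $9/16$ and $25/16$. The only cosmetic difference is the choice of $s$: the paper sets $s=\tfrac14 C^{-3}K^{-2}\sqrt{m}-\sqrt{n}$ so that the deviation term equals $\tfrac14\sqrt{m}$ exactly and then verifies $s^2\ge 64^{-1}C^{-6}K^{-4}m$, whereas you set $s=\tfrac18 C^{-3}K^{-2}\sqrt{m}$ so that $s^2$ matches exactly and then verify the deviation is at most $\tfrac14\sqrt{m}$.
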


\begin{proof}
By Lemma \ref{T:Vv} we have that $(v_j)_{j=1}^m$ is a sequence of independent, mean-zero, isotropic, $KC$-sub-Gaussian random vectors.  

We apply Theorem \ref{T:sG} except with sub-Gaussian constant $CK$ instead of $K$.  Thus, for all $s>0$ we have with probability at least $1-2\exp(-s^2)$  that $(v_j)_{j=1}^m$ has lower frame bound $A$ and upper frame bound $B$, where  
$$A:= \big(\sqrt{m}-C^3K^2(\sqrt{n}+s)\big)^2\qquad\textrm{ and }\qquad
B:=\big(\sqrt{m}+C^3K^2(\sqrt{n}+s)\big)^2.$$  
       We now do some substitutions to get our desired formulation.
Suppose that $m\geq 64C^6K^4n$.  We let $s=4^{-1}C^{-3}K^{-2}m^{1/2}-n^{1/2}$.  Note that $s>0$.
We have that 
$$s^2=(4^{-1}C^{-3}K^{-2}m^{1/2}-n^{1/2})^2\geq (4^{-1}C^{-3}K^{-2}m^{1/2}-8^{-1}C^{-3}K^{-2}m^{1/2})^2
=64^{-1}C^{-6}K^{-4}m
$$
Thus, with probability at least $1-2\exp(-64^{-1}C^{-6}K^{-4}m)$ we have that $(v_j)_{j=1}^m$ has lower frame bound $A$ and upper frame bound $B$.
We now show that $B< 2m$.
\begin{align*}
B^{1/2}&= m^{1/2}+C^3K^2(n^{1/2}+s)\\
&=m^{1/2} + C^3K^2 (4^{-1}C^{-3}K^{-2}m^{1/2})\\
&=(5/4)m^{1/2}
\end{align*}
Thus, $B= (25/16)m<2m$.  The same argument gives that $A=(9/16)m> m/2$.  By scaling, we have that $(m^{-1/2}v_j)_{j=1}^m$ has lower frame bound $1/2$ and upper frame bound $2$.
\end{proof}

We now set some notation and give motivation for how we will proceed.  Let $(y_j)_{j=1}^n$ be an independent sequence of  mean-zero, variance-one, $K$-sub-Gaussian random variables and let $a,\gamma>0$ be constants such that
$\textrm{Prob}(|y|\geq a\|y\|_{L_2})\geq\gamma$ for all $y\in span(y_j)_{1\leq j\leq n}$.    Let $(v_j)_{j=1}^m$ be an independent sequence in $\ell_2^n$ with the same distribution as $v=(y_1,y_2,...,y_n)$.  Thus, for all $x\in \ell_2^n$ we have that  
\begin{equation}\label{E:low}
\textrm{Prob}\big(|\langle x,v\rangle|\geq a\|x\|\big)\geq\gamma \quad\textrm{ for all }x\in\ell_2^n.
\end{equation}
For each $x,y\in\ell_2^n$ with disjoint support and each $b>0$, we denote $J_{x,y}(b)$ to be the set
\begin{equation}\label{E:J_sG}
J_{x,y}(b)=\Big\{j\in[m]\,:\,|\langle x,v_j\rangle|\geq b\|x\|\textrm{ and }|\langle y,v_j\rangle|\geq b\|y\|\Big\},
\end{equation}
  where we denote $[m]=\{1,2,...,m\}$. 
By \eqref{E:low} we have  $\E|J_{x,y}(a)|\geq\gamma^2 m$. 
We will prove that with high probability, every $x,y\in \ell_2^n$ with disjoint support satisfies $|J_{x,y}(a/2)|\geq\gamma^2 m/4$.  Our first step is proving
the following Lemma which gives that for each individual $x,y\in\ell_2^n$ with disjoint support we have that $|J_{x,y}(a)|\geq\gamma^2 m/2$ with high probability.

\begin{lem}\label{L:lower_bP_sG}
Let $(y_j)_{j=1}^n$ be an independent sequence of  
random variables.  Let $(v_j)_{j=1}^m$ be an independent sequence in $\ell_2^n$ with the same distribution as $v=(y_1,y_2,...,y_n)$.  Suppose that $a,\gamma>0$ are such that 
$\textrm{Prob}(|\langle x, v\rangle|\geq a\|x\|_{L_2})\geq\gamma$ for all $x\in \ell_2^n$. Then
$$\textrm{Prob}\left(|J_{x,y}(a)|\leq (\gamma^2/2)m\right)\leq e^{-\gamma^4 m/2}\qquad\textrm{ for all $x,y\in \ell_2^n$ with disjoint support}.
$$

\end{lem}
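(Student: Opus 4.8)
The plan is to fix $x,y\in\ell_2^n$ with disjoint support and view $|J_{x,y}(a)|$ as a sum of independent Bernoulli random variables, then apply a Chernoff-type concentration bound for the lower tail. For each $j\in[m]$ let $Z_j$ be the indicator of the event $\{|\langle x,v_j\rangle|\geq a\|x\|\text{ and }|\langle y,v_j\rangle|\geq a\|y\|\}$, so that $|J_{x,y}(a)|=\sum_{j=1}^m Z_j$. Since the $v_j$ are independent and identically distributed as $v$, the $Z_j$ are i.i.d.\ Bernoulli random variables, and the key point is to lower bound $p:=\E Z_j=\textrm{Prob}(Z_1=1)$. Because $x$ and $y$ have disjoint support, $\langle x,v\rangle$ depends only on the coordinates $y_i$ with $i\in\supp(x)$ and $\langle y,v\rangle$ depends only on the coordinates $y_i$ with $i\in\supp(y)$; since the $y_i$ are independent, the two events $\{|\langle x,v\rangle|\geq a\|x\|\}$ and $\{|\langle y,v\rangle|\geq a\|y\|\}$ are independent. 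Hence $p\geq \textrm{Prob}(|\langle x,v\rangle|\geq a\|x\|)\cdot\textrm{Prob}(|\langle y,v\rangle|\geq a\|y\|)\geq\gamma\cdot\gamma=\gamma^2$, using the hypothesis applied to each of $x$ and $y$.

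With $\E|J_{x,y}(a)|=mp\geq\gamma^2 m$ in hand, I would invoke the standard multiplicative Chernoff bound for the lower tail of a sum of independent $\{0,1\}$ random variables: for $S=\sum Z_j$ with $\E S=\mu$ and any $0<\delta<1$, $\textrm{Prob}(S\leq(1-\delta)\mu)\leq\exp(-\delta^2\mu/2)$. Applying this with $\delta=1/2$ and noting $\mu\geq\gamma^2 m$, and using that the event $\{S\leq(\gamma^2/2)m\}$ is contained in $\{S\leq\mu/2\}$ whenever $\mu\geq\gamma^2 m$, we obtain
$$\textrm{Prob}\big(|J_{x,y}(a)|\leq(\gamma^2/2)m\big)\leq\exp\!\big(-\tfrac{1}{8}\mu\big)\leq\exp\!\big(-\tfrac{1}{8}\gamma^2 m\big).$$
This is slightly stronger than the claimed bound $e^{-\gamma^4 m/2}$ (recall $\gamma\leq 1$ here, since $\gamma\leq\textrm{Prob}(\cdots)\leq1$, so $\gamma^2/8\geq\gamma^4/8$; one can also just be less careful and absorb constants), so the stated inequality follows. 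Alternatively, to land exactly on the stated form one can use the cruder Chernoff estimate $\textrm{Prob}(S\leq t)\leq\exp(-\mu)(\,e\mu/t\,)^{t}$ or simply Hoeffding's inequality $\textrm{Prob}(S\leq\mu-t)\leq\exp(-2t^2/m)$ with $t=\mu-(\gamma^2/2)m\geq(\gamma^2/2)m$, giving $\textrm{Prob}(|J_{x,y}(a)|\leq(\gamma^2/2)m)\leq\exp(-\gamma^4 m/2)$ directly.

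The only genuine content here is the independence observation that upgrades the marginal bound $\textrm{Prob}(|\langle x,v\rangle|\geq a\|x\|)\geq\gamma$ into the product bound $p\geq\gamma^2$ for the joint event; everything after that is a textbook concentration inequality. So I do not anticipate a real obstacle — the main thing to be careful about is stating precisely why disjoint support of $x,y$ forces independence of the two linear functionals of $v$, which is immediate from independence of the coordinates $(y_i)_{i=1}^n$. This lemma then feeds into the subsequent net/union-bound argument (over all pairs $x,y$ with disjoint support) to control $|J_{x,y}(a/2)|$ uniformly, which is where the factor $n\log n$ and the sub-Gaussian frame-bound estimate from Corollary \ref{C:frame_sG} will enter.
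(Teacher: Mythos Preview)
Your proposal is correct and follows essentially the same route as the paper: show $p\geq\gamma^2$ via independence of the coordinates (since $x,y$ have disjoint support), write $|J_{x,y}(a)|$ as a sum of i.i.d.\ Bernoulli variables, and apply a concentration inequality. The paper uses exactly your second option, Hoeffding's inequality with deviation $\lambda m=(\gamma^2/2)m$, to land precisely on $e^{-\gamma^4 m/2}$; your first option via the multiplicative Chernoff bound is a legitimate alternative that gives the (sharper) $e^{-\gamma^2 m/8}$, but the paper does not pursue it.
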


\begin{proof}
As $x$ and $y$ are supported on disjoint coordinates of $\ell_2^n$ and the coordinates of $v$ are independent, we have that
$$\textrm{Prob}\Big(|\langle x,v\rangle |\geq a\|x\|\textrm{ and }|\langle y,v\rangle|\geq a \|y\|\Big)=\textrm{Prob}\Big(|\langle x,v\rangle |\geq\|x\|\Big)\textrm{Prob}\Big(|\langle y,v\rangle |\geq a\|y\|\Big)\geq \gamma^2
$$
Thus, for each $1\leq j\leq m$ we have that
$$
\textrm{Prob}\Big(j\in J_{x,y}(a)\Big)= \textrm{Prob}\Big(|\langle x,v_j\rangle|\geq a\|x\|\textrm{ and }|\langle y,v_j\rangle|\geq a \|y\|\Big)\geq \gamma^2.$$
We now consider a sequence $(b_j)_{j=1}^m$ of iid Bernoulli random variables by setting
 $b_j=1$ if $j\in J_{x,y}(a)$ and $b_j=0$ if $j\not\in J_{x,y}(a)$.  Each $b_j$ has expectation at least $\gamma^2$.    By Hoeffding's inequality we have for all $\lambda>0$ that
$$\textrm{Prob}\Big(|J_{x,y}(a)|\leq (\gamma^2-\lambda)m\Big)= \textrm{Prob}\left(\sum_{j=1}^m b_j\leq (\gamma^2-\lambda)m\right) \leq e^{-2\lambda^2 m}.
$$
We choose $\lambda=\gamma^2/2$ to obtain our desired result.
\end{proof}

We now set more notation.  Let $(e_j)_{j=1}^n$ be the unit vector basis for $\ell_2^n$.
Let $1>\vp>0$.  For all $k\in\N$, the unit sphere of $\ell_2^k$ may be covered by $(3/\vp)^k$ balls of radius $\vp$ (Corollary 4.2.13 in \cite{V}). Thus, for all $I\subseteq [n]$ there exists a set of unit vectors $Z_{I,\vp}\subseteq span(e_j)_{j\in I}$ such that $Z_{I,\vp}$ is $\vp$-dense in the unit sphere of $span(e_j)_{j\in I}$ and $|Z_{I,\vp}|\leq (3/\vp)^{|I|}$.
We let $D_\vp:=\cup_{I\subseteq[n]}Z_{I,\vp}\times Z_{I^c,\vp}$.   Note that Lemma \ref{L:lower_bP_sG} applies to each fixed but arbitrary pair $(x,y)$ with $x\in span(e_j)_{j\in I}$ and $y\in span(e_j)_{j\in I^c}$ for some $I\subseteq[n]$. 
The following corollary extends the result to all  $(x,y)\in D_\vp$.

\begin{cor}\label{C:lower_bp_sG}
Let $(y_j)_{j=1}^n$ be an independent sequence of   random variables.  Let $(v_j)_{j=1}^m$ be an independent sequence in $\ell_2^n$ with the same distribution as $v=(y_1,y_2,...,y_n)$.  Suppose that $a,\gamma>0$  are such that 
$\textrm{Prob}(|\langle x, v\rangle|\geq a\|x\|_{L_2})\geq\gamma$ for all $x\in \ell_2^n$. Then with probability at least $1-\exp\big((\log(6\vp^{-1})n-2^{-1}\gamma^4 m \big)$, we have that
$$|J_{x,y}(a)|\geq (\gamma^2/2)m\quad\textrm{ for all }(x,y)\in D_\vp.$$
\end{cor}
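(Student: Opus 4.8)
The plan is to combine Lemma \ref{L:lower_bP_sG} with a union bound over the finite set $D_\vp$, so that the only genuine work is an elementary count of how many pairs $D_\vp$ contains. First I would note that every pair $(x,y)\in D_\vp$ has disjoint support: by construction $(x,y)\in Z_{I,\vp}\times Z_{I^c,\vp}$ for some $I\subseteq[n]$, so $x\in\spa(e_j)_{j\in I}$ and $y\in\spa(e_j)_{j\in I^c}$, and these subspaces are supported on complementary coordinate sets. Hence Lemma \ref{L:lower_bP_sG} applies to each individual pair $(x,y)\in D_\vp$ and gives $\textrm{Prob}\big(|J_{x,y}(a)|\leq(\gamma^2/2)m\big)\leq e^{-\gamma^4 m/2}$.

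Next I would estimate $|D_\vp|$. Since $|Z_{I,\vp}|\leq(3/\vp)^{|I|}$ and $|Z_{I^c,\vp}|\leq(3/\vp)^{|I^c|}$, for each fixed $I\subseteq[n]$ we have $|Z_{I,\vp}\times Z_{I^c,\vp}|\leq(3/\vp)^{|I|+|I^c|}=(3/\vp)^n$, a bound that does not depend on $I$. Summing over the $2^n$ subsets $I\subseteq[n]$ yields $|D_\vp|\leq 2^n(3/\vp)^n=(6/\vp)^n=\exp\!\big(n\log(6\vp^{-1})\big)$.

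Finally I would apply the union bound over the (finitely many) pairs in $D_\vp$: the probability that there exists $(x,y)\in D_\vp$ with $|J_{x,y}(a)|\leq(\gamma^2/2)m$ is at most $|D_\vp|\,e^{-\gamma^4 m/2}\leq\exp\!\big(n\log(6\vp^{-1})-2^{-1}\gamma^4 m\big)$, and passing to the complementary event gives the claimed bound. I do not expect a real obstacle here beyond bookkeeping; the one point to keep in view is that the exponents coming from the $\vp$-nets of the two complementary subspaces add to exactly $n$ no matter what $|I|$ is, which is precisely what prevents the $2^n$ factor from being inflated to a larger power of $3/\vp$ and thus keeps the final exponent linear in $n$.
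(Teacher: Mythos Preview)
Your proposal is correct and follows essentially the same approach as the paper: bound $|D_\vp|$ by $\sum_{I\subseteq[n]}(3/\vp)^{|I|}(3/\vp)^{|I^c|}=2^n(3/\vp)^n=(6/\vp)^n$, apply Lemma~\ref{L:lower_bP_sG} to each pair, and take a union bound. Your explicit observation that every $(x,y)\in D_\vp$ has disjoint support is a helpful addition but is otherwise just making explicit what the paper states in the paragraph preceding the corollary.
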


\begin{proof}

  The cardinality of $D_\vp$ is at most 
$$
|D_\vp|\leq \sum_{I\subseteq[n]}  |Z_{I,\vp}| |Z_{I^c,\vp}| \leq \sum_{I\subseteq[n]}  \left(\frac{3}{\vp}\right)^{|I|}\left(\frac{3}{\vp}\right)^{|I^c|}=2^n 3^{n} \vp^{-n}=6^n \vp^{-n}.
$$

By using Lemma \ref{L:lower_bP_sG} and a union bound, we have that the probability that 
$|J_{x,y}(a)|\geq (\gamma^2/2)m$ for every $(x,y)\in D_\vp$ is at least,
\begin{align*}
1-\sum_{(x,y)\in D_\vp}\textrm{Prob}\Big(|J_{x,y}(a)|\leq (\gamma^2/2)m\Big)&\geq 1-|D_\vp| e^{-2^{-1} \gamma^4 m}\\
&\geq 1-6^n\vp^{-n} e^{-2^{-1}\gamma^4 m}\\
&=1-\exp\left((\log(6\vp^{-1})n-2^{-1}\gamma^4 m \right).
\end{align*}
\end{proof}

The following theorem extends Corollary \ref{C:lower_bp_sG} to all of $\ell_2^n$.

\begin{thm}\label{T:all}
Let $m,n\in\N$ and $\vp,a,\gamma>0$ such that $\vp\leq 8^{-1} \gamma a$. Suppose that $(m^{-1/2} v_j)_{j=1}^m$ is a frame of $\ell_2^n$ with upper frame bound $2$ and that $|J_{x,y}(a)|\geq \gamma^2 m /2$ for all $(x,y)\in D_\vp.$ Then, $|J_{x,y}(a/2)|\geq \gamma^2 m/4$ for all $x,y\in \ell_2^n$ with disjoint support.

\end{thm}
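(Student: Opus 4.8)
The plan is to pass from an arbitrary pair $(x,y)$ with disjoint support to a nearby pair in the net $D_\vp$, and then control the damage done by the approximation using the upper frame bound. First I would normalize: it suffices to prove the claim for unit vectors $x,y$ with disjoint support, since $J_{x,y}(b)$ only depends on $x,y$ through the normalized directions $x/\|x\|$, $y/\|y\|$ (the conditions $|\langle x,v_j\rangle|\ge b\|x\|$ are homogeneous of degree zero in each of $x$ and $y$ separately). So fix unit vectors $x\in\spa(e_j)_{j\in I}$ and $y\in\spa(e_j)_{j\in I^c}$ for the appropriate $I\subseteq[n]$ determined by the supports, and choose $x'\in Z_{I,\vp}$, $y'\in Z_{I^c,\vp}$ with $\|x-x'\|<\vp$ and $\|y-y'\|<\vp$, so that $(x',y')\in D_\vp$ and by hypothesis $|J_{x',y'}(a)|\ge\gamma^2 m/2$.

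Next I would show $J_{x',y'}(a)\subseteq J_{x,y}(a/2)$, which immediately gives $|J_{x,y}(a/2)|\ge\gamma^2 m/2\ge\gamma^2 m/4$. For $j\in J_{x',y'}(a)$ we have $|\langle x',v_j\rangle|\ge a$ and $|\langle y',v_j\rangle|\ge a$; we need $|\langle x,v_j\rangle|\ge a/2$ and $|\langle y,v_j\rangle|\ge a/2$. By the reverse triangle inequality $|\langle x,v_j\rangle|\ge|\langle x',v_j\rangle|-|\langle x-x',v_j\rangle|\ge a-\|v_j\|\,\|x-x'\|$, so it would suffice to have $\|v_j\|\,\vp\le a/2$ for the relevant $j$. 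The subtlety is that a single $v_j$ can have large norm — in fact $\|v_j\|$ is typically of order $\sqrt n$ — so a uniform bound $\|v_j\|\vp\le a/2$ is hopeless with $\vp$ of order $\gamma a$. This is where I expect the main obstacle to lie, and it is exactly why the hypothesis is phrased in terms of the \emph{upper frame bound} rather than individual vector norms. The resolution: I would not try to bound $|\langle x-x',v_j\rangle|$ for every $j$ individually, but rather control it on average over $j\in[m]$, and discard the few bad indices. Concretely, $\sum_{j=1}^m|\langle x-x',v_j\rangle|^2\le 2m\|x-x'\|^2\le 2m\vp^2$ by the upper frame bound $2$ (applied to $m^{-1/2}v_j$). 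Hence the set $B_x=\{j\in[m]: |\langle x-x',v_j\rangle|>a/2\}$ satisfies $|B_x|(a/2)^2\le 2m\vp^2$, i.e. $|B_x|\le 8m\vp^2/a^2$. With $\vp\le 8^{-1}\gamma a$ this gives $|B_x|\le 8m(\gamma a/8)^2/a^2 = m\gamma^2/8$. The same bound holds for $B_y=\{j:|\langle y-y',v_j\rangle|>a/2\}$.

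Putting it together: for $j\in J_{x',y'}(a)\setminus(B_x\cup B_y)$ we get $|\langle x,v_j\rangle|\ge a - a/2 = a/2$ and likewise $|\langle y,v_j\rangle|\ge a/2$, so $j\in J_{x,y}(a/2)$. Therefore
$$
|J_{x,y}(a/2)|\ \ge\ |J_{x',y'}(a)| - |B_x| - |B_y|\ \ge\ \frac{\gamma^2 m}{2} - \frac{\gamma^2 m}{8} - \frac{\gamma^2 m}{8}\ =\ \frac{\gamma^2 m}{4},
$$
which is the desired conclusion. The only real content beyond bookkeeping is the averaging step that trades the pointwise control of $\|v_j\|$ for the $\ell_2$-in-$j$ control supplied by the upper frame bound; everything else is the reverse triangle inequality and counting. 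I would double-check that the constant chase is tight enough — the hypothesis $\vp\le 8^{-1}\gamma a$ is precisely what makes each of $|B_x|,|B_y|$ at most $\gamma^2 m/8$, leaving room for the final $\gamma^2 m/4$ bound — and note that normalizing at the start is legitimate because $D_\vp$ consists of unit vectors and the net property is stated for the unit spheres of the coordinate subspaces.
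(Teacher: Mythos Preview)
Your proof is correct and follows essentially the same approach as the paper's: approximate $(x,y)$ by a net pair $(x',y')\in D_\vp$, and use the upper frame bound $2$ for $(m^{-1/2}v_j)_{j=1}^m$ together with a Chebyshev-type count to bound the number of indices $j$ where $|\langle x-x',v_j\rangle|>a/2$ (your sets $B_x,B_y$). The paper runs the same computation as a proof by contradiction --- assuming $|J_{x,y}(a/2)|<\gamma^2 m/4$ and deducing $\vp>8^{-1}\gamma a$ --- but the key inequality $|B_x|\,(a/2)^2\le 2m\vp^2$ and the resulting bound $|B_x|\le\gamma^2 m/8$ are identical to yours.
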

\begin{proof}
Let $x,y\in \ell_2^n$ with $x\in span_{j\in I}e_j$ and $y\in span_{j\in I^c}e_j$ for some $I\subseteq[n]$.  By scaling, we may assume without loss of generality that $\|x\|=\|y\|=1$.  Choose some $(x_0,y_0)\in D_\vp$ with $\|x-x_0\|<\vp$ and $\|y-y_0\|<\vp$.  For the sake of contradiction, we assume that $|J_{x,y}(a/2)|<\gamma^2 m/4$.  Thus, $|J_{x_0,y_0}(a)\setminus J_{x,y}(a/2)|>\gamma^2 m/4$.  We have that either $|\{j\in [m]:|\langle x_0-x,v_j\rangle|>a/2\}|> \gamma^2 m/8$ or $|\{j\in [m]:|\langle y_0-y,v_j\rangle|>a/2\}|> \gamma^2 m/8$.  We let $J=\{j\in [m]:|\langle x_0-x,v_j\rangle|>a/2\}$ and assume without loss of generality that $|J|>\gamma^2 m/8$.

Let $T:\ell_2^n\rightarrow \ell_2^m$ be the the analysis operator of the frame $(m^{-1/2}v_{j})_{j=1}^m$.  That is, $T(z)=\big(\langle z, m^{-1/2}{v_j}\rangle\big)_{j=1}^m$ for all $z\in \ell_2^n$.  As $(m^{-1/2}v_j)_{j=1}^m$ has upper frame bound $2$, we have that $T$ has operator norm $\|T\|\leq 2^{1/2}$.  We now have that
$$
 \|T(x-x_0)\|^2\leq 2\|x-x_0\|^2< 2\vp^2.
$$
On the other hand, we have that
\begin{align*}
 \|T(x-x_0)\|^2&=\sum_{j=1}^m |\langle x-x_0,m^{-1/2}v_j\rangle|^2\\
 &=m^{-1}\sum_{j=1}^m |\langle x_0-x,v_j\rangle|^2\\
 &> m^{-1}\sum_{j\in J} (a/2)^2\\
 &> m^{-1}(  \gamma^2 m/8) (a/2)^2\\
 &=32^{-1}\gamma^2a^2 
\end{align*}
Thus, by combining our upper bound for $\|T(x-x_0)\|^2$ with our lower bound for $\|T(x-x_0)\|^2$, we have that $32^{-1}\gamma^2a^2<2\vp^2$.  This contradicts that $\vp\leq8^{-1} \gamma a$.
\end{proof}

The following is an adaptation of Lemma \ref{L:signs} to the finite-dimensional setting where $(v_j)_{j=1}^m$ is a frame of $\ell_2^n$.

\begin{lem}\label{L:signs_sG}
Let $(v_j)_{j=1}^m$ be a frame of $\ell_2^n$ with upper frame bound $2$.  Let $f,g\in\ell_2^n$ with $f=\sum_{i=1}^n a_i e_i$ and $g=\sum_{i=1}^n b_i e_i$.  For all $1\leq i\leq n$ let $\vp_i=sign(a_i b_i)$ and $\delta_i=b_i-\vp_i a_i$.  Let $h=\sum_{i=1}^n \vp_i a_i e_i$.  Then,
$$\big\| \big(|\langle f,v_j\rangle| - |\langle h,v_j\rangle|\big)_{j=1}^m\big\|_{\ell_2^m}\leq \big\| \big(|\langle f,v_j\rangle| - |\langle g,v_j\rangle|\big)_{j=1}^m\big\|_{\ell_2^m}+2^{1/2}(\sum_{i=1}^n \delta_i^2)^{1/2}
$$
\end{lem}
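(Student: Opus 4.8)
The plan is to mimic the proof of Lemma \ref{L:signs}, with the orthonormal sequence $(y_j)$ and the $L_2(\R)$ norm replaced by the analysis operator of the frame $(v_j)_{j=1}^m$ and the $\ell_2^m$ norm. First I would introduce $T:\ell_2^n\to\ell_2^m$, the analysis operator of $(v_j)_{j=1}^m$, so that $T(z)=(\langle z,v_j\rangle)_{j=1}^m$, and record that since $(v_j)_{j=1}^m$ has upper frame bound $2$ we have $\|Tz\|_{\ell_2^m}^2\le 2\|z\|_{\ell_2^n}^2$, i.e. $\|T\|\le 2^{1/2}$. Writing $|w|$ for the coordinatewise absolute value of $w\in\ell_2^m$, the two quantities in the statement are $\big\||Tf|-|Th|\big\|_{\ell_2^m}$ and $\big\||Tf|-|Tg|\big\|_{\ell_2^m}$.

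Next I would set $d:=g-h=\sum_{i=1}^n\delta_i e_i$, which is immediate from $b_i=\vp_i a_i+\delta_i$, so that $\|d\|_{\ell_2^n}^2=\sum_{i=1}^n\delta_i^2$. The triangle inequality in $\ell_2^m$ gives
\[
\big\||Tf|-|Th|\big\|_{\ell_2^m}\le \big\||Tf|-|Tg|\big\|_{\ell_2^m}+\big\||Tg|-|Th|\big\|_{\ell_2^m},
\]
and the pointwise reverse triangle inequality $\big||\langle g,v_j\rangle|-|\langle h,v_j\rangle|\big|\le|\langle g-h,v_j\rangle|$, applied coordinatewise, yields
\[
\big\||Tg|-|Th|\big\|_{\ell_2^m}\le \|T(g-h)\|_{\ell_2^m}=\|Td\|_{\ell_2^m}\le 2^{1/2}\|d\|_{\ell_2^n}=2^{1/2}\Big(\sum_{i=1}^n\delta_i^2\Big)^{1/2}.
\]
Combining the two displays gives exactly the claimed inequality.

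There is essentially no hard step here; the only points requiring care are that the upper frame bound $2$ enters as the operator-norm factor $2^{1/2}$ (this is why a $2^{1/2}$ appears in front of $(\sum_{i=1}^n\delta_i^2)^{1/2}$, in contrast to Lemma \ref{L:signs}, where the $y_j$ were orthonormal and the corresponding factor was $1$), and that the estimate on $\big\||Tg|-|Th|\big\|_{\ell_2^m}$ passes through the coordinatewise reverse triangle inequality before invoking $\|T\|\le 2^{1/2}$. In particular, no information about the individual inner products $\langle f,v_j\rangle$ and no lower frame bound is needed.
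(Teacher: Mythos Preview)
Your proof is correct and follows essentially the same approach as the paper: the paper writes $g=h+\sum_i\delta_i e_i$, applies the (reverse) triangle inequality in $\ell_2^m$ together with the coordinatewise reverse triangle inequality to pass from $|Tg|$ to $|Th|$, and then uses the upper frame bound $2$ to control $\|T(\sum_i\delta_i e_i)\|_{\ell_2^m}$ by $2^{1/2}(\sum_i\delta_i^2)^{1/2}$. Your write-up makes the role of the analysis operator and the factor $2^{1/2}$ a bit more explicit, but the argument is the same.
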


\begin{proof}
 We have that
\begin{align*}
 \big\| \big(|\langle f,v_j\rangle| - |\langle g,v_j\rangle|\big)_{j=1}^m\big\|_{\ell_2^m}
&= \big\| \big(|\langle f,v_j\rangle| - |\langle h+\sum_{i=1}^n \delta_i e_i),v_j\rangle|\big)_{j=1}^m\big\|_{\ell_2^m}\\
&\geq \big\| \big(|\langle f,v_j\rangle| - |\langle h,v_j\rangle|\big)_{j=1}^m\big\|_{\ell_2^m}- \big\| \big(\langle \sum_{i=1}^n \delta_i e_i,v_j\rangle\big)_{j=1}^m\big\|_{\ell_2^m}\\
&\geq \big\| \big(|\langle f,v_j\rangle| - |\langle h,v_j\rangle|\big)_{j=1}^m\big\|_{\ell_2^m}-2^{1/2}(\sum_{i=1}^n \delta_i^2)^{1/2}\\
&\hspace{2cm}\big(\textrm{as 2 is an upper frame bound of $(v_j)_{j=1}^m$.})
\end{align*}

\end{proof}

The following is an adaptation of Lemma \ref{L:lower_b} to the finite-dimensional setting.

\begin{lem}\label{L:lower_b_sG}
Let $(v_j)_{j=1}^m$ be a sequence of vectors in $\ell_2^n$. 
Let $x,y\in \ell_2^n$ be vectors such that $|J_{x,y}(a/2)|\geq (\gamma^2/4)m$. Then,
$$2^{-1}a\gamma \min(\|x\|,\|y\|)\leq \big\| \big(|\langle x+y,m^{-1/2}v_j\rangle| - |\langle x-y,m^{-1/2}v_j\rangle|\big)_{j=1}^m\big\|_{\ell_2^m}.
$$
\end{lem}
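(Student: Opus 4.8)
The plan is to mimic the proof of Lemma~\ref{L:lower_b}, replacing the integral over $[0,1]$ by the normalized sum over $[m]$ and exploiting the elementary pointwise identity $\big|\,|u+w|-|u-w|\,\big| = 2\min(|u|,|w|)$, valid for all $u,w\in\R$. Setting $u=\langle x,v_j\rangle$ and $w=\langle y,v_j\rangle$ and using linearity of the inner product gives $|\langle x+y,v_j\rangle|-|\langle x-y,v_j\rangle| = \pm\,2\min(|\langle x,v_j\rangle|,|\langle y,v_j\rangle|)$ for each $j$, so after squaring the sign is irrelevant.

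First I would verify the identity: it suffices to treat $u,w\ge 0$, since replacing $u$ by $-u$ fixes $\min(|u|,|w|)$ and only interchanges $|u+w|$ with $|u-w|$; for $u\ge w\ge 0$ both sides equal $2w$, and for $w\ge u\ge 0$ both equal $2u$. Then I would expand $\big\| \big(|\langle x+y,m^{-1/2}v_j\rangle| - |\langle x-y,m^{-1/2}v_j\rangle|\big)_{j=1}^m\big\|_{\ell_2^m}^2 = m^{-1}\sum_{j=1}^m 4\min\big(|\langle x,v_j\rangle|,|\langle y,v_j\rangle|\big)^2$, discard all terms with $j\notin J_{x,y}(a/2)$ (they are nonnegative), and on the surviving index set use the defining inequalities $|\langle x,v_j\rangle|\ge (a/2)\|x\|$ and $|\langle y,v_j\rangle|\ge (a/2)\|y\|$ to bound each remaining summand below by $(a/2)^2\min(\|x\|,\|y\|)^2$.

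Finally, invoking the hypothesis $|J_{x,y}(a/2)|\ge (\gamma^2/4)m$, the right-hand side is at least $m^{-1}\cdot(\gamma^2/4)m\cdot 4(a/2)^2\min(\|x\|,\|y\|)^2 = (a^2\gamma^2/4)\min(\|x\|,\|y\|)^2$; taking square roots yields the claimed inequality. There is no genuine obstacle here — it is a direct discretization of Lemma~\ref{L:lower_b} — and the only step requiring a little care is tracking the $m^{-1/2}$ normalization so that the factor $m^{-1}$ is exactly absorbed by the cardinality lower bound on $J_{x,y}(a/2)$, leaving a constant independent of $m$ and $n$.
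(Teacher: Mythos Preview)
Your proposal is correct and follows essentially the same approach as the paper's proof: both expand the squared $\ell_2^m$-norm using the identity $\big|\,|u+w|-|u-w|\,\big|=2\min(|u|,|w|)$, restrict the sum to $J_{x,y}(a/2)$, apply the defining lower bounds $|\langle x,v_j\rangle|\ge(a/2)\|x\|$ and $|\langle y,v_j\rangle|\ge(a/2)\|y\|$, and then invoke the cardinality hypothesis to cancel the $m^{-1}$ factor. Your explicit verification of the pointwise identity is a nice touch that the paper leaves implicit.
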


\begin{proof}
We have that 
\begin{align*}
\big\| \big(|\langle x+y,m^{-1/2}v_j\rangle| - |\langle x-y,m^{-1/2}v_j\rangle|\big)_{j=1}^m\big\|_{\ell_2^m}^2&=m^{-1}\sum_{j=1}^m (2 \min(|\langle x,v_j\rangle|,|\langle y,v_j\rangle|))^2 \\ 
&\geq m^{-1}\sum_{j\in J_{x,y}(a/2)} (2\min(a2^{-1}\|x\|,a2^{-1}\|y\|))^2 \\
&= a^2 m^{-1} |J_{x,y}(a/2)| (\min(\|x\|,\|y\|))^2 \\
&\geq a^2 m^{-1} (4^{-1}\gamma^2 m)(\min(\|x\|,\|y\|))^2 \\
&= 4^{-1}a^2 \gamma^2 (\min(\|x\|,\|y\|))^2 
\end{align*}

\end{proof}

We now state and prove the main theorem of this section.

\begin{thm}\label{T:main_P_sG}
There exists universal constants $c_1,c_2,c_3>0$ such that the following holds.
Let $(y_j)_{j=1}^n$ be an independent sequence of  mean-zero, variance-one, $K$-sub-Gaussian random variables.  Let $(v_j)_{j=1}^m$ be an independent sequence in $\ell_2^n$ with the same distribution as $v=(y_1,y_2,...,y_n)$. If $m\geq c_1 K^{16}\log(2K) n$ then with probability at least $1-\exp\left(-c_2 m \right)$ the frame $(\frac{1}{\sqrt{m}}v_j)_{j=1}^m\cup (e_i)_{i=1}^n$ does $c_3 K^{6}$-stable phase retrieval.   That is, if $T:\ell_2^n\rightarrow\ell_2^{m+n}$ is the analysis operator for the frame $(\frac{1}{\sqrt{m}}v_j)_{j=1}^m\cup (e_i)_{i=1}^n$ then for all $f,g\in\ell_2^{n}$ we have that 

$$
min(\| f-g\|_{\ell_2^n},\| f+g\|_{\ell_2^n}) \leq c_3 K^{6}
\big\| |Tf|-|Tg| \big\|_{\ell_2^{m+n}}. 
$$

\end{thm}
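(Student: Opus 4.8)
The plan is to mirror the structure of the proof of Theorem \ref{T:main2} (the $(2)\Rightarrow(1)$ direction of the infinite-dimensional result), but replacing the clean Hilbert-space identities on $[0,1]$ by their high-probability discretized analogues established in Lemma \ref{L:lower_bP_sG} through Theorem \ref{T:all}. First I would apply Lemma \ref{L:KL1} with $p=4$: since the $y_j$ are mean-zero, variance-one, $K$-sub-Gaussian, we obtain $a = 16^{-1}A_1 K^{-2}$ and $\gamma = a^2$ such that $\textrm{Prob}(|\langle x,v\rangle|\geq a\|x\|)\geq\gamma$ for all $x\in\ell_2^n$, where $v=(y_1,\dots,y_n)$. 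Then I would set $\vp = 8^{-1}\gamma a$ (so $\vp$ is polynomial in $K^{-1}$, specifically on the order of $K^{-6}$) and invoke two independent high-probability events: (i) by Corollary \ref{C:frame_sG}, if $m\geq 64C^6K^4 n$ then with probability $\geq 1-2\exp(-64^{-1}C^{-6}K^{-4}m)$ the rescaled frame $(m^{-1/2}v_j)_{j=1}^m$ has frame bounds $1/2$ and $2$; and (ii) by Corollary \ref{C:lower_bp_sG}, with probability $\geq 1-\exp(\log(6\vp^{-1})n - 2^{-1}\gamma^4 m)$ we have $|J_{x,y}(a)|\geq (\gamma^2/2)m$ for all $(x,y)\in D_\vp$. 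The condition $m\geq c_1 K^{16}\log(2K) n$ is exactly what is needed to make the exponent $\log(6\vp^{-1})n - 2^{-1}\gamma^4 m$ negative and proportional to $-m$: $\gamma^4$ is on the order of $K^{-16}$, $\log(6\vp^{-1})$ is on the order of $\log(2K)$, so $m \gtrsim K^{16}\log(2K)\, n$ forces $2^{-1}\gamma^4 m \geq 2\log(6\vp^{-1})n$, say. Intersecting the two events, and using Theorem \ref{T:all} (whose hypothesis $\vp\leq 8^{-1}\gamma a$ we have arranged), we get that with probability $\geq 1-\exp(-c_2 m)$ the frame has upper bound $2$ and simultaneously $|J_{x,y}(a/2)|\geq \gamma^2 m/4$ for all $x,y\in\ell_2^n$ with disjoint support.

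On this good event I would then run the deterministic stability computation exactly as in the proof of Theorem \ref{T:main2}. Given $f=\sum a_i e_i$, $g=\sum b_i e_i$, set $\vp_i=\mathrm{sign}(a_ib_i)$, $\delta_i=b_i-\vp_i a_i$, $h=\sum \vp_i a_i e_i$, let $J=\{i:\vp_i=1\}$, $x=\sum_{i\in J}a_i e_i$, $y=\sum_{i\notin J}a_i e_i$, so $f=x+y$, $h=x-y$, and $x,y$ have disjoint support. Assume WLOG $\|x\|\geq\|y\|$ (otherwise swap $g\leftrightarrow -g$). The first ingredient is the analogue of the indicator-coordinate estimate: since the frame $(e_i)_{i=1}^n$ is literally part of our frame, the block of $Tf-Tg$ coming from the $(e_i)$ coordinates gives $(\sum_i \delta_i^2)^{1/2} = \|f-g\|$-worth of control exactly, so $(\sum_i\delta_i^2)^{1/2}\leq \||Tf|-|Tg|\|_{\ell_2^{m+n}}$, and also $\||Tf|-|Tg|\|_{\ell_2^{m+n}}$ dominates the $v_j$-block $\|(|\langle f,m^{-1/2}v_j\rangle|-|\langle g,m^{-1/2}v_j\rangle|)_j\|$. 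Feeding these into Lemma \ref{L:signs_sG} bounds $\|(|\langle f,m^{-1/2}v_j\rangle|-|\langle h,m^{-1/2}v_j\rangle|)_j\|$ by $\||Tf|-|Tg|\| + 2^{1/2}(\sum\delta_i^2)^{1/2} \leq (1+2^{1/2})\||Tf|-|Tg|\|$. The second ingredient is Lemma \ref{L:lower_b_sG}, which on the good event gives $2^{-1}a\gamma\|y\| \leq \|(|\langle x+y,m^{-1/2}v_j\rangle|-|\langle x-y,m^{-1/2}v_j\rangle|)_j\| = \|(|\langle f,m^{-1/2}v_j\rangle|-|\langle h,m^{-1/2}v_j\rangle|)_j\|$, hence $\|y\| \lesssim a^{-1}\gamma^{-1}\||Tf|-|Tg|\|$.

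From here I would assemble the bound on $\|f-g\|$. Since $f-g = (x+y)-(x-y) - \sum_i\delta_i e_i$ in the obvious sense — more precisely, writing everything in coordinates, $\|f-g\|\leq \|h - (x-y)\|_{\text{part}} + \dots$; cleaner: $f-g$ decomposes as $(x+y)-(x-y)$ up to the $\delta$-correction, and $\|(x+y)-(x-y)\| = 2\|y\|$, while the $\delta$-correction contributes $(\sum\delta_i^2)^{1/2}$. So $\|f-g\| \leq 2\|y\| + (\sum_i\delta_i^2)^{1/2} \lesssim (a^{-1}\gamma^{-1}+1)\||Tf|-|Tg|\|$. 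Now substituting $a = 16^{-1}A_1 K^{-2}$, $\gamma = a^2 = 256^{-1}A_1^2 K^{-4}$, we get $a^{-1}\gamma^{-1}$ on the order of $a^{-3}$, i.e.\ on the order of $K^6$, which yields the claimed stability constant $c_3 K^6$ for an absolute constant $c_3$ (absorbing the $A_1$ factors and the $2^{1/2}, 1+2^{1/2}$ constants). I expect the main obstacle to be bookkeeping the constants so that all the exponents line up: one must verify that the single choice $m\geq c_1 K^{16}\log(2K)n$ simultaneously (a) exceeds $64C^6K^4 n$ for Corollary \ref{C:frame_sG}, and (b) makes $\log(6\vp^{-1})n - 2^{-1}\gamma^4 m$ at most $-c_2 m$ in Corollary \ref{C:lower_bp_sG}, with $\vp$, $\gamma$ expressed in terms of $K$; the $\log(2K)$ factor is there precisely to dominate $\log(6\vp^{-1}) \sim \log(K^6) \sim \log K$ after the $\gamma^4 \sim K^{-16}$ is cleared. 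The stability computation itself is routine once the good event is in hand, being a line-by-line transcription of the proof of Theorem \ref{T:main2} with $\|\cdot\|_{L_2([0,1])}$ replaced by the $v_j$-block norm and $\|\cdot\|_{L_2((1,\infty))}$ replaced by the $(e_i)$-block norm.
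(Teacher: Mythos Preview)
Your proposal is correct and follows essentially the same route as the paper's proof: the same choice of $a=16^{-1}A_1K^{-2}$, $\gamma=a^2$, $\vp=8^{-1}\gamma a$ from Lemma~\ref{L:KL1}, the same union of the two high-probability events from Corollaries~\ref{C:frame_sG} and~\ref{C:lower_bp_sG} followed by Theorem~\ref{T:all}, and the same deterministic endgame via Lemmas~\ref{L:signs_sG} and~\ref{L:lower_b_sG}. The only cosmetic difference is that the paper packages the final inequality as a single Cauchy--Schwarz chain starting from $(1+(1+\sqrt2)^2)^{1/2}\||Tf|-|Tg|\|$ and working down to $4^{-1}a\gamma\|f-g\|$, whereas you bound $\|y\|$ and $(\sum\delta_i^2)^{1/2}$ separately and then use $\|f-g\|\le 2\|y\|+(\sum\delta_i^2)^{1/2}$; both give a stability constant of order $a^{-1}\gamma^{-1}\sim K^6$.
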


\begin{proof}

Let $a=16^{-1}A_1 K^{-2}$ and $\gamma=a^2$.  By Lemma \ref{L:KL1}, we have that 
$$\textrm{Prob}(|\langle x, v\rangle|\geq a\|x\|_{\ell_2^n})\geq\gamma\qquad\textrm{ for all }x\in \ell_2^n.
$$

Let $\vp=8^{-1} \gamma a$. By Corollary \ref{C:lower_bp_sG}, we have with probability at least 
$1-\exp\big((\log(6\vp^{-1})n-2^{-1}\gamma^4 m \big)$ that
$|J_{x,y}(a)|\geq \gamma^2/4$
for every $(x,y)\in D_\vp$.   Suppose that $m\geq 64C^6K^4n$.   By Corollary \ref{C:frame_sG} we have with probability at least
$1-2\exp(-64^{-1}C^{-6}K^{-4}m)$  that $(m^{-1/2}v)_{j=1}^m$ is a frame of $\ell_2^n$ with upper frame bound $2$.   We substitute $\gamma=8^{-2}K^{-4}$ and $\vp=8^{-4}K^{-6}$ then use a union bound on the probabilities and Theorem \ref{T:all} to conclude that with probability at least 
$$1-\Big(2\exp(-64^{-1}C^{-6}K^{-4}m)+\exp\big((\log(6(8^4 K^6))n-2^{-1}8^{-8}K^{-16} m \big)\Big)$$
that 
 $|J_{x,y}(a/2)|\geq (\gamma^2/4)m$ for all $x,y\in \ell_2^n$ with disjoint support.  As $K\geq 2^{-1/2}$ we have that $\log(2K)>0$.  Hence, there exists constants $c_1,c_2>0$ such that if $m\geq c_1 K^{16}\log(2K) n$ then with probability at least 
 $1-\exp\left(-c_2 m \right)$ we have that $|J_{x,y}(a/2)|\geq (\gamma^2/4)m$ for all $x,y\in \ell_2^n$ with disjoint support and that $(m^{-1/2}v_j)_{j=1}^m$ is a frame of $\ell_2^n$ with upper frame bound $2$.  

 For the remainder of the proof we assume that 
$(m^{-1/2}v_j)_{j=1}^m$ is a frame of $\ell_2^n$ with upper frame bound $2$, and that $|J_{x,y}(a/2)|\geq (\gamma^2/4)m$ for all $x,y\in \ell_2^n$ with disjoint support. Our goal now is to determine a constant $c_3>0$ such that $(m^{-1/2}v_j)_{j=1}^m\cup(e_i)_{i=1}^n$ does $c_3K^6$-stable phase retrieval.

Let $f,g\in \ell_2^n$ with $f=\sum_{j=1}^n a_j e_j$ and $g=\sum_{j=1}^n b_j e_j$.   For all $1\leq j\leq n$ we let $\vp_j= sign(a_jb_j)$ and $\delta_j= b_j  - \vp_j a_j$.  
We let $I=\{j\in\N\,:\, \vp_j=1\}$.  Let $x=\sum_{j\in I}a_j e_j$ and $y=\sum_{j\in I^c} a_j e_j$.  We have that $\sum_{j=1}^n a_j y_j= x+y$ and $\sum_{j=1}^n \vp_j a_j e_j=x-y$.

  Without loss of generality, we may assume that $\|x\|\geq \|y\|$.  
\begin{align*}
(1+(&1+\sqrt{2})^2)^{1/2}\Big(\big\| \big(|\langle f,m^{-1/2}v_j\rangle| - |\langle g,m^{-1/2}v_j\rangle|\big)_{j=1}^m\big\|^2_{\ell_2^m}+\big\| \big(|\langle f,e_i\rangle| - |\langle g,e_i\rangle|\big)_{i=1}^n\big\|^2_{\ell_2^n}\Big)^{1/2}\\
&\geq \big\| \big(|\langle f,m^{-1/2}v_j\rangle| - |\langle g,m^{-1/2}v_j\rangle|\big)_{j=1}^m\big\|_{\ell_2^m}+(1+\sqrt{2})\big\| \big(|\langle f,e_i\rangle| - |\langle g,e_i\rangle|\big)_{i=1}^n\big\|_{\ell_2^n}\\
&= \big\| \big(|\langle f,m^{-1/2}v_j\rangle| - |\langle g,m^{-1/2}v_j\rangle|\big)_{j=1}^m\big\|_{\ell_2^m}+(1+\sqrt{2})\big(\sum_{i=1}^n \delta_i^2\big)^{1/2} \\
 &\geq \big\| \big(|\langle x+y,m^{-1/2}v_j\rangle| - |\langle x-y,m^{-1/2}v_j\rangle|\big)_{j=1}^m\big\|_{\ell_2^m}+\big(\sum_{i=1}^n \delta_i^2\big)^{1/2} \qquad \big(\textrm{by Lemma \ref{L:signs_sG}}\big)\\
 &\geq 2^{-1}a\gamma \| y\|+\big(\sum \delta_i^2\big)^{1/2}\qquad \big(\textrm{by Lemma \ref{L:lower_b_sG}}\big)\\
 &=4^{-1}a\gamma\|(x+y)-(x-y)\|+\big(\sum \delta_i^2\big)^{1/2}\\
&= 4^{-1}a\gamma \left\| \sum a_i e_i - \sum \vp_i a_i e_i\right\| +\big(\sum \delta_i^2\big)^{1/2}\\
&\geq 4^{-1}a\gamma \left\| \sum a_i e_i - \sum (\vp_i a_i+\delta_i) e_i\right\|\hspace{3cm}\textrm{( as $a\gamma\leq1$)}\\
&= 4^{-1}a\gamma \| f- g\|
\end{align*}
This proves that the frame $(\frac{1}{\sqrt{m}}v_j)_{j=1}^m\cup (e_i)_{i=1}^n$ does $(1+(1+2^{1/2})^2)^{1/2}4a^{-1}\gamma^{-1}$-stable phase retrieval.  As $a=8^{-1}K^{-2}$ and $\gamma=8^{-2} K^{-4}$ the frame $(\frac{1}{\sqrt{m}}v_j)_{j=1}^m\cup (e_i)_{i=1}^n$ does $c_3 K^6$-stable phase retrieval where $c_3=(1+(1+2^{1/2})^2)^{1/2}2^{11}$.
\end{proof}

\section{Stability for phase retrieval in finite dimensions}\label{S:5}

In the previous section we proved that if we used sub-Gaussian random variables then with high probability our construction does phase retrieval with stability constant independent of the dimension $n$ when using $m$ on the order of $n$ random vectors.  Without sub-Gaussian assumptions, 
 we prove in Theorem \ref{T:main_P} that with high probability our construction does phase retrieval with stability constant independent of the dimension $n$ when using $m$ on the order of $n\log n$ random vectors.

As we did in Section \ref{S:4} before the statement of Lemma \ref{L:lower_bP_sG}, we now set some notation and give motivation for how we will proceed.  Let $(y_j)_{j=1}^n$ be an independent sequence of  mean-zero, variance-one random variables and let $a,\gamma>0$ be constants such that
$\textrm{Prob}(|y|\geq a\|y\|_{L_2})\geq\gamma$ for all $y\in span(y_j)_{1\leq j\leq n}$.    Let $(v_j)_{j=1}^m$ be an independent sequence of random vectors in $\ell_2^n$ each with the same distribution as $v=(y_1,y_2,...,y_n)$.

For each $x,y,z\in \ell_2^n$ where $x$ and $y$ have disjoint support and $b,s>0$ we let $J_{x,y}^{z}(b,s)$ be the set of all $j$ in $\{1,2,...,m\}$ such that 
\begin{enumerate}
\item[(i)] $|\langle x,v_j\rangle|\geq b\|x\|_{L_2}$ and $|\langle y,v_j\rangle|\geq b\|y\|_{L_2}$,\\
\item[(ii)] $|\langle z, v_j\rangle|\leq s\|z\|_{L_2}$,\\
\item[(iii)] $\|v_j\|\leq 2 \gamma^{-1}  n^{1/2}$.
\end{enumerate}

Condition (i) is the same as \eqref{E:J_sG} in Section \ref{S:4} and guarantees that both $|\langle x,v_j\rangle|$ and $|\langle y,v_j\rangle|$ are relatively large.  Conditions (ii) and (iii) will allow us to prove that if both $|\langle x,v_j\rangle|$ and $|\langle y,v_j\rangle|$ are relatively large and $x',y'\in\ell_2^n$ are close to $x$ and $y$ respectively then both $|\langle x',v_j\rangle|$ and $|\langle y',v_j\rangle|$ are relatively large.  In other words, conditions (ii) and (iii) imply that condition (i) is stable under small perturbations. We did not need conditions $(ii)$ and $(iii)$ in Section \ref{S:4} because there we had with high probability that $(\frac{1}{\sqrt{m}}v_j)_{j=1}^m$ is a frame of $\ell_2^n$ with lower frame bound $1/2$ and upper frame bound $2$.

The following Lemma gives that for all $x,y\in \ell_2^n$ with disjoint support and all $z\in\ell_2^n$ we have with high probability that  $|J^z_{x,y}(a,2\gamma^{-1})|\geq (\gamma^2/4)m$.

\begin{lem}\label{L:lower_bP}
Let $(y_j)_{j=1}^n$ be an orthonormal sequence of independent mean-zero random variables in $L_2([0,1])$ and let $a,\gamma>0$  so that $\textrm{Prob}(|f|\geq a\|f\|_{L_2})\geq\gamma$ for all $f\in span(y_j)_{j=1}^n$.  Then for all $m\in\N$, all $x,y\in \ell_2^n$ with disjoint support, and all $z\in\ell_2^n$ we have that 
$$\textrm{Prob}\Big(|J^z_{x,y}(a,2\gamma^{-1})|\leq 4^{-1}\gamma^2m\Big)\leq e^{-8^{-1} \gamma^4 m}.
$$
\end{lem}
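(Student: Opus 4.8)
The plan is to follow the template of the proof of Lemma~\ref{L:lower_bP_sG}: first show that each fixed index $j\in\{1,\dots,m\}$ lies in $J^z_{x,y}(a,2\gamma^{-1})$ with probability at least $\gamma^2/2$, and then apply Hoeffding's inequality to the resulting iid Bernoulli sequence.

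First I would record that, since $(y_j)_{j=1}^n$ is orthonormal and mean-zero, the random vector $v=(y_1,\dots,y_n)$ is isotropic: $\|\langle w,v\rangle\|_{L_2}=\|w\|$ for every $w\in\ell_2^n$, and $\E\|v\|^2=\sum_{j=1}^n\E|y_j|^2=n$. Write $E_1,E_2,E_3$ for the events given by conditions (i), (ii), (iii) in the definition of $J^z_{x,y}(a,2\gamma^{-1})$, applied to a single copy $v$ of $(y_1,\dots,y_n)$. For $E_1$: because $x$ and $y$ have disjoint support, $\langle x,v\rangle$ and $\langle y,v\rangle$ are functions of disjoint, hence independent, blocks of $(y_j)_{j=1}^n$; applying the hypothesis $\textrm{Prob}(|f|\ge a\|f\|_{L_2})\ge\gamma$ to $f=\langle x,v\rangle$, which lies in the linear span of $(y_j)_{j=1}^n$, and likewise to $f=\langle y,v\rangle$, together with $\|\langle x,v\rangle\|_{L_2}=\|x\|$ and $\|\langle y,v\rangle\|_{L_2}=\|y\|$, gives $\textrm{Prob}(E_1)\ge\gamma^2$. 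For $E_2$: Chebyshev's inequality gives $\textrm{Prob}(E_2^c)=\textrm{Prob}(|\langle z,v\rangle|>2\gamma^{-1}\|z\|)\le \|z\|^2/(4\gamma^{-2}\|z\|^2)=\gamma^2/4$. For $E_3$: Markov's inequality applied to $\|v\|^2$ gives $\textrm{Prob}(E_3^c)=\textrm{Prob}(\|v\|>2\gamma^{-1}n^{1/2})\le n/(4\gamma^{-2}n)=\gamma^2/4$.

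Next I would combine these estimates. Since we do not know that $E_1,E_2,E_3$ are independent, I would only use the union bound on complements,
$$\textrm{Prob}(E_1\cap E_2\cap E_3)\ \ge\ \textrm{Prob}(E_1)-\textrm{Prob}(E_2^c)-\textrm{Prob}(E_3^c)\ \ge\ \gamma^2-\frac{\gamma^2}{4}-\frac{\gamma^2}{4}\ =\ \frac{\gamma^2}{2},$$
so that $\textrm{Prob}\big(j\in J^z_{x,y}(a,2\gamma^{-1})\big)\ge\gamma^2/2$ for every $1\le j\le m$. Finally, letting $b_j$ be the indicator of $\{j\in J^z_{x,y}(a,2\gamma^{-1})\}$, the random variables $(b_j)_{j=1}^m$ are iid (since $(v_j)_{j=1}^m$ are iid) and Bernoulli with mean at least $\gamma^2/2$, so Hoeffding's inequality with $\lambda=\gamma^2/4$ yields
$$\textrm{Prob}\Big(|J^z_{x,y}(a,2\gamma^{-1})|\le 4^{-1}\gamma^2m\Big)=\textrm{Prob}\Big(\sum_{j=1}^m b_j\le\big(\tfrac{\gamma^2}{2}-\tfrac{\gamma^2}{4}\big)m\Big)\le e^{-2(\gamma^2/4)^2m}=e^{-8^{-1}\gamma^4m},$$
which is the assertion.

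The computation is routine; the one point that requires a little care — and the reason conditions (ii) and (iii) were set up with the threshold factor $2\gamma^{-1}$ — is that $E_2$ and $E_3$ are not independent of $E_1$, so one cannot multiply the three probabilities. The union-bound step works only because $E_1$ contributes the full $\gamma^2$ while each of $E_2^c$ and $E_3^c$ costs only $\gamma^2/4$, leaving $\gamma^2/2$; this margin is in turn exactly what lets the Hoeffding step close with the stated exponent $8^{-1}\gamma^4 m$.
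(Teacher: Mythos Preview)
Your proof is correct and follows essentially the same approach as the paper's: the paper likewise bounds $\textrm{Prob}(E_1)\ge\gamma^2$ via independence on disjoint coordinate blocks, bounds $\textrm{Prob}(E_2^c)$ and $\textrm{Prob}(E_3^c)$ each by $\gamma^2/4$ via Markov/Chebyshev, subtracts to get $\gamma^2/2$, and finishes with Hoeffding at $\lambda=\gamma^2/4$. Your added commentary on why the union bound (rather than independence) is the right tool for combining $E_1,E_2,E_3$ is accurate and not made explicit in the paper.
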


\begin{proof}

As $x$ and $y$ are supported on disjoint coordinates of $\ell_2^n$ and the coordinates of $v$ are independent, we have that
\begin{equation}\label{E:1e}
\textrm{Prob}\Big(|\langle x,v\rangle |\!\geq\! a\|x\|\textrm{ and }|\langle y,v\rangle|\!\geq\! a \|y\|\Big)\!=\! \textrm{Prob}\Big(|\langle x,v\rangle |\!\geq\! a\|x\|\Big)\textrm{Prob}\Big(|\langle y,v\rangle |\!\geq\! a\|y\|\Big)\!\geq\! \gamma^2
\end{equation}

Let $z=(a_j)_{j=1}^n\in\ell_2^n$.  As  $(y_j)_{j=1}^n$ is an ortho-normal sequence we have that
$$
\E |\langle z,v\rangle|^2=\E \big|\sum_{j=1}^n a_j y_j\big|^2=\sum_{j=1}^n |a_j|^2=\|z\|^2.
$$

Using the above equality with Markov's inequality we have that,
\begin{equation}\label{E:2e}
\textrm{Prob}\Big(|\langle z,v\rangle|> 2\gamma^{-1}\|z\|\Big)=\textrm{Prob}\Big(|\langle z,v\rangle|^2> 4\gamma^{-2}\|z\|^2\Big)\leq (\|z\|^2) (4^{-1}\gamma^{2} \|z\|^{-2})= 4^{-1}\gamma^2.
\end{equation}

As $(y_j)_{j=1}^n$ is an orthonormal sequence  we have that
$\E(\|v\|^2)=\E\sum_{j=1}^n |y_j|^2=n.$
    By using Markov's inequality again, 
\begin{equation}\label{E:3e}
\textrm{Prob}(\|v\|\geq 2 \gamma^{-1} n^{1/2})=\textrm{Prob}(\|v\|^2\geq 4 \gamma^{-2} n)\leq \E(\|v\|^2)4^{-1} \gamma^{2}n^{-1}=4^{-1}\gamma^{2}.
\end{equation}

Thus, for each $1\leq j\leq m$ we have that
\begin{align*}
\textrm{Prob}&\Big(j\in J^z_{x,y}(a,2\gamma^{-1})\Big)\\
&\geq \textrm{Prob}\Big(|\langle x,v_j\rangle|\!\geq\! a\|x\|\textrm{ and }|\langle y, v_j\rangle|\!\geq\! a \|y\|\Big)\!-\!\textrm{Prob}\Big(|\langle z,v_j\rangle|\!>\! 2\gamma^{-1}\|z\|\Big)\!-\!\textrm{Prob}\Big(\|v_j\|\!>\! 2\gamma^{-1}n^{1/2}\Big)\\
&\geq \gamma^2-\gamma^2/4-\gamma^2/4\hspace{2cm}\big(\textrm{by \eqref{E:1e}, \eqref{E:2e}, and \eqref{E:3e} }\big)\\
&=\gamma^2/2
\end{align*}
We now consider a sequence $(b_j)_{j=1}^m$ of iid Bernoulli random variables by setting
 $b_j=1$ if $j\in J^z_{x,y}(a,2\gamma^{-1})$ and $b_j=0$ otherwise.  Each $b_j$ has expectation at least $\gamma^2/2$.         By Hoeffding's inequality we have for all $\lambda>0$ that
$$\textrm{Prob}\Big(|J^z_{x,y}(a,2\gamma^{-1})|\leq (\gamma^2/2-\lambda)m\Big)= \textrm{Prob}\left(\sum_{j=1}^m b_j\leq (\gamma^2/2-\lambda)m\right) \leq e^{-2\lambda^2 m}
$$
We now choose $\lambda=\gamma^2/4$ to obtain our desired result.
\end{proof}

\begin{cor}\label{C:lower_bp}
Let $(y_j)_{j=1}^n$ be an orthonormal sequence of independent random variables in $L_2([0,1])$ such that there exists $a,\gamma>0$ so that $\textrm{Prob}(|x|\geq a\|x\|_{L_2})\geq\gamma$  for all $x\in span(y_j)$.
 Let $m\in\N$ and let $(v_j)_{j=1}^m$ be an independent sequence of random vectors in $\ell_2^n$ each with the same distribution as $v=(y_1,y_2,...,y_n)$.
Then with probability at least $1-\exp\left(\log(288a^{-2} \gamma^{-2}  n)n-8^{-1}\gamma^4 m \right)$, we have that
$$|J^z_{x,y}(2^{-1}a,3\gamma^{-1})|\geq (\gamma^2/4)m,$$
for every $x,y,z\in \ell^n_2\setminus\{0\}$  with  $x\in span(e_j)_{j\in I}$ and $y\in span(e_j)_{j\in I^c}$ for some $I\subseteq[n]$.
\end{cor}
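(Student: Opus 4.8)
The plan is to run the $\vp$-net argument that proved Corollary \ref{C:lower_bp_sG}, but now additionally netting the auxiliary vector $z$ and using conditions (ii) and (iii) in the definition of $J^z_{x,y}(b,s)$ to make the event ``$j$ contributes to $J^z_{x,y}$'' robust under small perturbations of $x$, $y$, and $z$. Set $\vp = a\gamma/(4\sqrt n)$; note $\vp<1$ since Markov's inequality applied to the hypothesis forces $a\gamma\le 1$. For each $I\subseteq[n]$ use Corollary 4.2.13 in \cite{V} to choose $\vp$-dense subsets $Z_{I,\vp}$, $Z_{I^c,\vp}$, $Z_{[n],\vp}$ of the unit spheres of $span(e_j)_{j\in I}$, $span(e_j)_{j\in I^c}$, and $\ell_2^n$ respectively, with $|Z_{I,\vp}|\le(3/\vp)^{|I|}$, $|Z_{I^c,\vp}|\le(3/\vp)^{|I^c|}$, $|Z_{[n],\vp}|\le(3/\vp)^n$. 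Put $D_\vp=\bigcup_{I\subseteq[n]}Z_{I,\vp}\times Z_{I^c,\vp}\times Z_{[n],\vp}$, so that $|D_\vp|\le 2^n(3/\vp)^{2n}=(288\,a^{-2}\gamma^{-2}n)^n$, using $2\cdot 9\cdot 16=288$ and $\vp^{-2}=16n/(a^2\gamma^2)$.

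The probabilistic step is then a union bound over $D_\vp$. For a fixed $(x_0,y_0,z_0)\in D_\vp$ the vectors $x_0$ and $y_0$ have disjoint support, so Lemma \ref{L:lower_bP} gives $\textrm{Prob}(|J^{z_0}_{x_0,y_0}(a,2\gamma^{-1})|\le 4^{-1}\gamma^2 m)\le e^{-8^{-1}\gamma^4 m}$; summing over $D_\vp$ shows that with probability at least $1-|D_\vp|e^{-8^{-1}\gamma^4 m}\ge 1-\exp(\log(288\,a^{-2}\gamma^{-2}n)n-8^{-1}\gamma^4 m)$ every triple $(x_0,y_0,z_0)\in D_\vp$ satisfies $|J^{z_0}_{x_0,y_0}(a,2\gamma^{-1})|\ge 4^{-1}\gamma^2 m$.

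The last step, carried out on that event, is deterministic. Given nonzero $x\in span(e_j)_{j\in I}$, $y\in span(e_j)_{j\in I^c}$, $z\in\ell_2^n$, rescale so that $\|x\|=\|y\|=\|z\|=1$ and pick $(x_0,y_0,z_0)\in D_\vp$ within distance $\vp$ of $(x,y,z)$. I would show $J^{z_0}_{x_0,y_0}(a,2\gamma^{-1})\subseteq J^z_{x,y}(2^{-1}a,3\gamma^{-1})$: if $j$ belongs to the left side then condition (iii) gives $\|v_j\|\le 2\gamma^{-1}\sqrt n$ (this condition is identical for both sets), hence $|\langle x-x_0,v_j\rangle|\le\vp\|v_j\|\le 2\vp\gamma^{-1}\sqrt n=a/2$, so $|\langle x,v_j\rangle|\ge a-a/2=2^{-1}a=2^{-1}a\|x\|$, and likewise $|\langle y,v_j\rangle|\ge 2^{-1}a\|y\|$; moreover $|\langle z-z_0,v_j\rangle|\le a/2\le\gamma^{-1}$ (again $a\gamma\le 1$), so $|\langle z,v_j\rangle|\le 2\gamma^{-1}+\gamma^{-1}=3\gamma^{-1}=3\gamma^{-1}\|z\|$. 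Therefore $|J^z_{x,y}(2^{-1}a,3\gamma^{-1})|\ge|J^{z_0}_{x_0,y_0}(a,2\gamma^{-1})|\ge 4^{-1}\gamma^2 m$, which is the claim.

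The main obstacle is really just the bookkeeping forced by the factor $\sqrt n$ in the bound $\|v_j\|\le 2\gamma^{-1}\sqrt n$: it compels the net radius to scale like $n^{-1/2}$, so $|D_\vp|$ is of order $(cn)^n$ and the union bound costs a term of order $n\log n$ in the exponent, which is precisely why the finite-dimensional result of this section requires $m$ on the order of $n\log n$ rather than $n$ as in Section \ref{S:4}. Checking that the passage from $(a,2\gamma^{-1})$ to $(2^{-1}a,3\gamma^{-1})$ leaves enough slack to absorb the $\vp$-perturbations in all three of conditions (i), (ii), (iii) is the only point needing care, and the inequality $a\gamma\le 1$ is exactly what makes the estimate for condition (ii) close.
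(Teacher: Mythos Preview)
Your proof is correct and follows essentially the same approach as the paper's own argument: the choice $\vp=4^{-1}a\gamma n^{-1/2}$, the triple net $D_\vp=\bigcup_{I\subseteq[n]}Z_{I,\vp}\times Z_{I^c,\vp}\times Z_{[n],\vp}$, the union bound via Lemma~\ref{L:lower_bP}, and the deterministic inclusion $J^{z_0}_{x_0,y_0}(a,2\gamma^{-1})\subseteq J^z_{x,y}(2^{-1}a,3\gamma^{-1})$ established using condition~(iii) to bound $\|v_j\|$ and condition~(ii) together with $a\gamma\le 1$ to close the estimate for the $z$-coordinate. The only difference is notational (you swap the roles of $(x,y,z)$ and $(x_0,y_0,z_0)$ relative to the paper), and your closing remark about why the $\sqrt{n}$ in condition~(iii) forces $\vp\sim n^{-1/2}$ and hence the $n\log n$ cost is a useful addition.
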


\begin{proof}
Let $\vp=4^{-1}a \gamma  n^{-1/2}$.  Note that $0<a\gamma\leq1$ and hence $0<\vp< 1$. For all $I\subseteq [n]$ we may choose a set of unit vectors $Z_I\subseteq span(e_j)_{j\in I}$ which are $\vp$ dense in the unit sphere of $span(e_j)_{j\in I}$ and $|Z_I|\leq (3/\vp)^{|I|}$.   The cardinality of $D:=\cup_{I\subseteq[n]}Z_I\times Z_{I^c}\times Z_{[n]}$ has the following upper bound.
$$|D|\leq \sum_{I\subseteq[n]}  |Z_I| |Z_{I^c}||Z_{[n]}| \leq \sum_{I\subseteq[n]}  \left(\frac{3}{\vp}\right)^{|I|}\left(\frac{3}{\vp}\right)^{|I^c|}\left(\frac{3}{\vp}\right)^{n}=2^n  3^{2n} \vp^{-2n}=18^n \vp^{-2n}.
$$

By using Lemma \ref{L:lower_bP} and a union bound, we have that the probability that 
$|J^z_{x,y}(a,2\gamma^{-1})|\geq (\gamma^2/4)m$ for every $(x,y,z)\in D$ is at least,
\begin{align*}
1-\sum_{(x,y,z)\in D}\textrm{Prob}\Big(|J^z_{x,y}(a,2\gamma^{-1})|< (\gamma^2/4)m\Big)&\geq 1-|D| e^{-8^{-1}\gamma^4 m}\\
&\geq 1-18^n\vp^{-2n} e^{-8^{-1}\gamma^4 m}\\
&=1-\exp\left((\log(18)-\log(\vp^2))n-8^{-1}\gamma^4 m \right)\\
&=1-\exp\left((\log(18)+\log(16a^{-2} \gamma^{-2}  n)n-8^{-1}\gamma^4 m \right)\\
&=1-\exp\left(\log(288a^{-2}\gamma^{-2}n)n-8^{-1}\gamma^4 m \right)
\end{align*}

We now assume that $|J^z_{x,y}(a,2\gamma^{-1})|\geq (\gamma^2/4)m$ for all $(x,y,z)\in D$.  Let  $x_0,y_0,z_0\in \ell^n_2\setminus\{0\}$ with $x_0\in span(e_j)_{j\in I}$ and $y_0\in span(e_j)_{j\in I^c}$ for some $I\subseteq [n]$.  Choose some $(x,y,z)\in D$ with $\|x_0/\|x_0\|-x\|<\vp$, $\|y_0/\|y_0\|-y\|<\vp$, and $\|z_0/\|z_0\|-z\|<\vp$.      Let $j\in J^{z}_{x,y}(a,2\gamma^{-1})$.  We will prove that $j\in J^{z_0}_{x_0,y_0}(a/2,3\gamma^{-1})$ and hence $J^{z}_{x,y}(a,2\gamma^{-1})\subseteq J^{z_0}_{x_0,y_0}(a/2,3\gamma^{-1})$.
We have that 
\begin{align*}
|\langle x_0, v_j\rangle|/\|x_0\|&\geq |\langle x, v_j\rangle| - |\langle x_0/\|x_0\|-x,v_j\rangle|\\
& \geq |\langle x,v_j\rangle|-\|v_j\|\big\|x_0/\|x_0 \|-x\big\|\\
&\geq a-2 \gamma^{-1}  n^{1/2} \vp\\
&=a/2\hspace{2cm}\textrm{( as $\vp=4^{-1}a \gamma  n^{-1/2}$ .)}
\end{align*}
Thus, $|\langle x_0, v_j\rangle|\geq (a/2)\|x_0\|$ and likewise $|\langle y_0, v_j\rangle|\geq (a/2)\|y_0\|$.  This proves that property (i) is satisfied for  $J^{z_0}_{x_0,y_0}(a/2,3\gamma^{-1})$. Note that $\textrm{Prob}(|x|\geq a\|x\|_{L_2})\geq\gamma$ for all $x\in \ell_2^n$ implies that $a\gamma\leq1$. We now check property (ii) using similar inequalities.
\begin{align*}
|\langle z_0,v_j\rangle|/\|z_0\|&\leq |\langle z, v_j\rangle| +|\langle z_0/\|z_0\|-z,v_j\rangle|\\
&\leq |\langle z,v_j\rangle|+\|v_j\|\big\|z_0/\|z_0 \|-z\big\|\\
&\leq 2\gamma^{-1}+2\gamma^{-1}  n^{1/2} \vp\\
&=2\gamma^{-1}+a/2\hspace{2cm}\textrm{( as $\vp=4^{-1}a \gamma  n^{-1/2}$ )}\\
&< 3\gamma^{-1}\hspace{3cm}\textrm{( as $a\gamma\leq 1$.)}
\end{align*}
Thus, property (ii) is satisfied for $ J^{z_0}_{x_0,y_0}(a/2,3\gamma^{-1})$.  Property (iii) is the same for both $J^{z_0}_{x_0,y_0}(a/2,3\gamma^{-1})$ and $J^{z_0}_{x_0,y_0}(a,2\gamma^{-1})$, hence all three properties are satisfied and $j\in J^{z_0}_{x_0,y_0}(a/2,3\gamma^{-1})$.  This proves that $J^{z}_{x,y}(a,2\gamma^{-1})\subseteq J^{z_0}_{x_0,y_0}(a/2,3\gamma^{-1})$.  Hence, $|J^{z_0}_{x_0,y_0}(a/2,3\gamma^{-1})|\geq (\gamma^2/4)m$.

\end{proof}

We now state and prove the main theorem for this section.

\begin{thm}\label{T:main_P}
Let $a,\gamma>0$.  Then there exists constants $k_1,k_2>0$ which depend only on the values $a$ and $\gamma$ such that for all $n\in\N$ and $m\geq k_1 n\log(n)$ the following holds.
Suppose that $(y_j)_{j=1}^n$ is a sequence of  mean-zero, variance-one, independent random variables such that  $\textrm{Prob}(|x|\geq a\|x\|_{L_2})\geq\gamma$ for all $x\in span(y_j)$.
Let $(v_j)_{j=1}^m$ be an independent sequence of random vectors in $\ell_2^n$ each with the same distribution as $v=(y_1,y_2,...,y_n)$. Then with probability at least $1-\exp\left(-k_2 m \right)$ the frame $(\frac{1}{\sqrt{m}}v_j)_{j=1}^m\cup (e_i)_{i=1}^n$ does $(12\gamma^{-1}a^{-1}+1)$-stable phase retrieval.  That is, if $T:\ell_2^n\rightarrow\ell_2^{m+n}$ is the analysis operator for the frame $(\frac{1}{\sqrt{m}}v_j)_{j=1}^m\cup (e_i)_{i=1}^n$ then for all $f,g\in\ell_2^{n}$ we have that 

$$
min(\| f-g\|_{\ell_2^n},\| f+g\|_{\ell_2^n}) \leq(12a^{-1}\gamma^{-1} +1)
\| |Tf|-|Tg| \|_{\ell_2^{m+n}}. 
$$

\end{thm}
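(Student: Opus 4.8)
The plan is to run the argument of Theorem~\ref{T:main_P_sG} almost verbatim, replacing the two places that used the upper frame bound $2$ for $(m^{-1/2}v_j)_{j=1}^m$ (which there was guaranteed by Corollary~\ref{C:frame_sG}) by the perturbation-stable good-set estimate of Corollary~\ref{C:lower_bp}. First I would fix the constants: since $a\gamma\le1$, for $n\ge2$ one has $\log(288a^{-2}\gamma^{-2}n)\,n=\big(\log(288a^{-2}\gamma^{-2})+\log n\big)n\le c_0\,n\log n$ with $c_0=c_0(a,\gamma)$, so taking $k_2=\gamma^4/16$ and $k_1$ large enough (depending only on $a,\gamma$, and also large enough that $m\ge k_1 n\log n$ forces $\lceil\gamma^2m/4\rceil\le\tfrac49\gamma^2m$ and $\gamma^2m/4\ge1$) converts the bound of Corollary~\ref{C:lower_bp} into: for $m\ge k_1n\log n$, with probability at least $1-\exp(-k_2m)$ the event $\mathcal E$ holds that $|J^z_{x,y}(a/2,3\gamma^{-1})|\ge\gamma^2m/4$ for all $x,y,z\in\ell_2^n\setminus\{0\}$ with $x\in\spa(e_j)_{j\in I}$ and $y\in\spa(e_j)_{j\in I^c}$ for some $I\subseteq[n]$. (The case $n=1$ is trivial.) Assume $\mathcal E$ henceforth.

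Fix $f=\sum a_ie_i$ and $g=\sum b_ie_i$ in $\ell_2^n$, set $\vp_i=\sign(a_ib_i)$, $\delta_i=b_i-\vp_ia_i$, $I=\{i:\vp_i=1\}$, $x=\sum_{i\in I}a_ie_i$, $y=\sum_{i\in I^c}a_ie_i$ and $w=\sum_i\delta_ie_i$. Then $|\delta_i|=\big||a_i|-|b_i|\big|$, so $(\sum_i\delta_i^2)^{1/2}=\big\|(|\langle f,e_i\rangle|-|\langle g,e_i\rangle|)_{i=1}^n\big\|\le\||Tf|-|Tg|\|$; moreover $f=x+y$, $x-y=\sum\vp_ia_ie_i$, $g=(x-y)+w$, whence $\|f-g\|=\|2y-w\|\le 2\|y\|+(\sum_i\delta_i^2)^{1/2}$. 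Without loss of generality $\|x\|\ge\|y\|$ (otherwise bound $\|f+g\|$ by the symmetric argument). Apply $\mathcal E$ with $z=w/\|w\|$ (any fixed nonzero $z$ if $w=0$) and choose a subset $J'\subseteq J^z_{x,y}(a/2,3\gamma^{-1})$ with $|J'|=\lceil\gamma^2m/4\rceil$, so that $\gamma^2m/4\le|J'|\le\tfrac49\gamma^2m$.

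Now I would prove the two restricted estimates. The analogue of Lemma~\ref{L:lower_b_sG}: for $j\in J'$ one has $\min(|\langle x,v_j\rangle|,|\langle y,v_j\rangle|)\ge(a/2)\min(\|x\|,\|y\|)=(a/2)\|y\|$, so using $\big||\alpha+\beta|-|\alpha-\beta|\big|=2\min(|\alpha|,|\beta|)$ and $|J'|\ge\gamma^2m/4$,
\[
\tfrac{a\gamma}{2}\,\|y\|\ \le\ \Big\|\big(|\langle x+y,m^{-1/2}v_j\rangle|-|\langle x-y,m^{-1/2}v_j\rangle|\big)_{j\in J'}\Big\|.
\]
The analogue of Lemma~\ref{L:signs_sG}: for $j\in J'$, property~(ii) of the good set gives $|\langle w,v_j\rangle|\le3\gamma^{-1}\|w\|$, so the choice $|J'|\le\tfrac49\gamma^2m$ yields $\big\|(\langle w,m^{-1/2}v_j\rangle)_{j\in J'}\big\|^2=m^{-1}\sum_{j\in J'}|\langle w,v_j\rangle|^2\le m^{-1}|J'|\,9\gamma^{-2}\|w\|^2\le4\|w\|^2$, and combining this with the coordinatewise reverse triangle inequality for $g=(x-y)+w$ gives
\[
\Big\|\big(|\langle x+y,m^{-1/2}v_j\rangle|-|\langle x-y,m^{-1/2}v_j\rangle|\big)_{j\in J'}\Big\|\ \le\ \||Tf|-|Tg|\|+2\big(\textstyle\sum_i\delta_i^2\big)^{1/2}\ \le\ 3\,\||Tf|-|Tg|\|.
\]
Chaining the two displays gives $\tfrac{a\gamma}{2}\|y\|\le3\||Tf|-|Tg|\|$, i.e. $2\|y\|\le12a^{-1}\gamma^{-1}\||Tf|-|Tg|\|$, and therefore $\|f-g\|\le 2\|y\|+(\sum_i\delta_i^2)^{1/2}\le(12a^{-1}\gamma^{-1}+1)\||Tf|-|Tg|\|$, as required.

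The one genuinely new point — and the main obstacle — is the estimate of $\big\|(\langle w,m^{-1/2}v_j\rangle)_{j\in J'}\big\|$. Without the sub-Gaussian hypothesis there is no uniform upper frame bound for $(m^{-1/2}v_j)_{j=1}^m$, so Lemma~\ref{L:signs_sG}'s clean bound $\sqrt2\,\|w\|$ is unavailable, and property~(ii) of the good set supplies only the weak pointwise estimate $|\langle w,v_j\rangle|\le3\gamma^{-1}\|w\|$, whose square carries a $\gamma^{-2}$. The device that rescues the order of the constant is to pass to a subset $J'$ of size \emph{exactly} $\sim\gamma^2m/4$: then the factor $|J'|/m\sim\gamma^2/4$ precisely cancels the $\gamma^{-2}$, leaving an $O(1)$ loss. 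This is also the structural reason for the $n\log n$ (rather than $n$) sample complexity: to make property~(i) stable under perturbations one needs the net in Corollary~\ref{C:lower_bp} to have mesh of order $n^{-1/2}$ — controlled against $\|v_j\|\le2\gamma^{-1}n^{1/2}$ — and such a net on the sphere of $\ell_2^k$ has $\sim(\mathrm{const}\cdot\sqrt n)^k$ points, contributing the extra $\log n$ to the exponent. Everything else is a routine transcription of the infinite-dimensional argument of Theorem~\ref{T:main2}.
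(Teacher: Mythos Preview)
Your proof is correct and follows the paper's overall architecture: you invoke Corollary~\ref{C:lower_bp} for the high-probability event, set up the identical decomposition $f=x+y$, $g=(x-y)+w$ with $w=\sum\delta_i e_i$, and reduce to bounding $\|y\|$ in terms of $\||Tf|-|Tg|\|$.

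The one genuine difference is in how the perturbation term $\|(\langle w,m^{-1/2}v_j\rangle)_{j}\|$ is controlled. The paper does \emph{not} pass to a subset $J'$ of prescribed size; instead it works on the full good set $J=J^z_{x,y}(a/2,3\gamma^{-1})$ and splits into two cases. If $\|y\|<6\gamma^{-1}a^{-1}\|z\|$ the bound $\|f-g\|\le2\|y\|+\|z\|\le(12a^{-1}\gamma^{-1}+1)\|z\|$ is immediate from the $(e_i)$-block alone. If $\|y\|\ge6\gamma^{-1}a^{-1}\|z\|$ the paper factors out $|J|^{1/2}m^{-1/2}$ from both the main term and the perturbation term, obtaining
\[
|J|^{1/2}m^{-1/2}\big(a\|y\|-3\gamma^{-1}\|z\|\big)\ \ge\ |J|^{1/2}m^{-1/2}\cdot\tfrac{a}{2}\|y\|\ \ge\ \tfrac{a\gamma}{4}\|y\|,
\]
so that no upper bound on $|J|$ is ever needed. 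Your device---shrinking to $J'$ with $|J'|\approx\gamma^2m/4$ so that the factor $|J'|/m$ exactly cancels the $9\gamma^{-2}$ from property~(ii)---is a neat alternative that avoids the case split entirely and lands on the same constant $12a^{-1}\gamma^{-1}+1$. Both arguments are equally short; yours is slightly more uniform, the paper's avoids the mild fuss with $\lceil\gamma^2m/4\rceil\le\tfrac49\gamma^2m$. One small omission: you should note explicitly that the edge case $y=0$ (or $x=0$) is trivial, since Corollary~\ref{C:lower_bp} is stated only for $x,y,z\neq0$; the paper's argument has the same harmless lacuna.
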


\begin{proof}
By Corollary \ref{C:lower_bp}, we have with probability at least  $1-\exp\left(\log(288a^2 \gamma^{-2}  n)n-8^{-1}\gamma^4 m \right)$ that
\begin{equation}\label{E:lower_bP}
|J^z_{x,y}(a/2,3\gamma^{-1})|\geq (\gamma^2/4)m,
\end{equation}
for every $x,y,z\in \ell_2^n\setminus\{0\}$  with  $x\in span(y_j)_{j\in I}$ and $y\in span(y_j)_{j\in I^c}$ for some $I\subseteq[n]$.  Thus, we may choose $k_1,k_2>0$ to depend only on $a$ and $\gamma$ so that if $m\geq k_1 n\log(n)$ then \eqref{E:lower_bP} holds with probability at least $1-\exp\left(-k_2 m \right)$.
We now assume that \eqref{E:lower_bP} holds and will prove that the frame $(\frac{1}{\sqrt{m}}v_j)_{j=1}^m\cup (e_i)_{i=1}^n$ does $(12\gamma^{-1}a^{-1}+1)$-stable phase retrieval.

Let $f,g\in \ell_2^n$ with $f=\sum_{j=1}^n a_j e_j$ and $g=\sum_{j=1}^n b_j e_j$.   For all $1\leq j\leq n$ we let $\vp_j= sign(a_jb_j)$ and $\delta_j= b_j  - \vp_j a_j$.  
We let $I=\{j\in[n]:\, \vp_j=1\}$.  Let $x=\sum_{j\in I}a_j e_j$ and $y=\sum_{j\in I^c} a_j e_j$.  We have that $x+y=\sum_{j=1}^n a_j e_j$ and $x-y=\sum_{j=1}^n \vp_j a_j e_j$. Let $z=\sum_{j=1}^n \delta_j e_j$ and note that $|\delta_j|=||a_j|-|b_j||$.

    Without loss of generality, we may assume that $\|x\|\geq \|y\|$.  Let $T:\ell_2^n\rightarrow\ell_2^{m+n}$ be the analysis operator for the frame $(\frac{1}{\sqrt{m}}v_j)_{j=1}^m\cup (e_i)_{i=1}^n$
    We first consider the case that $\|y\|< 6\gamma^{-1}a^{-1}\|z\|$.  
   As $(\frac{1}{\sqrt{m}}v_j)_{j=1}^m\cup (e_i)_{i=1}^n$ includes the unit vector basis, we have that,
$$
 \big\| |Tf|-|Tg| \big\|^2_{\ell^{m+n}_2}\geq \sum_{j=1}^n \big||a_j|-|b_j|\big|=\|z\|^2 
$$   
We now estimate $\|f-g\|$ by
\begin{align*}
    \|f-g\|&=\|(x+y)-(x-y+z)\|\\
    &\leq 2\|y\|+\|z\|\\
    &\leq (12 \gamma^{-1} a^{-1}+1)\|z\|\hspace{2cm}\left(\textrm{as $\|y\|< 6\gamma^{-1}a^{-1}\|z\|$}\right)\\
    &\leq (12 \gamma^{-1} a^{-1}+1)\big\| |Tf|-|Tg| \big\|_{\ell^{m+n}_2}
\end{align*}

    Hence,  in the case that $\|y\|< 6\gamma^{-1}a^{-1}\|z\|$ we have that $\|f-g\|\leq (12 \gamma^{-1} a^{-1}+1)\| |Tf|-|Tg| \|_{\ell^{m+n}_2}$.  We now assume that $\|y\|\geq 6\gamma^{-1}a^{-1}\|z\|$.

    Let $T:\ell_2^n\rightarrow\ell_2^{m+n}$ be the analysis operator for the frame $(\frac{1}{\sqrt{m}}v_j)_{j=1}^m\cup (e_i)_{i=1}^n$, and let $T_0:\ell_2^n\rightarrow\ell_2(J_{x,y}^z(\frac{a}{2},\frac{3}{\gamma}))$ be the analysis operator for $(\frac{1}{\sqrt{m}}v_j)_{j\in J_{x,y}^z(\frac{a}{2},\frac{3}{\gamma})}$.  We now estimate a lower bound on $\| |Tf|-|Tg| \|_{\ell^{m+n}_2}$.

\begin{align*}
\sqrt{2}\| |Tf|&-|Tg| \|_{\ell^{m+n}_2}\geq  \left\| |T_0(x+y)| - |T_0(x-y+z)|\right\|_{\ell_2(J_{x,y}^z(\frac{a}{2},\frac{3}{\gamma}))}+\|z\|\\
&\geq \left\| |T_0(x+y)| - |T_0(x-y)|\right\|_{\ell_2(J_{x,y}^z(\frac{a}{2},\frac{3}{\gamma}))}-\left\| T_0(z)\right\|_{\ell_2(J_{x,y}^z(\frac{a}{2},\frac{3}{\gamma}))}+\|z\|\\
&=\Big(\sum_{j\in J_{x,y}^z(\frac{a}{2},\frac{3}{\gamma})}m^{-1}\big(|\langle x+y, v_j\rangle| - |\langle x-y,v_j\rangle|\big)^2\Big)^{1/2} -\Big(\sum_{j\in J_{x,y}^z(\frac{a}{2},\frac{3}{\gamma})}m^{-1}|\langle z,v_j\rangle|^2\Big)^{1/2}+\|z\|\\
&=\Big(\sum_{j\in J_{x,y}^z(\frac{a}{2},\frac{3}{\gamma})} 4m^{-1}\min(|\langle x, v_j\rangle|^2, |\langle y,v_j\rangle|^2)\Big)^{1/2}- \Big(\sum_{j\in J_{x,y}^z(\frac{a}{2},\frac{3}{\gamma})} m^{-1}|\langle  z,v_j\rangle|^2\Big)^{1/2}+\|z\|\\
&\geq |{J_{x,y}^z(\tfrac{a}{2},\tfrac{3}{\gamma})}|^{1/2}m^{-1/2}a \|y\|-|{J_{x,y}^z(\tfrac{a}{2},\tfrac{3}{\gamma})}|^{1/2} m^{-1/2} 3\gamma^{-1}\|z\|+\|z\|\\
&\geq |{J_{x,y}^z(\tfrac{a}{2},\tfrac{3}{\gamma})}|^{1/2}2^{-1}m^{-1/2}a \|y\|+\|z\|\hspace{1.5cm}\left(\textrm{as $\|y\|\geq 6\gamma^{-1}a^{-1}\|z\|$}\right)\\
&\geq 4^{-1}\gamma a\|y\|+\|z\|\hspace{1.5cm}\left(\textrm{ by \eqref{E:lower_bP}}\right)\\
&\geq 8^{-1}\gamma a \|(x+y)-(x-y+z)\|\\
&=8^{-1}\gamma a \|f-g\|
\end{align*}

Thus, in the case that $\|y\|\geq 6\gamma^{-1}a^{-1}\|z\|$ we have that $\|f-g\|\leq 8\sqrt{2}\gamma^{-1}a^{-1}\||Tf|-|Tg|\|$.  After comparing this with the other case, we have in general that $\|f-g\|\leq (12\gamma^{-1}a^{-1}+1)\||Tf|-|Tg|\|$.
Thus, the frame $(\frac{1}{\sqrt{m}}v_j)_{j=1}^m\cup (e_i)_{i=1}^n$ of $\ell_2^n$ does $(12\gamma^{-1}a^{-1}+1)$-stable phase retrieval.

\end{proof}



\begin{thebibliography}{CDOSZ}


\bibitem[ADGY]{ADGY}
R. Alaifari, I. Daubechies, P. Grohs, and R. Yin, {\em
Stable phase retrieval in infinite dimensions},
Found. of Comp. Math., {\bf 19}, no. 4, (2019), 869-900.\\


\bibitem[AG]{AG}
R. Alaifari and P. Grohs, {\em Phase retrieval in the general setting of continuous frames for banach spaces}, SIAM Journal on Mathematical Analysis, {\bf 49} (3) (2017), 1895–1911.\\

\bibitem[B]{B}R. Balan, {\em Stability of frames which give phase retrieval},
Houston journal of mathematics, {\bf 43} (3) (2017), 905-918.\\

\bibitem[BDDW]{BDDW}  R. Baraniuk, M. Davenport, R. Devore, and M. Waken, {\em A simple proof of the restricted isometry property for random matrices}, Constr. Approx., {\bf 28} (2008), 253-263.\\

 \bibitem[BR]{BR} C. Becchetti and L. P. Ricotti. Speech recognition theory and C++ implementation. Wiley (1999).\\

\bibitem[CCD]{CCD}
J. Cahill, P.G. Casazza, and I. Daubechies, {\em Phase retrieval in infinite-dimensional Hilbert spaces}, Transactions of the AMS, Series B, {\bf 3} (2016), 63-76.\\

\bibitem[CCPW]{CCPW}
J. Cahill, P.G. Casazza, J. Peterson, and L. Woodland {\em Using projections for phase retrieval}, Proc. SPIE 8858, Wavelets and Sparsity XV, 88581W (26 September 2013)\\



\bibitem[CSV]{CSV}
E. J. Cand\`es, T. Strohmer, and V. Voroninski {\em  Phaselift: Exact and stable signal recovery from magnitude measurements via convex programming}, Comm. Pure Appl. Math., {\bf 66} (8) (2013),  1241-1274.\\



\bibitem[CL]{CL}
E.J. Candès, X. Li, {\em Solving quadratic equations via PhaseLift when there are about as many equations as unknowns }, Found Comput Math  {\bf 14} (5) (2014), 1017-1026.\\

\bibitem[DH]{DH}
L. Demanet, P. Hand, {\em Stable optimizationless recovery from phaseless linear measurements}, J Fourier Anal Appl, {\bf 20} (1) (2014), 199-221.\\

\bibitem[EM]{EM}
Y. C. Eldar and S. Mendelson, {\em Phase retrieval: stability and recovery guarantees}, Appl. Comput. Harmonic Analysis, {\bf 36} (3), (2013), 473-494.\\

\bibitem[FS]{FS} D. Freeman and D. Speegle, {\em The discretization problem for continuous frames}, Advances in Math., {\bf 345} (2019), 784-813.\\

\bibitem[GKK]{GKK} D. Gross, F. Krahmer, and R. Kueng, {\em A partial derandomization of PhaseLift using spherical designs}, J. of Fourier Anal. and Appl. {\bf 21}, (2015), 229–266.\\

\bibitem[K]{K} E. Kirkland, Advanced computing in Electron Microscopy. Springer. ISBN 978-0-306-45936-8 (1998).

\bibitem[KL]{KL}
 F. Krahmer and Y. Liu, {\em Phase Retrieval Without Small-Ball Probability Assumptions}, Information Theory IEEE Transactions on, {\bf 64}, no. 1 (2018), 485-500.\\

\bibitem[KS]{KS}
F. Krahmer and Dominik Stöger, {\em Complex phase retrieval from subgaussian measurements}, J. Four. Anal. and App., {\bf 26}, Article: 89 (2020).\\


\bibitem[LV]{LV} Y. Lyubich and L. Vaserstein, {\em Isometric embeddings between classical Banach spaces, cubature formulas, and spherical designs}, Geometriae Dedicata, {bf 47} (3), 327-362.\\

\bibitem[MSS]{MSS} A. W. Marcus, D. A. Spielman, and N. Srivastava, {\em Interlacing Families II: mixed characteristic polynomials and the Kadison-Singer problem}, Ann. of Math. {\bf 182} (2015), no. 1, 327-350.\\

\bibitem[MCKS]{MCKS} J. Miao, P. Charalambous, J. Kirz, and D. Sayre, {\em Extending the methodology of x-ray crystallography to allow imaging of micromere-sized non-crystalline specimens}, Nature. {\bf 400} (1999)\\



\bibitem[R]{R} M. Rudelson, {\em Random vectors in the isotropic position}, J. Funct. Anal., {\bf 164} (1) (1999), 60-72.\\

\bibitem[T]{T}G. Taylor,  {\em The phase problem}. Acta Crystallographica Section D. 59 (11): 1881–1890 (2003).


\bibitem[V]{V}
R. Vershynin, {\em High-Dimensional Probability: An Introduction with Applications in Data Science}, \href{url}{http:// www.math.uci.edu/~rvershyn/papers/HDP-book/HDP-book.pdf}.\\

\end{thebibliography}
\end{document}